\numberwithin{equation}{section}
\newtheorem{theorem}{Theorem}[section]{\bf}{\it}
\newtheorem{proposition}[theorem]{Proposition}{\bf}{\it}
\newtheorem{corollary}[theorem]{Corollary}{\bf}{\it}
\newtheorem{lemma}[theorem]{Lemma}{\bf}{\it}
{\bf}{\it}
{\bf}{\it}
{\bf}{\it}
\newcounter{theoremintrocnt}
\newtheorem{introtheorem}{Theorem}{\bf}{\it}
\theoremstyle{definition}
\newtheorem{definition}[theorem]{Definition}
\newtheorem{remark}[theorem]{Remark}
\newtheorem{example}[theorem]{Example}
\newcommand{\quadtext}[1]{\quad \text{#1} \quad}
\title{The uniform Roe algebra of an inverse semigroup}
\author[Fernando Lled\'{o}]{Fernando Lled\'{o}$^{1}$}
\address{Department of Mathematics, University Carlos~III Madrid,
  Avda.~de la Universidad~30, 28911 Legan\'{e}s (Madrid), Spain
  and Instituto de Ciencias Matem\'{a}ticas (CSIC - UAM - UC3M - UCM).}
\email{flledo@math.uc3m.es}
\author[Diego Mart\'{i}nez]{Diego Mart\'{i}nez$^{2}$}
\address{Department of Mathematics, University Carlos~III Madrid,
  Avda.~de la Universidad~30, 28911 Legan\'{e}s (Madrid), Spain
  and Instituto de Ciencias Matem\'{a}ticas (CSIC - UAM - UC3M - UCM).}
\email{lumartin@math.uc3m.es}
\date{\today}
\thanks{{$^{1}$} Supported by research projects MTM2017-84098-P and Severo Ochoa SEV-2015-0554 of the Spanish Ministry of Economy and Competition
(MINECO), Spain.}
\thanks{{$^{2}$} Supported by research projects MTM2017-84098-P, Severo Ochoa SEV-2015-0554 and BES-2016-077968 of the Spanish Ministry of Economy and Competition
(MINECO), Spain.}
\keywords{Inverse semigroup, Sch\"{u}tzenberger graph, Uniform Roe Algebra, Property A}
\subjclass[2010]{20M18, 46L05, 51F30, 20F65}
\begin{document}

  \begin{abstract}
    Given a discrete and countable inverse semigroup $S$ one can study, in analogy to the group case, its geometric aspects. In particular, we can equip $S$ with a natural metric, given by the path metric in the disjoint union of its Sch\"{u}tzenberger graphs. This graph, which we denote by $\Lambda_S$, inherits much of the structure of $S$. In this article  we compare the C*-algebra $\mathcal{R}_S$, generated by the left regular representation of $S$ on $\ell^2(S)$ and $\ell^\infty(S)$, with the uniform Roe algebra over the metric space, namely $C^*_u(\Lambda_S)$. This yields a chacterization of when $\mathcal{R}_S = C^*_u(\Lambda_S)$, which generalizes finite generation of $S$. We have termed this by \textit{admitting a finite labeling} (or being FL), since it holds when $\Lambda_S$ can be labeled in a finitary manner.

    The graph $\Lambda_S$, and the FL condition, also allow to analyze large scale properties of $\Lambda_S$ and relate them with C*-properties of the uniform Roe algebra. In particular, we show that domain measurability of $S$ (a notion generalizing Day's definition of amenability of a semigroup, cf., \cite{ALM19}) is a quasi-isometric invariant of $\Lambda_S$. Moreover, we characterize property A of $\Lambda_S$ (or of its components) in terms of the nuclearity and exactness of the corresponding C*-algebras. We also treat the special classes of F-inverse and E-unitary inverse semigroups from this large scale point of view.
  \end{abstract}
  \maketitle
  \begin{center}
    \textit{Dedicated to Pere Ara on the occasion of his 60$\;^{th}$ birthday.}
  \end{center}

  \section{Introduction}\label{sec_intro}
  Given a discrete and finitely generated group $G$ there are several C*-algebras naturally associated to it. Among them, one of the most relevant ones is the so-called \textit{uniform Roe algebra of $G$}, which can be constructed in, at least, the following ways (see, e.g., \cite{R03,BO08}):
  \begin{itemize}
    \item Let $\lambda \colon G \rightarrow \mathcal{B}(\ell^2(G))$ be the left regular representation of $G$ and consider $\ell^{\infty}(G)\subset\mathcal{B}(\ell^2(G))$ as diagonal operators. The algebra $\mathcal{R}_G$ is then the C$^*$-algebra generated by these operators, i.e.,
    \begin{equation}
      \mathcal{R}_G := C^*\big(\ell^{\infty}\left(G\right) \cdot \left\{\lambda_g \mid g \in G\right\}\big) \subset \mathcal{B}\left(\ell^2\left(G\right)\right). \nonumber
    \end{equation}
     Similarly, we can consider the natural action of $G$ on $\ell^{\infty}(G)$ given by left translation of the argument and construct the reduced crossed product
     $\ell^{\infty}(G) \rtimes_r G\subset \mathcal{B}(\ell^2(G))$.
    \item Given a finite, symmetric and generating set $K \subset G$, one may consider the (left) Cayley graph of $G$ with respect to $K$, which we denote by $\Lambda(G, K)$. Then $\Lambda(G, K)$ with the path distance is a metric space of \textit{bounded geometry} (see~\cite[Section~1.2]{NY12}) and, thus, one can construct the corresponding uniform Roe algebra $C_u^*(\Lambda(G, K))$, i.e., the C*-algebra generated by \textit{finite propagation} operators in $\mathcal{B}(\ell^2(G))$ (see Section~\ref{sec_mtrspc}). This C*-algebra is, in particular, generated by partial isometries representing partial bijections of the metric space $t \colon A \rightarrow B$, where $A, B \subset G$ and
    \begin{equation}\label{eq_prttrn_gr}
      \sup \left\{ d\left(a, t\left(a\right)\right) \, \mid \, a \in A \right\} < \infty.
    \end{equation}
  \end{itemize}
  The following well known theorem states that the approaches above generate the same C*-algebra (cf., \cite[Propositon~5.1.3]{BO08}).
  \begin{theorem}\label{gr_thm}
    Let $G$ be a countable and discrete group $G$. Then
    \begin{equation}
      \ell^{\infty}\left(G\right) \rtimes_r G = \mathcal{R}_G = C_u^*\left(\Lambda\left(G, K\right)\right). \nonumber
    \end{equation}
  \end{theorem}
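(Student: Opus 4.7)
The plan is to prove the two equalities separately. The first, $\ell^\infty(G)\rtimes_r G = \mathcal{R}_G$, is essentially by definition: in the standard covariant representation on $\ell^2(G)$, the reduced crossed product is generated by the diagonal action of $\ell^\infty(G)$ together with the unitaries $\{\lambda_g\}_{g\in G}$ implementing translation, which is exactly how $\mathcal{R}_G$ is defined.

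For the second equality I would prove both inclusions. For $\mathcal{R}_G \subseteq C_u^*(\Lambda(G,K))$, observe that every diagonal operator $f \in \ell^\infty(G)$ has propagation zero, while each $\lambda_g$ has propagation exactly the word length $|g|_K$ with respect to the path metric on $\Lambda(G,K)$. Hence all generators of $\mathcal{R}_G$ are finite-propagation operators, and so $\mathcal{R}_G$ lies in the uniform Roe algebra.

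The nontrivial inclusion $C_u^*(\Lambda(G,K)) \subseteq \mathcal{R}_G$ reduces, by norm-density of finite-propagation operators in the uniform Roe algebra, to showing that every $T\in \mathcal{B}(\ell^2(G))$ with propagation at most $R$ lies in $\mathcal{R}_G$. The idea is to decompose $T$ along the group-translation diagonals $\{(k,h) : kh^{-1} = g\}$: for each $g \in G$ with $|g|_K \leq R$, define $f_g \in \ell^\infty(G)$ by $f_g(h) := \langle T\delta_{g^{-1}h}, \delta_h\rangle$, so that $\|f_g\|_\infty \leq \|T\|$. A direct matrix-entry computation then produces the finite sum
\[
T = \sum_{\substack{g \in G \\ |g|_K \leq R}} f_g\, \lambda_g,
\]
exhibiting $T$ as an element of the $*$-algebra generated by $\ell^\infty(G) \cdot \{\lambda_g : g\in G\}$, and thus in $\mathcal{R}_G$.

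The only real obstacle is bookkeeping around conventions: one must choose left versus right Cayley graph, and correspondingly left-invariant versus right-invariant metric, so that finite propagation corresponds precisely to having nonzero matrix entries along finitely many of the diagonals indexed by $\{g : |g|_K \leq R\}$, and so that the partial isometries implementing partial bijections satisfying \eqref{eq_prttrn_gr} fit naturally into the decomposition above. Once the convention is fixed, the argument is entirely structural.
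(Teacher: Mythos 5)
Your handling of the second equality is correct and is the standard argument: since the left Cayley graph metric satisfies $d_K(x,y)=|yx^{-1}|_K$, an operator $T$ of propagation at most $R$ has nonzero matrix entries only on the diagonals indexed by the finitely many $g$ with $|g|_K\leq R$ (finiteness uses that $K$ is finite, which is the standing hypothesis behind $\Lambda(G,K)$), and the identity $T=\sum_{|g|_K\leq R} f_g\lambda_g$ with $f_g(h)=\langle T\delta_{g^{-1}h},\delta_h\rangle$ checks out entrywise. This is precisely the proof of the reference the paper cites for this theorem ({\cite[Proposition~5.1.3]{BO08}}), and it is also the same decomposition scheme the paper itself uses later in the generalized setting (the finite sum $T=\sum_{s\in F}\xi_s V_s$ in the proof of Theorem~\ref{thm_mtrspc}), so on this point you and the paper are aligned.

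The one genuine soft spot is your first equality, which you dismiss as ``essentially by definition.'' It is not: the reduced crossed product $\ell^{\infty}(G)\rtimes_r G$ is defined through the induced regular representation on $\ell^2(G)\otimes\ell^2(G)$, whereas $\mathcal{R}_G$ is generated by a covariant pair acting on $\ell^2(G)$ alone. What needs proving is that this covariant representation is \emph{faithful} on the reduced crossed product (equivalently, that it is unitarily equivalent, after tensoring, to the regular one). The standard fix is a Fell-absorption type argument: conjugate by the unitary $\delta_x\otimes\delta_y\mapsto\delta_x\otimes\delta_{yx}$ and check it intertwines the two pictures; this is exactly the content of the theorem in Section~\ref{sec_crossed} of the paper (where, for inverse semigroups, the intertwiner $W$ is only a partial isometry). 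So your overall plan is right, but to be complete you should either run that conjugation argument or invoke faithfulness of the canonical conditional expectation onto $\ell^{\infty}(G)$ implemented by compression to the diagonal, rather than treating the equality as definitional.
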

  Each description above has its advantages. For instance, using the approximations provided by $\mathcal{R}_G$, where elements can be approximated by linear combinations of terms of the form $f \lambda_{g}$, where $g\in G$ and $f\in\ell^\infty(G)$, is convenient to study the trace space of the algebra (see, e.g., \cite{ALLW18R,ALM19}). On the other hand, crossed products are instead useful to analyze structural aspects of the algebra like, e.g., the relation between the nuclearity of the C*-algebra and the amenability of the action (cf., \cite[Theorem~4.3.4]{BO08}). Finally, the C*-algebra $C_u^*(\Lambda(G, K))$ captures the \textit{large-scale geometry} of $G$. Moreover, the preceding theorem shows that every partial translation $t \colon A \rightarrow B$ as in Eq.~(\ref{eq_prttrn_gr}) can also be represented as a combination of left multiplication by elements of the group $G$. This proves to be helpful when studying the \textit{property A} of $G$ (see~\cite{BO08,NY12,R03} or Section~\ref{sec_scht_a}).

  In the present article we will generalize Theorem~\ref{gr_thm} to the context of a discrete and countable inverse semigroup $S$. In particular, we will associate with $S$ a graph $\Lambda_S$, endowed with the path length metric, and focus on the relation between the C*-algebras $\mathcal{R}_S$ and $C^*_u(\Lambda_S)$. We also analyze large scale properties of $\Lambda_S$ and relate them with C*-properties of the uniform Roe algebra.

  Recall that a semigroup $S$ (which we assume to be countable and discrete) is a set equipped with a binary and associative operation. We say that $S$ is an \textit{inverse semigroup} if for every $s \in S$ there is a unique $s^* \in S$ such that $ss^*s = s$ and $s^*ss^* = s^*$. We denote by $E(S):=\{s^*s\mid s\in S\}$ the set of projections, which is a commutative inverse subsemigroup of $S$ (some standard references on this topic are \cite{H76,H95,L98}). The dynamics in $S$ given by left multiplication are locally injective, meaning that for each $s \in S$ there is some $D_{s^*s} \subset S$ (the domain of $s$) such that if $sx = sy$ then $x = y$ for any $x, y \in D_{s^*s}$. This allows to analyze notions like amenability (introduced by Day for general semigroups in \cite{D57}) in a more detailed way. For example, it was introduced in \cite{ALM19} the more general notion of {\em domain measurability}, which precisely captures the dynamical invariance of the probability measure on $S$ (cf., Section~\ref{sec_qiinv}). Moreover, inverse semigroups allow for a natural connection with C*-algebras via a standard generalization of the left regular representation mentioned above. The representing operators in this case are partial isometries denoted by $V_s$, where $s\in S$. The C*-algebra $\mathcal{R}_S$ is, in analogy to the group case, the C*-completion of the sets $\{V_s\mid s\in S\}$ and $\ell^\infty(S)$. We refer to~\cite{A16,M10,P12,DP85,E08,MS14} for additional references on the relation with groupoids and C*-algebras and to~\cite{F13,GK13} for connections to coarse geometry.

  To obtain a metric space associated to $S$ we have to consider the so-called (left) Sch\"utzenberger graphs of $S$. Recall that two elements $x,y\in S$ are \textit{$\mathcal{L}$-related} if $x^*x = y^*y$. The $\mathcal{L}$-classes of $S$ contain exactly one projection, and, hence, the semigroup decomposes into the disjoint union labeled by elements in $E(S)$:
  \[
   S=\sqcup_{e\in E(S)}L_e\;.
  \]
  Assuming that $S$ is generated by a fixed symmetric set $K$, each component above can be given a graph structure (and hence a path length metric), where $x, y \in L_e$ are joined by an edge if $kx = y$ for $k\in K$. The resulting graph is known as the (left) Sch\"{u}tzenberger graph of $e \in S$, which we denote by $\Lambda_{L_e}$. Their disjoint union $\Lambda_S:=\sqcup_{e}\Lambda_{L_e}$ is then an undirected graph that describes the geometry of the semigroup. Observe that $\Lambda_S$ can also be constructed by erasing all the directed edges in the usual left Cayley graph of $S$ (see Section~\ref{sec_dist}). This construction allows to construct the uniform Roe algebra $C^*_u(\Lambda_S)$, which is an analogue of the uniform Roe algebra of a group. However, in the context of inverse semigroups, the C*-algebra $\mathcal{R}_S$ needs not coincide with $C^*_u(\Lambda_S)$ (see Example~\ref{ex_c0}). The problem arises as the semigroup $S$ is naturally ordered, and the C*-algebra $\mathcal{R}_S$ inherits that order, while the graph $\Lambda_S$ does not. To circumvent this problem we introduce the notion of a \textit{finite labeling} of $S$ (see Definition~\ref{def_fl}), which generalizes the class of finitely generated inverse semigroups. In fact, this notion is not only required if one wants to generalize Theorem~\ref{gr_thm}, but it turns out that it is also necessary (cf., Theorem~\ref{thm_mtrspc}):
  \begin{introtheorem}\label{introthm_mtrspc}
    Let $S$ be a countable and discrete inverse semigroup generated by a symmetric set $K \subset S$. The following statements are equivalent:
    \begin{enumerate}
      \item \label{introthm_mtrspc_fl} The pair $(S, K)$ admits a finite labeling (see Definition~\ref{def_fl}). 
      \item \label{introthm_mtrspc_calg} The following C*-algebras are equal: $\mathcal{R}_S=\ell^{\infty}(S) \rtimes_r S = C^*_u(\Lambda(S, K))$.
    \end{enumerate}
  \end{introtheorem}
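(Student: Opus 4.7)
The plan is to separate the two equalities in (\ref{introthm_mtrspc_calg}). The identity $\mathcal{R}_S = \ell^{\infty}(S) \rtimes_r S$ is a direct adaptation of the group case and does not require the finite labeling hypothesis: the diagonal action of $\ell^{\infty}(S)$ together with the left regular representation $V_{\cdot}$ of $S$ on $\ell^2(S)$ furnishes a covariant pair that, on the one hand, generates $\mathcal{R}_S$ by definition and, on the other, realises the reduced crossed product in the standard framework for inverse semigroup actions (see \cite{P12,M10,A16}). I would dispose of this equality first as a preliminary step, so that the substantive content of the theorem is the equivalence between $\mathcal{R}_S = C^*_u(\Lambda(S,K))$ and condition (\ref{introthm_mtrspc_fl}).

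For (\ref{introthm_mtrspc_fl}) $\Rightarrow$ (\ref{introthm_mtrspc_calg}), the containment $\mathcal{R}_S \subseteq C^*_u(\Lambda(S,K))$ should hold unconditionally: each $V_k$ with $k \in K$ sends $\delta_x$ to $\delta_{kx}$ on $D_{k^*k}$, and $x$ and $kx$ are adjacent in $\Lambda_S$ by construction, so $V_k$ has propagation at most $1$; hence every $V_s = V_{k_1}\cdots V_{k_n}$ has propagation at most $n$, while elements of $\ell^{\infty}(S)$ act as diagonal multipliers of propagation zero. For the reverse inclusion I would use the standard generating set of $C^*_u(\Lambda_S)$ consisting of diagonal multipliers together with partial translations $U_t$ coming from partial bijections $t\colon A \to B$ with finite propagation, as in Eq.~(\ref{eq_prttrn_gr}). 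After a reduction to propagation-$1$ partial translations via composition with cutoffs, each edge $(x, t(x))$ in $\Lambda_S$ can be written as $t(x) = k_x \cdot x$ for some $k_x \in K$. The finite labeling hypothesis should supply a finite $K_0 \subset K$ covering every such label, whence
\begin{equation*}
  U_t \;=\; \sum_{k \in K_0} V_k \, \mathbf{1}_{A_k}, \qquad A_k := \{ x \in A \mid k_x = k \},
\end{equation*}
realises $U_t$ as a finite sum of elements of $\mathcal{R}_S$.

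For the converse (\ref{introthm_mtrspc_calg}) $\Rightarrow$ (\ref{introthm_mtrspc_fl}), I would argue contrapositively along the lines of Example~\ref{ex_c0}. If $(S,K)$ fails to admit a finite labeling, one extracts a propagation-$1$ partial bijection $t$ of $\Lambda_S$ whose edge-labels exhaust an infinite subset of $K$. Any tentative approximation of $U_t$ by a finite sum $\sum_i f_i V_{s_i}$ inside $\mathcal{R}_S$ can only reproduce edges labelled by the finite alphabet $\{s_i\}$, so examining matrix coefficients at points $x$ whose label $k_x$ lies outside this alphabet should yield a uniform $\varepsilon>0$ lower bound on $\|U_t - \sum_i f_i V_{s_i}\|$. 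This forces $U_t \notin \mathcal{R}_S$, hence $\mathcal{R}_S \subsetneq C^*_u(\Lambda(S,K))$.

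The main obstacle I anticipate is the reduction to propagation-$1$ partial translations in the forward direction. The splitting of $\Lambda_S$ into Sch\"utzenberger components forces every partial translation to respect the $\mathcal{L}$-class structure of $S$, and the labels $k_x$ are pointwise rather than uniform, so one must verify that composing and cutting off a propagation-$r$ partial translation produces propagation-$1$ pieces whose labels still lie in a finite subset of $K$. Formulating Definition~\ref{def_fl} so that finite labeling behaves stably under such operations, and showing it is sharp for the C*-equality, is where the proof should demand the most care.
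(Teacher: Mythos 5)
Your overall architecture matches the paper's: the crossed-product equality is indeed handled separately and unconditionally (Section~\ref{sec_crossed}), the inclusion $\mathcal{R}_S\subseteq C^*_u(\Lambda_S)$ is unconditional (Proposition~\ref{prop_easy_containment}), and the converse direction is proved by exhibiting a propagation-one operator uniformly bounded away from $\mathcal{R}_{S,\mathrm{alg}}$ (Theorem~\ref{thm_mtrspc}). However, both of your substantive steps rest on a misreading of Definition~\ref{def_fl}: finite labeling does \emph{not} say that the $K$-labels $k_x$ of the edges lie in a finite subset $K_0\subset K$; it says that each edge $(x,y)$ with $y\in Kx$ is \emph{realized} by some element $m$ of the fixed finite set $F=\bigcup_{p\le C}K_1^p\subset S$, i.e.\ $mx=y$, even though the label witnessing the edge may range over an infinite subset of $K$. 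The semigroup $T=(\mathbb{N},\max)$ of Example~\ref{ex_fl} is the test case: each vertex $n$ carries loops labeled by all $k\le n$, so the labels are unbounded in $K$, yet $T$ is FL because the single element $1$ realizes all these edges. Consequently your converse argument has a genuine gap: the negation of FL is not ``there is a propagation-one partial bijection whose edge-labels exhaust an infinite subset of $K$'', and your key claim that a finite sum $\sum_i f_iV_{s_i}$ ``can only reproduce edges labelled by the finite alphabet $\{s_i\}$'' is false --- a single $s_i$ can reproduce edges carrying infinitely many distinct $K$-labels (again $V_1$ in $(\mathbb{N},\max)$, or any dominating element in an F-inverse semigroup). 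As sketched, your lower-bound argument would ``show'' $\mathcal{R}_T\ne C^*_u(\Lambda_T)$ for $(\mathbb{N},\max)$, contradicting Corollary~\ref{cor_finv}. The correct extraction diagonalizes over the actual negation of Definition~\ref{def_fl}: enumerate $K=\{k_1,k_2,\dots\}$, and for each $n$ pick an adjacent pair $x_n\,\mathcal{L}\,y_n$ with $y_n\in Kx_n$ but $y_n\notin K_n^p x_n$ for all $p<n$, where $K_n=\{k_1,\dots,k_n\}$; bounded geometry lets one take the $x_n$ distinct, the operator $\delta_{x_n}\mapsto\delta_{y_n}$ has propagation $\le 1$, and for any $\sum_{i=1}^m f_iV_{s_i}$ one chooses $n$ so large that every $s_i$ is a word of length $<n$ in $K_n$, forcing $s_ix_n\ne y_n$ and a norm gap $\ge 1$.

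The same misreading makes your forward-direction formula $U_t=\sum_{k\in K_0}V_k\mathbf{1}_{A_k}$ with $K_0\subset K$ unobtainable from FL; the fix is harmless, since one should instead sum over the finite set $F\subset S$ of realizing elements, $U_t=\sum_{m\in F}V_m\mathbf{1}_{A_m}$ with $A_m=\{x\in A\mid mx=t(x)\}$, and $V_m\in\mathcal{R}_S$ for any $m\in S$. Note also that the reduction to propagation-one partial translations, which you single out as the main obstacle, is unnecessary: the paper's reformulation in Proposition~\ref{prop:fl:newchr} provides, for every radius $R$, a single finite $F\subset S$ realizing \emph{all} pairs at distance $\le R$ (via Lemma~\ref{lemma_dist_lclasses}, a path of length $\le R$ from $x$ to $y$ gives $s$ with $sx=y$ and $d(s^*s,s)\le R$, hence $m\ge s$ in $F$ with $mx=y$), so an arbitrary operator of propagation $R$ is written directly as a finite sum $\sum_{m\in F}\xi_m V_m$ without any cutting into unit steps. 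Finally, bounded geometry of $\Lambda_S$ is genuinely used on both sides (to decompose finite-propagation operators, and to choose the $x_n$ distinct), so it should be stated as a hypothesis rather than left implicit.
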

  
  In this article we also address several \textit{large scale properties} of the inverse semigroup in relation with, in particular, the graph $\Lambda_S$. For example, we show that domain measurability of $S$ is a quasi-isometric invariant of $\Lambda_S$ (cf., Theorem~\ref{ame_qiinv_gr}), generalizing the well known statement that amenability of groups is a quasi-isometric invariant.
  \begin{introtheorem}\label{introthm_NEW}
   Let $S$ and $T$ be quasi-isometric inverse semigroups. Suppose they both admit a finite labeling. If $T$ is domain measurable then so is $S$.
  \end{introtheorem}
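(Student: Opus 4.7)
The plan is to characterize domain measurability of $S$ in purely geometric terms on the graph $\Lambda_S$, and then transport this characterization through the quasi-isometry. The finite labeling hypothesis on both $S$ and $T$ is the crucial link, since by Theorem~\ref{introthm_mtrspc} it identifies $\mathcal{R}_S = C^*_u(\Lambda_S)$ and $\mathcal{R}_T = C^*_u(\Lambda_T)$, allowing the dynamical notion of domain measurability to be detected directly on the metric side.

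First I would prove the following geometric reformulation under the finite labeling assumption: $S$ is domain measurable if and only if $\Lambda_S$ satisfies a F\o lner condition, i.e., for every $r>0$ and $\varepsilon>0$ there exists a nonempty finite $F \subset \Lambda_S$ with $|\partial_r F|/|F| < \varepsilon$, where $\partial_r F$ denotes the $r$-boundary of $F$ in the path-length metric. One direction follows by recasting the dynamical invariant mean on $S$ afforded by domain measurability as an approximately invariant finitely supported probability measure, as in the arguments of \cite{ALM19}. For the converse, the identification $\mathcal{R}_S = C^*_u(\Lambda_S)$ shows that every partial translation on $\Lambda_S$ of finite propagation is a finite combination of the operators $V_s$, $s \in S$, so a F\o lner sequence of $\Lambda_S$ yields a sequence of normalized characteristic functions that are approximately invariant under the partial dynamics of $S$, witnessing domain measurability.

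Second, I would apply the standard transport of F\o lner sets across quasi-isometries of bounded geometry metric spaces. If $\varphi \colon \Lambda_S \to \Lambda_T$ is a $(\lambda, c)$-quasi-isometry, then for any $(r', \varepsilon')$-F\o lner set $F \subset \Lambda_T$ with $r'$ sufficiently large relative to $(\lambda, c)$, the preimage $\varphi^{-1}(F)$ is an approximate F\o lner set in $\Lambda_S$, up to constants depending only on the quasi-isometry data and on the bounded geometry of both spaces. Feeding a F\o lner sequence of $\Lambda_T$ (provided by the domain measurability of $T$ via the first step) into this transport then yields a F\o lner sequence of $\Lambda_S$, and hence domain measurability of $S$.

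The main obstacle lies in the geometric reformulation of step one. While the principle is familiar from the group case, inverse semigroups introduce two subtleties. First, the graph decomposes as $\Lambda_S = \sqcup_{e \in E(S)} \Lambda_{L_e}$ into Sch\"utzenberger components, and one must check that a F\o lner sequence need not be confined to a single component in order to certify an invariant domain measure on all of $S$. Second, because multiplication is only partial, domain measurability is defined relative to the domains $D_{s^*s}$, so the equivalence with a geometric F\o lner condition genuinely depends on the finite labeling, which guarantees that every finite-propagation partial isometry on $\Lambda_S$ is implemented by finite products of the generators of $S$.
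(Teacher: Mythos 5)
Your overall strategy is the same as the paper's: your step one is Proposition~\ref{fol_fl} (a metric F\o lner characterization of domain measurability under the FL hypothesis, proved there via Proposition~\ref{prop:fl:newchr} together with the localization Lemma~\ref{dom_meas_lclasses}), and your step two is the proof of Theorem~\ref{thm_ame_qi} (transport of F\o lner sets through the quasi-isometry by taking preimages). However, as written your transport step has a genuine gap. A quasi-isometry $\varphi\colon\Lambda_S\to\Lambda_T$ is only coarsely surjective, so a F\o lner set $F\subset\Lambda_T$ may meet $\mathrm{im}\,\varphi$ in a negligible fraction of its points, or not at all; then $\varphi^{-1}(F)$ can be empty, and even when it is nonempty the available estimate $|\mathcal{N}_R\varphi^{-1}(F)\setminus\varphi^{-1}(F)|\leq N\varepsilon|F|$ (where $N$ is a uniform bound on the fibres of $\varphi$, coming from bounded geometry) is useless because $|\varphi^{-1}(F)|$ admits no lower bound in terms of $|F|$ without surjectivity. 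This is exactly what the paper spends Lemma~\ref{dom_meas_onto} and Lemma~\ref{qi_inv_subgrp} on: replace $S$ by $S\times G$ for a suitable finite group $G$ so that the quasi-isometry becomes surjective, pull back, and then descend domain measurability from $S\times G$ to $S$. (Alternatively one can repair your step by pulling back the thickening $\mathcal{N}_{R_0}F$, with $R_0$ the coarse-density radius of $\mathrm{im}\,\varphi$; either way the point must be addressed, since the "standard transport" you invoke does not apply verbatim to a non-surjective map.)

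Two secondary remarks. First, you locate the FL hypothesis (via $\mathcal{R}_S=C^*_u(\Lambda_S)$) in the implication "metric F\o lner $\Rightarrow$ domain measurable", but that is the easy direction and needs no FL at all: each $V_s$ has finite propagation $d(s^*s,s)$ by Lemma~\ref{lemma_dist_lclasses}, so metrically almost-invariant sets are automatically almost invariant under any finite subset of $S$. FL is needed in the opposite direction, in the form of Proposition~\ref{prop:fl:newchr}: it lets one implement all moves of distance at most $R$ by a finite subset of $S$, so that dynamical F\o lner sets have small $R$-neighbourhoods; the C*-identification of Theorem~\ref{thm_mtrspc} is heavier machinery than is needed here. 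Second, you allow F\o lner sets spread over several Sch\"utzenberger components, whereas the paper localizes them in a single $\mathcal{L}$-class (Lemma~\ref{dom_meas_lclasses}); your non-localized variant does suffice for both directions, so this is a harmless simplification rather than an error, but it should be checked explicitly rather than merely flagged.
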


  Finally, we characterize the property A of the components of $\Lambda_S$ in terms of the nuclearity and exactness of the corresponding C*-algebras (see Theorem~\ref{invsem_thm_l_a}). Indeed, since the connected components of the graph $\Lambda_S$ are the Schützenberger graphs of $S$ (see~\cite{Sch57,Sch58,L98} and Section~\ref{sec_mtrspc} below), we have the following:
  \begin{introtheorem}\label{introthm_a}
    Let $S$ be a countable and discrete inverse semigroup, and let $L \subset S$ be an $\mathcal{L}$-class such that the left Schützenberger graph $\Lambda_L$ is of bounded geometry. Let $p_L$ be the orthogonal projection from $\ell^2(S)$ onto $\ell^2(L)$. Suppose, moreover, that the graph $\Lambda_L$ admits a finite labeling. Then the following are equivalent:
    \begin{enumerate}
      \item The graph $\Lambda_L$ has Yu's property A.
      \item The C*-algebra $p_L \mathcal{R}_S p_L$ is nuclear.
      \item The C*-algebra $p_L C_r^*(S) p_L$ is exact.
    \end{enumerate}
  \end{introtheorem}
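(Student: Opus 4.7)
The plan is to reduce the theorem to the classical characterization of Yu's property A in terms of uniform Roe algebras, with Theorem~\ref{introthm_mtrspc} providing the crucial bridge. The first step is to establish the identification
\[
  p_L \, \mathcal{R}_S \, p_L \;=\; C^*_u(\Lambda_L).
\]
Since $\Lambda_L$ is a connected component of $\Lambda_S$, every finite propagation operator on $\ell^2(S)$ decomposes as a direct sum with respect to the components of $\Lambda_S$, and hence $p_L C^*_u(\Lambda_S) p_L = C^*_u(\Lambda_L)$. Applying (a local version of) Theorem~\ref{introthm_mtrspc} to this corner, which is justified by the finite labeling hypothesis on $\Lambda_L$, then yields the desired equality: the operators $p_L V_s p_L$ are precisely the partial translations of $\Lambda_L$ induced by left multiplication by $s$, and together with $\ell^\infty(L)$ they generate all of $C^*_u(\Lambda_L)$.

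With this in hand, the equivalence (1) $\Leftrightarrow$ (2) follows from the classical theorem (see, e.g., \cite[Chapter~5]{BO08}) stating that a discrete metric space of bounded geometry has Yu's property A if and only if its uniform Roe algebra is nuclear.

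For (2) $\Leftrightarrow$ (3), the implication (2) $\Rightarrow$ (3) is immediate, as $p_L C_r^*(S) p_L$ sits as a C*-subalgebra of $p_L \mathcal{R}_S p_L$, and subalgebras of nuclear C*-algebras are exact. The converse (3) $\Rightarrow$ (1) is the main obstacle and requires adapting the group-theoretic result (due essentially to Ozawa and Guentner--Kaminker) that exactness of $C_r^*(G)$ implies property A of the Cayley graph of $G$. My approach would pass through the étale groupoid $\mathcal{G}_S$ associated with $S$ (cf.~\cite{E08,P12}), in which $C_r^*(S)$ is realised as a reduced groupoid C*-algebra and exactness corresponds to amenability at infinity of $\mathcal{G}_S$; this amenability, restricted to the piece of the unit space corresponding to $L$, then translates back to property A of $\Lambda_L$. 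The main technical difficulty will be carrying this correspondence through the corner $p_L \cdot p_L$: one must show that exactness of the corner alone forces the localised amenability needed on $L$, and here the finite labeling assumption on $\Lambda_L$ will play an essential role.
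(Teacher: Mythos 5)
Your first two steps are sound and match the paper: the identification $p_L\mathcal{R}_Sp_L = C_u^*(\Lambda_L)$ under the finite labeling hypothesis is exactly Theorem~\ref{thm_mtrspc_lclasses}, the equivalence of property A with nuclearity of the uniform Roe algebra is legitimate for bounded geometry spaces, and (2)~$\Rightarrow$~(3) via ``subalgebras of nuclear algebras are exact'' is the same observation the paper makes (modulo the small point, noted in the paper, that $p_LC_r^*(S)p_L$ need not be closed, since $p_L\notin C_r^*(S)$ in general).

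The genuine gap is the implication (3)~$\Rightarrow$~(1), which you leave as a plan rather than a proof, and the plan as stated does not go through. You propose to realise $C_r^*(S)$ as a reduced groupoid C*-algebra and to use that ``exactness corresponds to amenability at infinity of $\mathcal{G}_S$''; but for general \'etale groupoids this equivalence is not available --- amenability at infinity implies exactness of the reduced algebra, while the converse is precisely the delicate issue studied in \cite{A16,A19} and is not a citable theorem in this generality (for groups it is the deep Ozawa/Guentner--Kaminker result, and it does not transfer mechanically to inverse semigroup groupoids). On top of that, you would still have to localise whatever amenability you extract to the single $\mathcal{L}$-class $L$ starting only from exactness of the corner $p_LC_r^*(S)p_L$, which you yourself flag as the ``main technical difficulty'' without resolving it. The paper avoids all of this with two much shorter moves: first, exactness of $p_LC_r^*(S)p_L$ is upgraded to exactness of $p_L\mathcal{R}_Sp_L$ by a tensor trick --- given c.c.p.\ maps $\tilde\varphi\colon p_LC_r^*(S)p_L\to M_n$ and $\tilde\psi\colon M_n\to\mathcal{B}(\ell^2(L))$ approximating the generators $V_sp_L$, one forms $fV_s\mapsto f\otimes\tilde\varphi(V_s)$ and $f\otimes b\mapsto f\tilde\psi(b)$ through the nuclear algebra $\ell^{\infty}(L)\otimes M_n$ (Proposition~\ref{prop_subalg_nuclear}); second, since under FL $p_L\mathcal{R}_Sp_L\cong C_u^*(\Lambda_L)$, Sako's theorem \cite{S19} gives that exactness of a uniform Roe algebra of a bounded geometry space already implies nuclearity and property A, closing the cycle. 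So the missing direction in your argument is exactly the place where the paper invokes Sako's result rather than any groupoid amenability-at-infinity machinery; without that (or an equivalent substitute, e.g.\ an Ozawa-style argument carried out for the corner), your proof is incomplete.
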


  The article is structured as follows. Section~\ref{sec_invsem} recalls definitions, notation and constructions in the context of inverse semigroups. Furthermore, in Section~\ref{sec_crossed} we prove that, for inverse semigroups, $\mathcal{R}_S = \ell^{\infty}\left(S\right) \rtimes_r S$. 
  Section~\ref{sec_mtrspc} considers the more subtle metric space scenario, constructs the graph $\Lambda_S$ and proves Theorem~\ref{introthm_mtrspc}. 
  Moreover, Proposition~\ref{prop:fl:newchr} gives a geometrical characterization of the finite labeling property of inverse semigroups (see also Remark~\ref{rem:fl:cover}).
  Section~\ref{sec_qiinv} studies two quasi-isometric invariants of the graph $\Lambda_S$, first domain measurability and then property A. In particular, Section~\ref{sec_scht_a} focuses on the left Sch\"{u}tzenberger graphs of $S$ and proves Theorem~\ref{introthm_a}. As an application, we also relate the exactness of the reduced semigroup C$^*$-algebra, the nuclearity of the uniform Roe algebra and the property A of the graph $\Lambda_S$. The special classes of F-inverse and E-unitary inverse semigroups are also considered.

  \textbf{Conventions:} Throughout the paper, $S$ stands for a countable and discrete inverse semigroup (not necessarily unital and not necessarily finitely generated). 
  We will use the symbol $K\subset S$ to denote a (countable) and symmetric set of generators, i.e., if $k\in K$ then $k^*\in K$. 
  We will denote by $\ell^2(S)$ the complex Hilbert space of square-summable functions $\phi \colon S \rightarrow \mathbb{C}$, and by $\mathcal{B}(\ell^2(S))$ the space of bounded operators on it. We canonically embed $\ell^{\infty}(S)$ into $\mathcal{B}(\ell^2(S))$ as diagonal operators. The canonical orthonormal basis of $\ell^2(S)$ will be denoted by $\{\delta_x\}_{x \in S}$. The norm of an operator $T \in \mathcal{B}(\ell^2(S))$ is denoted by $||T||$. Given two sets $X_1,X_2$ we denote their disjoint union by $X_1\sqcup X_2$ and the cardinality of $X_1$ by $|X_1|$.

  \textbf{Acknowledgements:} The second author would like to thank N\'{o}ra Szak\'{a}cs for fruitful conversations on Sch\"{u}tzenberger graphs and related topics. We also thank the referee for their helpful comments on a previous version of the manuscript.
  
  \section{Inverse semigroups and C*-algebras}\label{sec_invsem}
  In this section we introduce the definition of inverse semigroup as well as some important structures and examples that will be needed later. Some standard textbooks for additional motivation and proofs are \cite{H76,H95,L98,P12}. Given an inverse semigroup $S$ there is a C*-algebra $\mathcal{R}_S$ which naturally generalizes the uniform Roe algebra of a discrete countable group (cf., \cite[Proposition~5.1.6]{BO08}). For results relating the amenability of $S$ and C*-properties of $\mathcal{R}_S$ we refer to~\cite{ALM19}.
  \begin{definition}
    An \textit{inverse semigroup} is a non-empty set $S$ equipped with an associative binary operation such that for all $s \in S$ there is a unique $s^* \in S$ satisfying $s s^* s = s$ and $s^* s s^* = s^*$.
  \end{definition}
  We say that $S$ is unital if there is an element $1 \in S$ such that $1 s = s 1 = s$ for all $s \in S$.
  An element $e \in S$ is a \textit{projection}, or \textit{idempotent}, if $e = e^2$. Observe that this implies that $e$ is self-adjoint, i.e., $e = e^*$. The set of projections is denoted by $E(S)$. Note that $s^*s$ is a projection for all $s \in S$, and thus $E(S)$ is never empty. Furthermore, $E(S)$ is a commutative inverse sub-semigroup of $S$ (see~\cite{V53} or~\cite[Theorem~3]{L98}). Any group is trivially an inverse semigroup with $s^*=s^{-1}$. Conversely, it is easy to see that $S$ is a group if and only if $E(S)$ has exactly one element, namely the identity of the group.

  We say that $s,t \in S$ are \textit{$\sigma$-equivalent} if there is some projection $e\in E(S)$ such that $se = te$. Note this is equivalent to the existence of some projection $f \in E(S)$ such that $fs = ft$ (take $f = ses^* = tet^* \in E(S)$). The relation $\sigma$ is a congruence in $S$ and we denote by $G(S) := S/\sigma$ the corresponding quotient, which is a group called the \textit{maximal homomorphic image} of $S$. For simplicity we will also denote by $\sigma$ the canonical projection 
  $\sigma \colon S \rightarrow G(S)$.

  Any inverse semigroup $S$ has a natural partial order. Indeed, we say $s \leq t$ if there is some projection $e \in E(S)$ such that $s = te$. Observe that, again, this is equivalent to the existence of an idempotent $f \in E(S)$ satisfying $s = ft$ (just take $f := tet^*$). From the preceding definitions we have that if $s \leq t$ then $s \sigma t$, so that the partial order $\leq$ restricts to a partial order within the $\sigma$-classes. 
  
  Inverse semigroups have a canonical representation as {\em partial bijections in $S$} and this dynamical picture will be useful in this article. In fact, given an element $s \in S$, we define the \textit{domain of $s$} by 
  \begin{equation}
   D_{s^*s}:=\{ x\in S \mid s^*sx = x\} =\{ x\in S \mid xx^*\leq s^*s\} = s^*s \cdot S \subset S\;. \nonumber
  \end{equation}
  It can be shown that left multiplication by $s$ is a bijection between $D_{s^*s}$ and $D_{ss^*}$, the so-called \textit{range of $s$}. Moreover, observe that if $s\leq t$ then $s^*s\leq t^*t$ and $D_{s^*s}\subset D_{t^*t}$.
  \begin{example}\label{ex_int_poly}
    For any $n \in \mathbb{N} \cup \left\{\infty\right\}$ the \textit{polycyclic monoid} $\mathcal{P}_n$ (see, e.g., \cite{NP70,L98}) is the inverse monoid given by the presentation:
    \begin{equation}
      \mathcal{P}_n := \left\{1\right\} \sqcup \left\langle \; a_1, \dots, a_n \;\; \mid \;\; a_i^* a_j = 
        \left\{ \begin{array}{lcc}
                  1 & \text{if} & i = j \\
                  0 &  & \text{otherwise}
                \end{array}
        \right.
      \right\rangle \sqcup \{0\} \;.\nonumber
    \end{equation}
    If $n = 1$, for instance, it can be shown that $\mathcal{P}_1 = \{a^i a^{*j} \mid i, j \in \mathbb{N}\} \sqcup \{0\}$, where $E(\mathcal{P}_1) = \{a^ia^{*i} \mid i \in \mathbb{N}\} \sqcup \{0\}$. Moreover, one can see that $D_{a^ia^{*i}} = \{a^p a^{*q} \mid p \geq i\} \sqcup \{0\}$.
  \end{example}
  
  Now we give the construction of the C*-algebra $\mathcal{R}_S$, which generalizes the uniform Roe algebra of a discrete group. Given an inverse semigroup $S$, consider its \textit{left regular representation}:
  \begin{equation}
    V \colon S \rightarrow \mathcal{B}\left(\ell^2\left(S\right)\right), \quadtext{where} V_s (\delta_x) := \left\{
                      \begin{array}{lcc}
                        \delta_{sx} & \text{if} & x \in D_{s^*s} \\
                        0 &  & \text{otherwise}\;.
                      \end{array}
                    \right. \nonumber
  \end{equation}
  $V$ is then a faithful representation of $S$ by partial isometries on $\ell^2\left(S\right)$ (see~\cite{V53} and~\cite[Proposition~2.1.4]{P12}). Note the condition $x \in D_{s^*s}$ guarantees that $V_s$ is bounded. Moreover, we consider $\ell^{\infty}(S)$ as multiplication (i.e., diagonal) operators in $\ell^2(S)$. The C*-algebra $\mathcal{R}_S$ is the norm completion of the *-algebra generated by the products $f V_s$, where $f \in \ell^{\infty}(S)$ and $s \in S$:
  \begin{equation}
    \mathcal{R}_S := C^* \big(\ell^{\infty}\left(S\right) \, \cdot \, \left\{V_s\right\}_{s \in S}\big) \subset \mathcal{B}\left(\ell^2\left(S\right)\right). \nonumber
  \end{equation}
  Note that, in case $S$ is unital, the C*-algebra $\mathcal{R}_S$ will be generated by $\{V_s\}_{s \in S}$ and $\ell^{\infty}(S)$. However, if $S$ is not unital then $\mathcal{R}_S$ may or not be unital (see Example~\ref{ex_c0}). Another C*-algebra of interest to us is the so-called \textit{reduced semigroup C*-algebra}:
  \begin{equation}
    C_r^*\left(S\right) := C^*\big(\left\{V_s\right\}_{s \in S}\big) \subset \mathcal{R}_S. \nonumber
  \end{equation}

    \subsection{The reduced crossed product}\label{sec_crossed}
    Given a C*-algebra $\mathcal{A}$ and a (continuous) action of a group $G$ by *-automorphisms on $\mathcal{A}$ the crossed product of $\mathcal{A}$ by $G$ is a larger C*-algebra containing $\mathcal{A}$ and a copy of $G$ as unitaries that implement the action (see, e.g.,~\cite[Section~4.1]{BO08}). This construction was later generalized to the setting of inverse semigroups (see, for instance,~\cite{S97,E08,MS14}). Since in the subsequent sections we will only deal with the commutative case $\mathcal{A} = C_0(X)$, that is the only case we introduce.
    \begin{definition}\label{def_act_invsem}
      Let $X$ be a locally compact Hausdorff space and $S$ be a countable inverse semigroup.
      \begin{enumerate}
        \item \label{def_act_invsem_partaut} A \textit{partial automorphism} of $C_0(X)$ is a triple $(\phi, E_1, E_2)$, where $E_i \triangleleft C_0(X)$ are closed two-sided ideals and $\phi \colon E_1 \rightarrow E_2$ is a *-isomorphism. The set of partial automorphisms of $C_0(X)$ is denoted by $\text{PAut}(X)$. We equip $\text{PAut}(X)$ with the binary operation given by composition of maps whenever defined.
        \item \label{def_act_invsem_act} An \textit{action} of $S$ on $X$ is a homomorphism $\alpha \colon S \rightarrow \text{PAut}(X)$, where 
        $s \mapsto (\alpha_s, E_{1,s}, E_{2,s})$.
        \end{enumerate}
    \end{definition}

    Observe that, for the action defined in~(\ref{def_act_invsem_act}) above, the domain of the map $\alpha_s$ (namely $E_{1,s}$) only depends on $s^*s$, and its range (namely $E_{2,s}$) only depends on $ss^*$. Indeed, this follows from the fact that $\alpha$ is a semigroup homomorphism and $s^*s$ acts as the identity on $D_{s^*s}$. Therefore, and for the sake of simplicity, we will henceforth denote $E_{1,s}$ by $E_{s^*s}$ and $E_{2,s}$ by $E_{ss^*}$. We will be particularly interested in the following action.
    \begin{proposition}\label{prop:act}
      Let $S$ be a countable and discrete inverse semigroup. Given $s \in S$ let
      \begin{equation}
        E_{s^*s} := \left\{f \in \ell^{\infty}\left(S\right) \;\, \text{such that} \;\, \text{supp}\,(f) \subset D_{s^*s} \right\}. \nonumber
      \end{equation}
      Then the map $\alpha \colon s \mapsto \alpha_s$ given by $(\alpha_s f)(x) = f(s^*x)$, where $f \in E_{s^*s}$, defines an action.
    \end{proposition}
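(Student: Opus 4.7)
The plan is to verify, in turn, that each $E_{s^*s}$ is a closed two-sided ideal of $\ell^\infty(S)$, that $\alpha_s$ is a well-defined $*$-isomorphism $E_{s^*s} \to E_{ss^*}$, and that $s \mapsto \alpha_s$ is a semigroup homomorphism into the partial automorphisms of $\ell^\infty(S)$. The ideal property is immediate since $E_{s^*s}$ is the principal ideal generated by the indicator of $D_{s^*s}$, and is clearly norm-closed. Before proceeding, I would clarify the formula: $(\alpha_s f)(x) = f(s^*x)$ must be read with the implicit convention that $\alpha_s f$ is taken to be zero outside $D_{ss^*}$, so that it actually lies in $E_{ss^*}$. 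Under this reading, $\alpha_s$ is exactly the pullback of functions along the partial bijection $L_s \colon D_{s^*s} \to D_{ss^*}$ given by left multiplication by $s$.

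With this interpretation in place, the $*$-isomorphism property becomes routine. Linearity, multiplicativity, and compatibility with complex conjugation are immediate pointwise identities on $D_{ss^*}$ (and trivial outside). Bijectivity, with inverse $\alpha_{s^*}$, reduces to the cancellation laws $s^*sx = x$ for $x \in D_{s^*s}$ and $ss^*y = y$ for $y \in D_{ss^*}$; explicitly, for $f \in E_{s^*s}$ and $x \in D_{s^*s}$,
\[
\bigl(\alpha_{s^*}(\alpha_s f)\bigr)(x) \;=\; (\alpha_s f)(sx) \;=\; f(s^*sx) \;=\; f(x),
\]
and the reverse composition is analogous.

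The substantive content is the homomorphism property $\alpha_{st} = \alpha_s \circ \alpha_t$ in the sense of Definition~\ref{def_act_invsem}, where the right-hand side denotes the partial-automorphism composition, i.e.\ the largest well-defined restriction. Unfolding definitions, one must match both the domain/range ideals and the pointwise formulas; the crucial step is the set identity
\[
D_{(st)^*(st)} \;=\; \{\,x \in D_{t^*t} \mid tx \in D_{s^*s}\,\},
\]
together with its symmetric companion $D_{(st)(st)^*} = \{\,y \in D_{ss^*} \mid s^*y \in D_{tt^*}\,\}$. Once these identities are established, equality of the two maps is the immediate pointwise computation $(\alpha_{st}f)(y) = f(t^*s^*y) = (\alpha_s(\alpha_t f))(y)$ on the common domain. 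I would prove the displayed identity using only the commutativity of $E(S)$, the relation $tt^*t = t$, and the consequent inequality $t^*s^*st \le t^*t$ in the natural order on idempotents (which follows from $(t^*s^*st)(t^*t) = t^*s^*st$). The main obstacle I anticipate is precisely this domain identity: though short once written out, it requires a careful interplay between the algebraic definition $D_e = \{x \mid ex = x\}$ and the dynamical description via the partial bijections $L_s$ and $L_t$, and it is here that the inverse-semigroup axioms enter non-trivially.
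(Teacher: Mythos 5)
Your proposal is correct and follows essentially the same route as the paper's proof: check that the $E_{s^*s}$ are closed ideals, that $\alpha_s$ is a $*$-isomorphism onto $E_{ss^*}$ implemented by the partial bijection of left multiplication, and then verify the homomorphism property by matching domains and computing pointwise. The domain identity you single out is exactly the paper's (routinely stated) ideal identity $\alpha_s^{-1}(E_{ss^*}\cap E_{t^*t})=E_{t^*s^*st}$ read at the level of supports, so the only difference is that you spell out its verification via commutativity of idempotents.
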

    \begin{proof}
      The sets $E_{s^*s}$ are clearly (two-sided) closed ideals in $\ell^{\infty}(S)$. Furthermore, since left multiplication by $s^*$ defines a bijection from $D_{ss^*}$ onto $D_{s^*s}$ it follows that $\alpha_s \colon E_{s^*s} \rightarrow E_{ss^*}$ is a *-isomorphism. Moreover it is routine to show that
      \begin{equation}
        \alpha_s^{-1}\left(E_{ss^*} \cap E_{t^*t}\right) = E_{t^*s^*st}. \nonumber
      \end{equation}
      Finally, observe
      \begin{equation}
        \alpha_{ts} f \left(x\right) = f\left(s^*t^*x\right) = \left(\alpha_sf\right)\left(t^*x\right) = \alpha_t\left(\alpha_sf\right)\left(x\right) \nonumber
      \end{equation}
      and, thus, $\alpha_{st} f = \alpha_s \left(\alpha_t f\right)$ for any $f \in E_{t^*s^*st}$.
    \end{proof}

    Since no other action will be considered for crossed products we will denote $\alpha_s f$ simply by $sf$. In order to construct the \textit{reduced crossed product of $\ell^{\infty}(S)$ by $S$} recall that the canonical representation of $\ell^{\infty}(S)$ as multiplication operators in $\ell^2(S)$ is faithful, and consider
    \begin{equation}
      \pi \colon \ell^{\infty}\left(S\right) \rightarrow \mathcal{B}\left(\ell^2\left(S\right) \otimes \ell^2\left(S\right)\right), \quad \big(\pi\left(f\right)\big)\left(\delta_x \otimes \delta_y\right) :=
      \left\{ \begin{array}{lcc}
        f\left(yx\right) \delta_x \otimes \delta_y & \text{if} & x \in D_{y^*y}, \\
        0 &  & \text{otherwise}, 
      \end{array} \right. \nonumber
    \end{equation}
    and
    \begin{equation}
      1 \otimes V \colon S \rightarrow \mathcal{B}\left(\ell^2\left(S\right) \otimes \ell^2\left(S\right)\right), \quad \left(1 \otimes V_s\right) \left(\delta_x \otimes \delta_y\right) := \left\{ \begin{array}{lcc} \delta_x \otimes \delta_{sy} & \text{if} & y \in D_{s^*s}, \\ 0 &  & \text{otherwise,} \end{array} \right. \nonumber
    \end{equation}
    where $\{\delta_x\}_{x \in S}$ denotes the canonical orthogonal basis of $\ell^2(S)$. It follows from straightforward computations that $\pi$ and $1 \otimes V$ are faithful *-representations of $\ell^{\infty}(S)$ and $S$, respectively. Observe the representations intertwine the action in the following covariant way
    \begin{equation}
      \left(1 \otimes V_s\right) \pi\left(f\right) \left(1 \otimes V_s\right)^* = \pi\left(sf\right) \nonumber
    \end{equation}
    for all $s \in S$ and $f \in E_{s^*s}$. The \textit{reduced crossed product} $\ell^{\infty}(S) \rtimes_r S$ is then the C*-algebra generated by the images of $\pi$ and $1 \otimes V$, that is:
    \begin{equation}
      \ell^{\infty}\left(S\right) \rtimes_r S := C^*\Big(\pi\left(\ell^{\infty}\left(S\right)\right) \cdot \left\{1 \otimes V_s \mid s \in S\right\}\Big) \subset \mathcal{B}\left(\ell^2\left(S\right) \otimes \ell^2\left(S\right)\right). \nonumber
    \end{equation}
    The following result relates this construction to the algebra $\mathcal{R}_S$, and generalizes a standard result for groups (see~\cite[Proposition~5.1.3]{BO08}).
    \begin{theorem}
      Let $S$ be a countable and discrete inverse semigroup and consider the action of $S$ on $\ell^{\infty}(S)$ defined in Proposition~\ref{prop:act}. Then the C*-algebra $\mathcal{R}_S$ and the reduced crossed product are isomorphic, i.e., 
      \begin{equation}
       \mathcal{R}_S \cong \ell^{\infty}\left(S\right) \rtimes_r S \,. \nonumber
      \end{equation}
    \end{theorem}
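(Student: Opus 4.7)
The plan is to produce an explicit *-isomorphism $\Phi\colon \ell^\infty(S)\rtimes_r S\to \mathcal{R}_S$ extending the natural assignment $\pi(f)(1\otimes V_s)\mapsto M_f V_s$ on generators, where $M\colon\ell^\infty(S)\hookrightarrow\mathcal{B}(\ell^2(S))$ is the diagonal multiplication representation, and to establish its isometricity via a Fell-absorption-style argument.

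First, I would check that the pair $(M,V)$ on $\ell^2(S)$ is covariant for the action $\alpha$ of Proposition~\ref{prop:act}: a direct computation mirroring the one carried out for $(\pi,1\otimes V)$ immediately before the theorem yields $V_s M_f V_s^{*}=M_{\alpha_s f}$ for every $f\in E_{s^*s}$. Together with the multiplicativity $V_s V_t = V_{st}$, this covariance ensures that the prescription $\pi(f)(1\otimes V_s)\mapsto M_f V_s$ extends to a well-defined, surjective *-algebra homomorphism $\Phi_0$ between the canonical dense *-subalgebras.

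The crux is showing that $\Phi_0$ is isometric. I would introduce the partial isometry
\[
W\in\mathcal{B}(\ell^2(S)\otimes\ell^2(S)), \qquad W(\delta_x\otimes\delta_y)=\begin{cases} \delta_{yx}\otimes\delta_y, & x\in D_{y^*y},\\ 0, & \text{otherwise}.\end{cases}
\]
A direct calculation --- using the inverse-semigroup identities $yy^*y=y$ and the equivalence $y\in D_{s^*s}\iff yy^*\le s^*s$, which in particular yields the implication $z\in D_{yy^*}$ and $y\in D_{s^*s}\Rightarrow z\in D_{s^*s}$ --- shows that $W^*W=\pi(\mathbf{1})$, that $WW^*$ is the projection onto $\overline{\mathrm{span}}\{\delta_z\otimes\delta_y : z\in D_{yy^*}\}$, and that
\[
W\pi(f)W^{*}=WW^{*}\cdot(M_f\otimes 1), \qquad W(1\otimes V_s)W^{*}=WW^{*}\cdot(V_s\otimes V_s).
\]
Since every element of $\ell^\infty(S)\rtimes_r S$ annihilates $\pi(\mathbf{1})^\perp(\ell^2(S)\otimes\ell^2(S))$, $W$ restricts to a unitary intertwiner between the faithful representation of $\ell^\infty(S)\rtimes_r S$ on $\pi(\mathbf{1})(\ell^2(S)\otimes\ell^2(S))$ and the C*-subalgebra of $\mathcal{B}(WW^*(\ell^2(S)\otimes\ell^2(S)))$ generated by the compressions of $M_f\otimes 1$ and $V_s\otimes V_s$.

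A second Fell-absorption step, using the partial isometry $U(\delta_x\otimes\delta_y)=\delta_x\otimes\delta_{xy}$ for $y\in D_{x^*x}$ (zero otherwise), identifies this compressed algebra with $\mathcal{R}_S\otimes 1$ by conjugating $V_s\otimes 1$ to $V_s\otimes V_s$ modulo the analogous compression. Composing the two identifications shows $\Phi_0$ is isometric, and hence extends to the desired *-isomorphism. The principal technical obstacle --- absent in the group case where the analogous $W$ is a genuine unitary --- is the careful management of the source and range projections of $W$, $U$, and the $V_s$: verifying that the successive compressions lose no information requires tracking the inclusions among the idempotents $D_{s^*s}$, $D_{yy^*}$, and their products via the partial order on $E(S)$.
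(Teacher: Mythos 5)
Your first step is essentially sound: for your $W$ one checks that $W^*W=\pi(\mathbf{1})$, that $\pi(\mathbf{1})$ is a two-sided unit for $\ell^\infty(S)\rtimes_r S$, that $WW^*$ commutes with $M_f\otimes 1$ and $V_s\otimes V_s$, and that the stated intertwining relations hold; hence $\mathrm{Ad}(W)$ maps the reduced crossed product isometrically onto the corner cut by $WW^*$ of the ``diagonal'' algebra $\mathcal{D}:=C^*\bigl(\{M_fV_s\otimes V_s\}\bigr)$, a corner supported on the pairs $(z,y)$ with $zz^*\le yy^*$. The genuine gap is in the second step and the final composition. Your $U$ has final projection $UU^*$ supported on the pairs $(x,w)$ with the \emph{opposite} domination $ww^*\le xx^*$. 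Even granting that $\mathrm{Ad}(U)$ is a well-defined isometric $*$-homomorphism on $\mathcal{R}_S\otimes 1$ (it is: $U^*U$ commutes with $\mathcal{R}_S\otimes 1$, and cutting by $U^*U$ is isometric, as one sees on vectors $\eta\otimes\delta_e$ with $\eta$ finitely supported and $e$ the product of the idempotents $x^*x$ over $x\in\mathrm{supp}\,\eta$), it identifies $\mathcal{R}_S$ with the corner $\mathcal{D}\,UU^*$, not with the corner $\mathcal{D}\,WW^*$ produced in step one. So ``composing the two identifications'' does not typecheck: you would still need to prove that the generator-matching map between the corners $\mathcal{D}\,WW^*$ and $\mathcal{D}\,UU^*$ is isometric (equivalently, that cutting $\mathcal{D}$ by these projections does not decrease norms), and nothing in your argument addresses this; it is essentially the remaining content of the theorem. (Incidentally, covariance alone does not make $\Phi_0$ well defined on the \emph{reduced} crossed product, which has no universal property, so the whole burden does fall on the spatial argument.)

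The paper sidesteps the second step by a better choice of intertwiner: $W(\delta_x\otimes\delta_y)=\delta_x\otimes\delta_{yx}$ when $xx^*=y^*y$ and $0$ otherwise. This $W$ commutes with $1\otimes V_s$ and satisfies $W\pi(f)W^*=(1\otimes f)WW^*$, so a single conjugation carries $\ell^\infty(S)\rtimes_r S$ onto $(1\otimes\mathcal{R}_S)\,WW^*$, and injectivity of cutting by $WW^*$ on $1\otimes\mathcal{R}_S$ is immediate (evaluate on vectors $\delta_x\otimes\delta_x$). To salvage your scheme you would have to either prove directly that the compressions by $WW^*$ and $UU^*$ are isometric on $\mathcal{D}$, or replace $U$ by an intertwiner whose final projection is exactly $WW^*$; as written, the proof is incomplete.
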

    \begin{proof}
      Consider the bounded linear operator $W \colon \ell^2\left(S\right) \otimes \ell^2\left(S\right) \rightarrow \ell^2\left(S\right) \otimes \ell^2\left(S\right)$ given by
      \begin{equation}
        W \left(\delta_x \otimes \delta_y\right) = \left\{
                  \begin{array}{lcc}
                    \delta_x \otimes \delta_{yx} & \text{if} & xx^* = y^*y \nonumber \\
                    0 &  & \text{otherwise}.
                  \end{array} \right.
      \end{equation}
      It can be checked that $W$ is a partial isometry whose adjoint is
      \begin{equation}
        W^* \left(\delta_u \otimes \delta_v\right) = \left\{
                  \begin{array}{lcc}
                    \delta_u \otimes \delta_{vu^*} & \text{if} & u^*u = v^*v \nonumber \\
                    0 &  & \text{otherwise}.
                  \end{array} \right.
      \end{equation}
      Moreover, its initial projection $W^*W$ is the orthogonal projection onto the subspace generated by $\delta_x \otimes \delta_y$ where $xx^* = y^*y$, and its final projection $WW^*$ projects onto the subspace generated by $\delta_u \otimes \delta_v$ where $u^*u = v^*v$. In addition, it is routine to check that
      \begin{equation}
        W \pi\left(f\right) W^* = \left(1 \otimes f\right) WW^* = WW^* \left(1 \otimes f\right) \quad \text{and} \quad W \left(1 \otimes V_s\right) = \left(1 \otimes V_s\right) W. \nonumber
      \end{equation}
      It follows that the map $\text{Ad}(W)$ restricts to a *-isomorphism between $\ell^{\infty}(S) \rtimes_r S$ and $(1 \otimes \mathcal{R}_S) \subset 1 \otimes \mathcal{B}(\ell^2(S))$. Indeed, from the commutation relations above we have that
      \begin{equation}
        W \big(\ell^{\infty}\left(S\right) \rtimes_r S\big) W^* = {\text{cl}}_{\|\cdot\|} \left( \text{span} \Big\{1 \otimes f V_s \; \mid \; f \in \ell^{\infty}\left(S\right) \; \text{and} \; s \in S\Big\} \right) \, \cdot \, WW^* = \big(1 \otimes \mathcal{R}_S\big) WW^*, \nonumber
      \end{equation}
      which, in turn, is $*$-isomorphic to $1 \otimes \mathcal{R}_S$.
    \end{proof}

  \section{Inverse semigroups and graphs}\label{sec_mtrspc}
  In this section we will give a third characterization of $\mathcal{R}_S$ as a uniform Roe algebra over a metric space naturally associated to a countable inverse semigroup $S$. Concretely, we will show in Theorem~\ref{thm_mtrspc} when $\mathcal{R}_S \cong C_u^*(\Lambda_S)$, where $\Lambda_S$ is an undirected graph (to be defined below) endowed with the path metric.
  
  We begin recalling the construction and properties of the uniform Roe algebra $C_u^*(X, d)$ of a metric space $(X, d)$. We refer to \cite{R03,BO08,NY12,SW13,BFV20} for proofs and additional motivation in the context of metric spaces. In \cite{ALLW18R} this class of algebras was generalized to so-called \textit{extended metric spaces} $(X, d)$ where the metric $d \colon X \times X \to [0,\infty]$ is allowed to take the value  $\infty$ (see~\cite[Section~2.1]{ALLW18}). This generalization is crucial for the main result of this section as $\Lambda_S$ splits into a disjoint union of connected components that, necessarily, are pairwise at infinite distance (see Proposition~\ref{pro:extended-needed} below).

  An extended metric space $(X, d)$ is of \textit{bounded geometry} if for every radius $R > 0$ the number of points within the balls of radius $R$ is uniformly bounded, i.e., for every $R > 0$ we have $\sup_{x \in X} |B_R(x)| < \infty$. Given an extended metric space of bounded geometry $(X, d)$, the {\em propagation} of a bounded and linear operator $T \in \mathcal{B}(\ell^2(X))$ is defined by
  \begin{align*}
    p(T):=\sup\Big\{d(x,y) \mid x,y\in X \;\; \text{and} \;\; T_{y, x} = \langle \delta_y, T \delta_x \rangle \neq 0\Big\}
  \end{align*}
  and $T$ has \textit{bounded} propagation if $p(T)<\infty$. The uniform Roe algebra $C_u^*(X, d)$ is the C*-algebra generated by the *-algebra $C_{\mathrm{u,alg}}^*(X, d)$ of operators with bounded propagation. The next proposition shows the need to consider extended metrics on $S$ if one wants to realize the equality $\mathcal{R}_S \cong C_u^*(\Lambda_S)$. In fact, the following result suggests that any pair $x, y \in S$ such that $x^*x \neq y^*y$ must be at infinite distance.
  \begin{proposition}\label{pro:extended-needed}
    Let $S$ be a countable inverse semigroup. Consider the matrix units 
    \begin{equation}
      M_{x, y} \colon \ell^2\left(S\right) \rightarrow \ell^2\left(S\right), \quad \text{where} \;\; x, y \in S \;\; \text{and} \;\;
        M_{x,y}\left(\delta_z\right) :=
          \left\{
            \begin{array}{lc}
              \delta_y & \text{if} \;\; z = x \\
              0 & \text{otherwise}.
            \end{array}
          \right. \nonumber
    \end{equation}
    Let $d \colon S \rightarrow [0, \infty]$ be an extended metric on $S$. For any $x, y \in S$ the following hold:
    \begin{enumerate}
      \item \label{prop:ext_metric:roe} If $d(x, y) < \infty$ then $M_{x, y} \in C_u^*(S, d)$.
      \item \label{prop:ext_metric:rs} If $x^*x \neq y^*y$ then $M_{x, y} \not\in \mathcal{R}_S$.
    \end{enumerate}
  \end{proposition}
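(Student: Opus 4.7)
My plan is to treat the two parts separately, with the second being the substantive one. For part (\ref{prop:ext_metric:roe}), the observation is that $M_{x,y}$ has exactly one nonzero matrix entry, located at position $(w,z)=(y,x)$ with value $1$. Its propagation is therefore $d(y,x)=d(x,y)$, so whenever this quantity is finite, $M_{x,y}$ is a finite-propagation operator and hence lies in $C_{\mathrm{u,alg}}^*(S,d) \subset C_u^*(S,d)$.

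For part (\ref{prop:ext_metric:rs}), the strategy is to exhibit a structural invariant shared by every element of $\mathcal{R}_S$ but not by $M_{x,y}$ when $x^*x\neq y^*y$. Concretely, I will show that for every $T\in\mathcal{R}_S$ the matrix entry $T_{w,z}:=\langle\delta_w,T\delta_z\rangle$ vanishes whenever $w^*w\neq z^*z$, i.e., whenever $w$ and $z$ lie in different $\mathcal{L}$-classes. Once this is established, the claim is immediate: since $(M_{x,y})_{y,x}=1\neq 0$ but $y^*y\neq x^*x$, the operator $M_{x,y}$ cannot belong to $\mathcal{R}_S$.

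I verify the invariant first on the generators and then propagate it through the operations defining $\mathcal{R}_S$ as a C*-algebra. For $f\in\ell^{\infty}(S)$ and $s\in S$ a direct calculation from the definition of $V_s$ gives
\begin{equation*}
(fV_s)_{w,z} = f(sz)\,\delta_{w,sz} \quad \text{when } z\in D_{s^*s},
\end{equation*}
and $0$ otherwise. Thus any nonzero entry requires $w=sz$ with $s^*sz=z$, whence $w^*w = z^*s^*sz = z^*z$. Closure of this property under adjoints and linear combinations is immediate, and closure under products follows from $(TR)_{w,z}=\sum_u T_{w,u}R_{u,z}$ together with the transitivity of the equalities $w^*w=u^*u=z^*z$. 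Finally, since norm convergence forces entrywise convergence of matrix coefficients, the property is preserved under norm limits, so it holds on all of $\mathcal{R}_S$.

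I do not expect any serious difficulty. The only delicate computational point is the use of $s^*sz=z$ for $z\in D_{s^*s}$, which is precisely what ensures the $\mathcal{L}$-class of the source is faithfully transported to the target by $V_s$; this is the single algebraic fact driving the whole argument.
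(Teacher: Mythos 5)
Your proposal is correct, and part (\ref{prop:ext_metric:roe}) is argued exactly as in the paper. For part (\ref{prop:ext_metric:rs}) you take a somewhat different route. The paper works only with the dense $*$-subalgebra $\mathcal{R}_{S,\mathrm{alg}}$ and gives a quantitative estimate: for any finite sum $\sum_{i} f_i V_{s_i}$, evaluating at $\delta_x$ shows that the image vector is supported in the $\mathcal{L}$-class of $x$ (since $s_i x = y$ with $x \in D_{s_i^*s_i}$ would force $y^*y = x^*s_i^*s_i x = x^*x$), so $\lVert M_{x,y} - \sum_i f_i V_{s_i}\rVert \geq 1$ and $M_{x,y}$ is uniformly bounded away from the dense subalgebra. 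You instead establish the structural invariant that every $T \in \mathcal{R}_S$ has vanishing matrix coefficients between distinct $\mathcal{L}$-classes, checking it on the generators $fV_s$ and propagating it through adjoints, sums, products (the expansion $(TR)_{w,z} = \sum_u T_{w,u}R_{u,z}$ converges absolutely by Cauchy--Schwarz, so this step is sound) and norm limits. Both arguments hinge on the same algebraic fact, namely Lemma~\ref{lemma_action_lclasses} that $z \in D_{s^*s}$ forces $z \,\mathcal{L}\, sz$. What your version buys is a stronger, reusable conclusion: $\mathcal{R}_S$ is block-diagonal with respect to the decomposition $\ell^2(S) = \oplus_e \ell^2(L_e)$, i.e., it commutes with every projection $p_{L_e}$ --- a fact the paper uses implicitly later when forming the corners $p_L \mathcal{R}_S p_L$. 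The paper's version is shorter and gives the quantitative statement that $M_{x,y}$ is at distance at least $1$ from $\mathcal{R}_{S,\mathrm{alg}}$, which immediately rules out membership in the closure without needing the closure-under-operations bookkeeping.
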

  \begin{proof}
   (\ref{prop:ext_metric:roe}) follows from the fact that $M_{x, y} \in C_{\mathrm{u,alg}}^*(S, d) \subset C_u^*(S, d)$. For (\ref{prop:ext_metric:rs}), let $x, y \in S$ be such that $x^*x \neq y^*y$. Then the matrix unit $M_{x, y}$ is uniformly bounded away from any linear combination $\sum_{i = 1}^n f_i V_{s_i} \in \mathcal{R}_{S,\mathrm{alg}}$:
    \begin{align}
      \left|\left| M_{x, y} - \sum_{i = 1}^n f_i V_{s_i} \right|\right|^2 & \geq \left|\left| M_{x, y} \left(\delta_{x}\right) - \left(\sum_{i = 1}^n f_i V_{s_i}\right) \left(\delta_{x}\right) \right|\right|^2_{\ell^2\left(S\right)} \nonumber \\
      & = \left|\left|\delta_{y} - \sum_{\substack{i = 1 \\ x \in D_{s_i^*s_i}}}^n f\left(s_ix\right) \delta_{s_ix} \right|\right|^2_{\ell^2\left(S\right)} 
        = 1 + \left|\left|\sum_{\substack{i = 1 \\ x \in D_{s_i^*s_i}}}^n f\left(s_ix\right) \delta_{s_ix}\right|\right|^2_{\ell^2\left(S\right)}    \; \geq \; 1 \,, \nonumber
    \end{align}
    where the second equality follows from the fact that there can be no $s_i$ such that $x \in D_{s_i^*s_i}$ and $s_i x = y$, because otherwise $y^*y = x^* s_i^* s_i x = x^*x$, contradicting the hypothesis.
  \end{proof}

    \subsection{Sch\"{u}tzenberger graphs}\label{sec_dist}
     We will construct in this section a graph $\Lambda_S$, which will be disconnected unless $S$ is a group, that inherits the geometry of the semigroup. The connected components of $\Lambda_S$ are the so-called \textit{left Sch\"{u}tzenberger graphs} associated to each \textit{$\mathcal{L}$-class} of $S$. We begin introducing these notions.

    Consider a fixed symmetric generating set $K = K^* \subset S$  and note that, in general, we do not assume $K$ to be finite. Recall some of the so-called Green's equivalence relations (see, for instance,~\cite{G51,H95,L98}). For $x, y \in S$, we write $x \mathcal{L} y$ if $x^*x = y^*y$. Similarly, we say $x \mathcal{R} y$ if $xx^* = yy^*$. Note that, by definition, in each $\mathcal{L}$-class $L$ there is exactly one projection, namely $x^*x$ for any $x \in L$. Thus we can use the set $E(S)$ to label each class:
    \begin{equation}
      S = \sqcup_{e\in E(S)} L_e \quad \text{where for each $\mathcal{L}$-class we have} \; L_e\cap E(S)=\{e\} \,. \nonumber
    \end{equation}
    The next simple result will be used throughout the rest of the paper without explicit mention.
    \begin{lemma}\label{lemma_action_lclasses}
      Let $S$ be an inverse semigroup and let $s \in S$. Then $x \in D_{s^*s}$ if and only if $x \mathcal{L} sx$.
    \end{lemma}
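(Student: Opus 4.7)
The plan is to unwind both sides into identities in $S$ and then use two facts that have already appeared in the paper: the defining identity $xx^*x=x$ and the commutativity of the projection set $E(S)$.

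First I would translate the two conditions. Recall that $D_{s^*s}=\{x\in S\mid s^*sx=x\}$, so $x\in D_{s^*s}$ is literally the equation $s^*sx=x$. On the other hand, by definition of the $\mathcal{L}$-relation, $x\,\mathcal{L}\,sx$ means $x^*x=(sx)^*(sx)=x^*s^*sx$. So the lemma reduces to the equivalence
\[
 s^*sx=x \iff x^*x=x^*s^*sx.
\]

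The forward direction is immediate: if $s^*sx=x$, then multiplying on the left by $x^*$ gives $x^*s^*sx=x^*x$. For the converse, starting from $x^*x=x^*s^*sx$, I would multiply on the left by $x$ to obtain $xx^*x=xx^*s^*sx$, and then simplify the left-hand side using $xx^*x=x$ (valid in any inverse semigroup). This yields $x=xx^*s^*sx$. The key step is now to swap the two idempotents $xx^*$ and $s^*s$, which is legal because $E(S)$ is commutative; this gives $x=s^*sxx^*x=s^*sx$, as desired.

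The only subtle point — and really the whole content of the lemma — is that the backward implication needs commutativity of idempotents; without it, one could not rearrange $xx^*s^*s$ into $s^*sxx^*$ and conclude. Everything else is a routine unwinding of definitions, so I would keep the write-up short: two displayed lines for each direction, citing commutativity of $E(S)$ explicitly at the place where it is used.
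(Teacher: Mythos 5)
Your proof is correct and follows essentially the same route as the paper: unwind $x\,\mathcal{L}\,sx$ to $x^*x=x^*s^*sx$, multiply on the left by $x$ (resp.\ $x^*$ for the easy direction), and use $xx^*x=x$ together with the commutativity of idempotents to get $x=xx^*s^*sx=s^*sx$. The only difference is cosmetic: you make the use of commutativity of $E(S)$ explicit, which the paper leaves implicit in the step $xx^*s^*sx=s^*sx$.
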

    \begin{proof}
      Assume $x \mathcal{L} sx$. Then $x^*x=x^*s^*sx$ and, hence, multiplying by $x$ from the left we obtain $x=xx^*x=xx^*s^*sx=s^*sx$ which shows $x \in D_{s^*s}$. The reverse is done similarly.
    \end{proof}
    We introduce next the notion of Sch\"utzenberger graph which will play a central role in the rest of the article (see, e.g.,~\cite{Sch57,Sch58,GK13}).
    \begin{definition}
    Let $L$ be an $\mathcal{L}$-class of $S$. The \textit{left Sch\"{u}tzenberger graph} of $L$ is the edge-labeled undirected graph $\Lambda(L, K)$, where $K$ is a fixed symmetric generating set of $S$. Its vertex set is $L$ and two vertices $x, y \in L$ are joined by an edge labeled by $k \in K$ if $kx = y$. 
    We will denote by $\Lambda(S, K)$ the disjoint union of the left Sch\"{u}tzenberger graphs $\Lambda(L_e, K)$, where $e\in E(S)$. That is, the vertex set of $\Lambda(S, K)$ is $S$ and two vertices $x, y \in S$ are joined by an edge labeled by $k \in K$ if and only if $x \mathcal{L} y$ and $kx = y$. 
    \end{definition} 
    
    \begin{remark}
      Observe that for inverse semigroups $\Lambda(L, K)$ is an undirected graph, in the sense that if $x, y \in L$ and $kx = y$ then $y^*y = x^*k^*kx$ and, thus, $k^*y = k^*kx = x$. Therefore there is a $k^*$-labeled edge going from $y$ to $x$.
      Note also that $\Lambda(S, K)$ is not the usual (left) Cayley graph of a semigroup (see, e.g.,~\cite{GK13}). Indeed, observe that the Cayley graph of $S$ is in general a directed graph while $\Lambda(S, K)$ is always undirected. For instance, if $S = \{0, 1\}$ using the product as operation, then the Cayley graph of $S$ with $K=S$ has a directed edge going from $1$ to $0$, while in $\Lambda(S, K)$ the vertices $0$ and $1$ are in different connected components. In fact, from Lemma~\ref{lemma_action_lclasses} it is routine to show that $\Lambda(S, K)$ is the graph resulting from deleting the directed edges in the left Cayley graph of $S$.
    \end{remark}

    Alternatively one could construct the undirected graph $\Sigma(S, K)$, whose connected components are the \textit{right Sch\"{u}tzenberger graphs} $\Sigma(R, K)$ of each $\mathcal{R}$-class $R$. For general semigroups the left and right versions of these graphs need not even be coarsely equivalent, since an arbitrary semigroup could, for instance, have a distinct number of $\mathcal{L}$ and $\mathcal{R}$ classes (cf.,~\cite[Example~1]{GK13}). However, for inverse semigroups these graphs are isomorphic.
    \begin{lemma}\label{lemma_l_classes}
      The graphs $\Lambda(S,K)$ and $\Sigma(S,K)$ are isomorphic (as graphs).
    \end{lemma}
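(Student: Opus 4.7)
The approach is to use the natural involution $* \colon S \to S$, $x \mapsto x^*$, as the vertex map of the desired graph isomorphism $\phi \colon \Lambda(S,K) \to \Sigma(S,K)$. Since $(x^*)^* = x$ in any inverse semigroup, $*$ is a set-theoretic involution and is automatically bijective on vertex sets, and serves as its own inverse.

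The first step is to check that $*$ intertwines $\mathcal{L}$- and $\mathcal{R}$-classes. If $x \mathcal{L} y$, i.e., $x^*x = y^*y$, then $(x^*)(x^*)^* = x^*x = y^*y = (y^*)(y^*)^*$, so $x^* \mathcal{R} y^*$; running the same computation in reverse gives the converse. Hence $\phi$ maps each connected component $\Lambda(L_e, K)$ of $\Lambda(S,K)$ bijectively onto the corresponding component $\Sigma(R_e, K)$ of $\Sigma(S,K)$, with $R_e$ the unique $\mathcal{R}$-class containing $e$.

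The second step is to verify that edges are preserved. Suppose $\{x,y\}$ is a $k$-edge of $\Lambda(S,K)$, meaning $x \mathcal{L} y$ and $kx = y$. Taking adjoints yields $x^* k^* = y^*$; combined with $x^* \mathcal{R} y^*$ from the previous step, this is exactly the defining condition for $\{x^*, y^*\}$ to be a $k^*$-edge of $\Sigma(S,K)$. The assumption $K = K^*$ guarantees $k^* \in K$ is an admissible label, so no edges are lost. The reverse implication (edges of $\Sigma$ pulled back to edges of $\Lambda$) is verified identically, using that $*$ is involutive.

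There is no substantive obstacle: the only point requiring attention is bookkeeping on edge labels, which shift from $k$ to $k^*$ under $\phi$, but this is harmless because the symmetry of $K$ ensures that the label set is preserved. Since the lemma is stated for graphs (without insisting that individual labels be fixed by the isomorphism), combining the vertex bijection with the edge correspondence above gives the required graph isomorphism $\phi \colon \Lambda(S,K) \to \Sigma(S,K)$.
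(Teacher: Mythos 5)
Your proposal is correct and follows essentially the same route as the paper: the involution $x \mapsto x^*$ as vertex bijection, carrying $\mathcal{L}$-classes onto $\mathcal{R}$-classes, with adjacency preserved because $kx = y$ iff $x^*k^* = y^*$ and $K = K^*$. The only difference is that you spell out the bookkeeping (component correspondence and label relabeling $k \mapsto k^*$) which the paper leaves implicit.
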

    \begin{proof}
      The isomorphism of graphs is given by the involution map, which is clearly bijective and maps $\mathcal{L}$-classes onto $\mathcal{R}$-classes. In addition, since the generator set is symmetric, we have that adjacency relations of the graphs are also preserved, since $y = kx$ if and only if $y^* = x^* k^*$.
    \end{proof}

    The preceding lemma justifies that we only need to consider left Sch\"{u}tzenberger graphs. For simplicity, we will omit the term \textit{left} from now on. Furthermore, if the generating set $K$ is clear from the context we will denote $\Lambda(S,K)$ just as $\Lambda_S$. Similarly, for any $\mathcal{L}$-class $L \subset S$ we will often write its Sch\"{u}tzenberger graph simply by $\Lambda_L$. We will also refer in some situations (specially in Section~\ref{sec_qiinv}) to the inverse semigroup as a metric space, by which we mean the corresponding disjoint union of Sch\"utzenberger graphs with the usual path length metric.
    \begin{remark}\label{rm_infedges}
      Observe that, contrary to the group case, infinitely many edges might connect two vertices $x, y \in S$. Indeed, let $S := G \times \mathbb{N}$, where $G$ is a discrete and countable group with at least two elements and generated by $K$. The semigroup operation is given by
      \begin{equation}
        \left(g, n\right) \cdot \left(h, m\right) := \left(gh, \min\left\{n, m\right\}\right). \nonumber
      \end{equation}
      Then $S$ is an inverse semigroup generated by $K\times\mathbb{N}$ with $(g,n)^*:=(g^{-1},n)$. Any $(g, 1), (kg, 1) \in G \times \{1\}$ are connected by infinitely many edges of the form $(k, m) \in K \times \mathbb{N}$. Likewise, any pair $(g, p), (kg, p) \in S$ are connected by infinitely many edges, labeled by $(k, q)$, where $q \geq p$.
    \end{remark}

    \begin{example}\label{ex_dist_poly}
      Consider the bicyclic semigroup $\mathcal{P}_1 = \langle a, a^* \mid a^*a = 1\rangle \sqcup \{0\}$ (see Example~\ref{ex_int_poly}). Fix the canonical symmetric generating set 
      $K := \{a, a^*, 0\}$ and note that any non-zero elements of $\mathcal{P}_1$ are of the form $a^i {a^*}^j$, for some $i, j \in \mathbb{N}$, of which exactly those of the form $a^i{a^*}^i$ are idempotents. Given $i, j, p, q \in \mathbb{N}$ observe that $a^i {a^*}^j \mathcal{L} a^p {a^*}^q$ if and only if $j = q$ and, thus, the map
      \begin{equation}
        \phi_j \colon \left\{a^i {a^*}^j \; \mid \; i \in \mathbb{N}\right\} \rightarrow \mathbb{N}, \quadtext{with} \phi_j\Big(a^i {a^*}^j\Big) := i \;.\nonumber
      \end{equation}
      is a bijection. The graph $\Lambda_{\mathcal{P}_1}$ is the disjoint union of copies of the usual Cayley graph of $\mathbb{N}$ (see Figure~\ref{fig_p1}), with an extra isolated component corresponding to the element $0 \in \mathcal{P}_1$. In general, the left Sch\"{u}tzenberger graph of an $\mathcal{L}$-class of $\mathcal{P}_n$ is the $n$-ary complete rooted tree, unless it is the $\mathcal{L}$-class of $0$, in which case the component is an isolated point.
      \begin{figure}[ht]
        \centering
        \begin{tikzpicture}
          \tikzset{invsem/.style={circle,minimum size=20pt}}
          \def \base {4} 
          \def \height {4} 
          \def \margin {0.25}

          \node [invsem] (0:0) at (0,0) {$1$};
          \node [invsem] (0:1) at (1,0) {$a^*$};
          \foreach \j in {2,...,\base} { \node [invsem] (0:\j) at (\j,0) {${a^*}^\j$}; }
          \node [invsem] (1:0) at (0,1) {$a$};
          \node [invsem] (1:1) at (1,1) {$aa^*$};
          \foreach \j in {2,...,\base} { \node [invsem] (1:\j) at (\j,1) {$a{a^*}^\j$}; }
          
          \foreach \i in {2,...,\height} { \node [invsem] (\i:0) at (0,\i) {$a^\i$}; }
          \foreach \i in {2,...,\height} { \node [invsem] (\i:1) at (1,\i) {$a^\i a^*$}; }

          \foreach \i in {2,...,\height} { \foreach \j in {2,...,\base} { \node [invsem] (\i:\j) at (\j,\i) {$a^\i {a^*}^\j$}; } }

          \foreach \j in {0,...,\base} { \node (dots:\j) at (\j,\base+1) {$\vdots$}; }
          \foreach \i in {0,...,\height} { \node [invsem] (\i:dots) at (\base+1,\i) {\dots}; }
          \node [invsem] (ddots) at (\base+1,\height+1) {$\udots$};
          \node [invsem] (zero) at (-1,0) {$0$};

          \foreach \j in {0,...,\base}
          {
            \foreach \i in {1,...,\height} { \draw (\j,\i-1+\margin) -- (\j,\i-\margin); }
            \draw (\j,\height+\margin) -- (\j,\height+1-\margin);
          }
        \end{tikzpicture}
        \caption{Left-Schützenberger graphs of the semigroup $\mathcal{P}_1 = \langle a, a^* \mid a^*a = 1\rangle \sqcup \left\{0\right\}$. Any two lines are at infinite distance from each other.}
        \label{fig_p1}
      \end{figure}
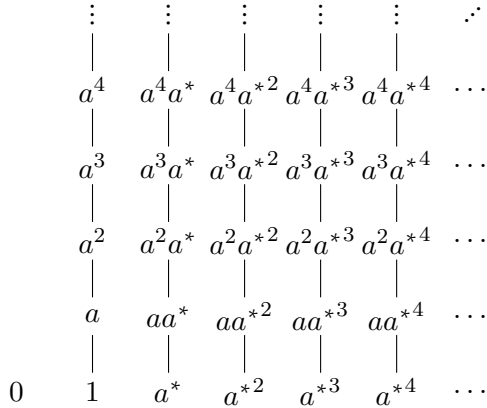
    \end{example}

    As usual, we consider the graph $\Lambda_S=\sqcup_{e\in E(S)}\Lambda_{L_e}$, where $\Lambda_{L_e}$ is equipped with the path distance between the vertices. We complete this section stating some facts about the graph $\Lambda_S$.
    \begin{proposition}\label{prop_dist_bndgeo}
      Let $S$ be a finitely generated inverse semigroup. Then $\Lambda_S$ is of bounded geometry.
    \end{proposition}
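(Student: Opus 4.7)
The plan is to verify directly the definition of bounded geometry recalled in Section~\ref{sec_mtrspc}, namely that $\sup_{x \in S} |B_R(x)| < \infty$ for every radius $R > 0$. Since $S$ is finitely generated, I may fix a finite symmetric generating set $K \subset S$, and the Sch\"utzenberger graph $\Lambda_S = \Lambda(S, K)$ is built with respect to this $K$. The crucial point, in contrast with Remark~\ref{rm_infedges}, is to distinguish the number of \emph{neighbors} of a vertex from the number of edges incident to it: bounded geometry is a statement about cardinalities of vertex sets.

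First, I would bound the $1$-ball at an arbitrary $x \in S$. By the definition of $\Lambda_S$, a vertex $y \neq x$ is adjacent to $x$ if and only if there exists $k \in K$ with $x \in D_{k^*k}$ and $kx = y$; moreover, by Lemma~\ref{lemma_action_lclasses}, in this case $y \mathcal{L} x$ so $y$ lies in the same $\mathcal{L}$-class as $x$. For each $k \in K$ the element $kx$ is uniquely determined (or not defined), so the assignment $k \mapsto kx$ surjects onto the set of neighbors of $x$. Hence
\begin{equation}
  |B_1(x)| \;\leq\; 1 + |K|\,, \nonumber
\end{equation}
with the bound being uniform in $x \in S$ and finite by assumption.

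Next, I would proceed by induction on $R$, using the triangle-like inclusion $B_R(x) \subseteq \bigcup_{z \in B_{R-1}(x)} B_1(z)$, which gives
\begin{equation}
  |B_R(x)| \;\leq\; (1 + |K|) \cdot |B_{R-1}(x)| \;\leq\; (1 + |K|)^R\,, \nonumber
\end{equation}
again uniformly in $x$. Taking the supremum over $x \in S$ yields bounded geometry.

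There is really no substantial obstacle in this argument; the only subtlety to keep in mind is the one highlighted by Remark~\ref{rm_infedges}, namely that multiple edges between the same pair of vertices are allowed. Finiteness of $K$ bounds the out-degree (as a simple graph) of every vertex, which is what is actually needed for bounded geometry, so the multiplicity of edges plays no role in the estimate.
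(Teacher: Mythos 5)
Your argument is correct and is essentially the paper's proof: the paper simply states the uniform estimate $|B_R(x)| \leq |K|^R$ coming from the finiteness of the generating set, which is exactly the bound you obtain (in the mildly more careful form $(1+|K|)^R$) by bounding the $1$-ball and inducting on $R$. Your remark that multiple edges are irrelevant because bounded geometry only counts vertices is a sound clarification, but it does not change the approach.
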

    \begin{proof}
      Denoting by $K$ the finite and symmetric generating set we have the uniform estimate $|B_R(x)| \leq |K|^R$.
    \end{proof}

    Our setting is that of semigroups which are countable but not necessarily finitely generated. We do, however, require $\Lambda_S$ to be of bounded geometry, which is an important condition to define the uniform Roe algebra $C_u^*(\Lambda_S)$ and study property A in the following section. The next result shows that our setting reduces to a common one in the case of groups. Recall that the Cayley graph of a group $G$ is bounded geometry if and only if $G$ is finitely generated.
    \begin{proposition}\label{prop_bddgeom}
      Let $S$ be an inverse semigroup. If $\Lambda_S$ is of bounded geometry, then so is the left Cayley graph of the maximal homomorphic image $G(S)$.
    \end{proposition}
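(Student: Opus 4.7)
The plan is to argue by contraposition. Suppose the Cayley graph $\mathrm{Cay}(G(S))$, taken with respect to the natural generating set $\sigma(K)$, fails to be of bounded geometry; I will then produce, for every $n\in\mathbb{N}$, a vertex of $\Lambda_S$ whose $1$-ball contains at least $n+2$ distinct points, contradicting the bounded geometry hypothesis on $\Lambda_S$.

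Since $G(S)$ is a group, left multiplication acts by graph automorphisms on $\mathrm{Cay}(G(S))$, so the failure of bounded geometry reduces to the infiniteness of $B_1(1_{G(S)})$, i.e., of the set $\sigma(K)\setminus\{1_{G(S)}\}$. Thus, for each $n$, one can pick $k_0,\dots,k_n\in K$ whose images $\sigma(k_0),\dots,\sigma(k_n)$ in $G(S)$ are pairwise distinct and all non-trivial. The idea is then to build a single vertex $e\in S$ that simultaneously ``sees'' all of these generators as neighbors in $\Lambda_S$, by intersecting their domains. Concretely, set
\[
 e := (k_0^*k_0)\,(k_1^*k_1)\cdots(k_n^*k_n)\;\in\; E(S).
\]
Because $E(S)$ is a commutative subsemigroup, $(k_i^*k_i)\,e=e$ for each $i$, hence $e\in D_{k_i^*k_i}$; by Lemma~\ref{lemma_action_lclasses} this yields that $k_i\cdot e\in L_e$ and is joined to $e$ by the $k_i$-labeled edge of $\Lambda_S$.

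The final step is the distinctness of the $n+2$ vertices $e,k_0e,k_1e,\dots,k_ne$. The key observation is that $\sigma(e)=1_{G(S)}$, since $e\in E(S)$. Therefore, applying the homomorphism $\sigma$ to any identity $k_i e=k_j e$ in $S$ forces $\sigma(k_i)=\sigma(k_j)$ in the group $G(S)$, contradicting our choice of the $k_i$'s; analogously, $k_ie=e$ would give $\sigma(k_i)=1_{G(S)}$. Consequently $|B_1(e)|\geq n+2$, and since $n$ is arbitrary this contradicts bounded geometry of $\Lambda_S$.

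I do not expect a serious obstacle: the argument is a short counting combined with the elementary fact that $\sigma$ is a homomorphism into a group, and the commutativity of $E(S)$ makes the domain intersection trivial. The only minor subtlety to keep in mind is the degenerate case where $G(S)$ is trivial (e.g., whenever $S$ contains a zero, so that $s\sigma t$ for all $s,t$ via the projection $0$): there $\mathrm{Cay}(G(S))$ is a single point and the conclusion is automatic, so the contrapositive argument need only be run when $G(S)$ is non-trivial.
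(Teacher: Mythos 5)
Your proof is correct, but it takes a genuinely leaner route than the paper's. The paper fixes an arbitrary radius $R$, takes $m+1$ points in an $R$-ball of $G(S)$, lifts the connecting geodesics to elements $s_2,\dots,s_{m+1}\in S$ with $\sigma(x_i)=\sigma(s_ix_1)$, and transports the whole configuration into a single $\mathcal{L}$-class by multiplying with the idempotent $e=x_1^*s_2^*s_2x_1\cdots x_1^*s_{m+1}^*s_{m+1}x_1$; distinctness of the resulting points is then checked with $\sigma$, exactly as you do. You instead exploit the homogeneity of a Cayley graph to reduce failure of bounded geometry of $G(S)$ to the infiniteness of $\sigma(K)$, so that only the radius-$1$ picture at the idempotent $e=(k_0^*k_0)\cdots(k_n^*k_n)$ has to be realized in $\Lambda_S$, which the commutativity of $E(S)$ and Lemma~\ref{lemma_action_lclasses} give for free --- no auxiliary points $x_i$ or conjugated idempotents are needed. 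The trade-off is quantitative: the paper's argument shows that for every $R$ the same constant $\sup_{x\in S}|B_R(x)|$ bounds the $R$-balls of $G(S)$, a refinement the paper uses later (see the comment after Proposition~\ref{prop_euni_bddgeom}), whereas your contrapositive yields only the qualitative statement (a bound on $1$-balls of $G(S)$, with the exponential estimate for larger radii). One cosmetic slip: for the left Cayley graph, whose edges join $g$ and $\sigma(k)g$, it is right rather than left multiplication that acts by graph automorphisms; vertex-transitivity, which is all you actually use, holds regardless.
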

    \begin{proof}
      Given $R > 0$, since $\Lambda_S$ has bounded geometry, we have that
      \begin{equation}
       m := \sup_{x \in S} \left| B_R(x) \right| < \infty \,. \nonumber
      \end{equation}
      We claim that $m$ also uniformly bounds the cardinality of the $R$-balls in the Cayley graph of $G(S)$, thus proving that $G(S)$ has bounded geometry. Indeed, assume that this is not the case. Consider $\sigma \colon S \rightarrow G(S)$ and let $B_R(\sigma(x_1))$ be an $R$-neighborhood (with respect to the path distance) of some $\sigma(x_1) \in G(S)$ having at least $m+1$ different points, i.e.,
      \begin{equation}
        \left\{\sigma\left(x_1\right), \dots, \sigma\left(x_{m + 1}\right)\right\} \subset B_R\left(\sigma\left(x_1\right)\right). \nonumber
      \end{equation}
      Any point $\sigma(x_i)$ is connected with $\sigma(x_1)$ by a geodesic path $\sigma(s_i) \in G(S)$ of length at most $R$, and in particular $\sigma(x_i) = \sigma(s_i x_1)$ for every $i = 2, \dots, m+1$. Consider then the idempotent given by
      \begin{equation}
        e \, := \, x_1^* s_2^*s_2 x_1 \, \dots \, x_1^* s_{m+1}^* s_{m+1} x_1, \nonumber
      \end{equation}
      and let $y_1 := x_1 e$ and $y_i := s_i y_1$, where $i = 2, \dots, m+1$. We claim that the points $\{y_i\}_{i = 1}^{m + 1}$ are pairwise different and within distance $R$ of $y_1$, thus proving that $m$ is not a bound on the cardinality of the $R$-balls of $S$ and contradicting the hypothesis. Indeed, first note that, when $i \neq j$, it follows that $y_i \neq y_j$ since $\sigma(y_i) = \sigma(s_ix_1) = \sigma(x_i) \neq \sigma(x_j) = \sigma(s_j x_1) = \sigma(y_j)$. Secondly, observe 
      \begin{equation}
        s_i^* s_i y_1 = s_i^* s_i x_1 e = s_i^* s_i x_1 x_1^* s_i^* s_i x_1 \, e = x_1 x_1^* s_i^* s_i x_1 e = x_1 e = y_1 \nonumber
      \end{equation}
      and hence $y_1 \in D_{s^*_is_i}$ for every $i = 2, \dots, m+1$, proving that $y_1, \dots, y_{m+1}$ are all $\mathcal{L}$-related (cf., Lemma~\ref{lemma_action_lclasses}). Finally, the points $y_1$ and $y_i$ are connected by a path of length less than $R$ by construction, since $s_i y_1 = y_i$, proving the claim.
    \end{proof}

    \begin{remark}
      The difficulty of the preceding proof is the fact that the elements $x_1, \dots, x_{m+1} \in S$ need not be $\mathcal{L}$-related, that is, they might sit in different Sch\"{u}tzenberger classes of $S$. Moreover, the edges connecting $\sigma(x_1), \dots, \sigma(x_{m+1})$ in $G(S)$ need not be present in a certain $\mathcal{L}$-class, and thus we have to move the point $x_1 \in S$ via multiplication with a suitable projection $e$ in order to replicate those edges in a certain $\mathcal{L}$-class $L_e \subset S$. Therefore, what the proof above actually says is that the local structure of $G(S)$, i.e., a certain $R$-ball $B_R(\sigma(x_1)) \subset G(S)$, may be seen in a $\mathcal{L}$-class $L_e \subset S$ provided that $e \in E(S)$ is sufficiently small. In particular, the left Cayley graph of $G(S)$ naturally is the inductive limit of the Sch\"{u}tzenberger graphs of $S$ (see also the proof of Proposition~\ref{prop_sa_gsa}).
    \end{remark}
    
    Recall that not every $K$-labeled graph of bounded geometry is the left Cayley graph of a group generated by the set $K$ (e.g., the Petersen graph). The following proposition shows that a large class of graphs can be realized as Sch\"utzenberger graphs. We would like to thank N\'{o}ra Szak\'{a}cs for pointing out the proof of the following proposition.
    \begin{proposition}\label{prop:lclasses}
      Let $\mathcal{G} = (V, E)$ be a non-empty, connected and undirected graph without multiple edges and suppose that every vertex is connected with itself via a loop. Then there is an inverse semigroup $S$ and an $\mathcal{L}$-class $L \subset S$ such that $\Lambda_L$ and $\mathcal{G}$ are isomorphic as graphs.
    \end{proposition}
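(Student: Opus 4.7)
The plan is to realize $\mathcal{G}$ as a Schützenberger graph by constructing $S$ concretely as partial bijections on $V$. Let $I(V)$ denote the symmetric inverse monoid of all partial bijections of $V$. For each edge $e$ of $\mathcal{G}$ with endpoints $u, v \in V$ (allowing $u = v$ for loops), define $\tau_e \in I(V)$ to be the partial involution with domain $\{u, v\}$ that swaps $u$ and $v$ (reducing to the identity on $\{v\}$ when $e$ is a loop at $v$). Set $K := \{\tau_e \mid e \in E\}$, which is symmetric since each $\tau_e$ is self-inverse, and let $S$ be the inverse subsemigroup of $I(V)$ generated by $K$.

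Fix a basepoint $v_0 \in V$ and let $\iota_{v_0} \in K$ be the partial identity on $\{v_0\}$, which belongs to $K$ by the loop hypothesis. Let $L := L_{\iota_{v_0}}$ be the $\mathcal{L}$-class of $\iota_{v_0}$ in $S$. Since every $s \in S$ is a partial bijection, the condition $s^*s = \iota_{v_0}$ forces the domain of $s$ to be $\{v_0\}$, so $s$ is completely determined by $v := s(v_0) \in V$; write $\sigma_v \colon \{v_0\} \to \{v\}$ for this element. The first technical step is to show $\sigma_v \in L$ for \emph{every} $v \in V$: by connectedness pick a path $v_0 = u_0, u_1, \ldots, u_n = v$ in $\mathcal{G}$ and let $e_i := \{u_{i-1}, u_i\}$; then a direct unwinding of compositions gives $\tau_{e_n} \cdots \tau_{e_1} \iota_{v_0} = \sigma_v$. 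Conversely, every element of $L$ is of this form, so the map $\Phi \colon V \to L$, $v \mapsto \sigma_v$, is a bijection.

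It remains to verify that $\Phi$ is an isomorphism of graphs. An edge joins $\sigma_u$ to $\sigma_v$ in $\Lambda_L$ if and only if there is some $k \in K$ with $k \sigma_u = \sigma_v$, which by the choice of $K$ amounts to $\tau_e(u) = v$ for some $e \in E$, i.e., $e = \{u, v\} \in E$. The ``no multiple edges'' hypothesis enters precisely at this point: it guarantees that at most one $\tau_e$ implements each such edge, so the edge sets of $\Lambda_L$ and $\mathcal{G}$ match exactly (and loops at $v$ in $\mathcal{G}$ are mirrored by loops at $\sigma_v$ in $\Lambda_L$ via the idempotents $\iota_v$). The main potential obstacle is ruling out extraneous elements in $L$ beyond the $\sigma_v$, but this is immediate from the domain condition $s^*s = \iota_{v_0}$ inside $I(V)$; everything else reduces to mechanical checks with compositions of partial bijections.
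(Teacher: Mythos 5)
Your argument is correct, but it takes a genuinely different route from the paper's. The paper constructs $S$ abstractly by a presentation: generators $V \sqcup E \sqcup E^* \sqcup \{0\}$ subject to incidence relations together with the relation collapsing every cycle at the basepoint $v_0$ to the idempotent $v_0^*v_0$ (in the spirit of \cite{S90,MS15}); the $\mathcal{L}$-class of $v_0$ is then identified with paths issuing from $v_0$, where the cycle relations force two paths with the same endpoints to coincide. You instead realize $S$ concretely inside the symmetric inverse monoid $I(V)$, generated by the edge involutions $\tau_e$, and take the $\mathcal{L}$-class of the partial identity $\iota_{v_0}$, which is itself a generator thanks to the loop hypothesis. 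Your key computations are all sound: $s^*s=\iota_{v_0}$ forces the domain of $s$ to be $\{v_0\}$, connectedness shows every $\sigma_v$ lies in $L$ via $\tau_{e_n}\cdots\tau_{e_1}\iota_{v_0}=\sigma_v$, and $\tau_e\sigma_u=\sigma_v$ holds exactly when $e=\{u,v\}\in E$ (with loops of $\mathcal{G}$ matched by the idempotent generators $\iota_v$ and by nothing else), so edges correspond bijectively; your observation that parallel edges of $\mathcal{G}$ would yield the \emph{same} generator $\tau_e$ is precisely where the no-multiple-edges hypothesis is needed in your setting. What your approach buys is concreteness: every verification is a mechanical computation with partial bijections, with no need to control normal forms or the structure of a presented inverse semigroup, which the paper's sketch leaves implicit. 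What the paper's approach buys is the labeling: since the edges of $\mathcal{G}$ stay distinct as abstract generators, the isomorphism there respects a prescribed deterministic $K$-labeling (as exploited in the remark following the proposition), whereas your construction collapses labels to the underlying partial bijections and therefore yields only the unlabeled statement --- which is, however, all that the proposition asserts.
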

    \begin{proof}
      We will only sketch the main ideas of the proof (see also~\cite{S90,MS15}). It is useful for the construction to think of the undirected edges of $\mathcal{G}$ 
      as a pair of edges with opposite orientations. We will denote these as $E \sqcup E^*$, where $(v_2, v_1)^* = (v_1, v_2)$. Fix an arbitrary vertex $v_0 \in V$ and consider the set of cycles in $\mathcal{G}$ starting at $v_0$:
      \begin{equation}
        \mathcal{C}\left(v_0\right) \, := \, \left\{ \, (v_0, v_p) \dots (v_2, v_1 ) (v_1, v_0) \;\;\; \text{such that} \;\; (v_{i + 1}, v_i), (v_0, v_p) \in E \sqcup E^* \,\right\}. \nonumber
      \end{equation}
      Consider the inverse semigroup $S$ formally generated by $V \sqcup E \sqcup E^*\sqcup \{0\}$ and with relations given by:
      \begin{itemize}
        \item $v = v^2 = v^* = (v, v)$ for all $v \in V$.
        \item $v_2 (v_2, v_1) = (v_2, v_1) = (v_2, v_1) v_1$ for all edges $(v_2, v_1) \in E$.
        \item $v_3 (v_2, v_1) = 0$ if $v_2 \neq v_3$.
        \item $(v_2, v_1) v_3 = 0$ if $v_1 \neq v_3$.
        \item $\omega = v_0^*v_0$ for all $\omega \in \mathcal{C}(v_0)$.
      \end{itemize}
      Then, taking $K := V \sqcup E  \sqcup E^*\sqcup \{0\}$, the left Sch\"{u}tzenberger graph of the $\mathcal{L}$-class of $v_0$ can naturally be seen as oriented paths in $\mathcal{G}$ starting at $v_0$. Indeed, note that non-zero elements in $S$ are formal expressions $\mathfrak{p} = (v_p, v_{p-1}) \dots (v_2, v_1)$, where $(v_i, v_{i-1})$ are edges in $\mathcal{G}$, and $\mathfrak{p} \mathcal{L} v_0$ if and only if $\mathfrak{p}$ starts at $v_0$. Moreover, if $\mathfrak{p}, \mathfrak{q}$ are two paths in $\mathcal{G}$ with the same initial vertex $v$ and final vertex $w$, it follows that $\mathfrak{q}^*\mathfrak{p} = v$ and $\mathfrak{p}\mathfrak{q}^* = w$. Therefore
      \begin{equation}
        \mathfrak{p} = \mathfrak{p} v = \mathfrak{p} \mathfrak{q}^* \mathfrak{p} \;\; \text{and} \;\; \mathfrak{q}^* = \mathfrak{q}^* w = \mathfrak{q}^* \mathfrak{p} \mathfrak{q}^*, \nonumber
      \end{equation}
      which implies that $\mathfrak{p} = \mathfrak{q}$ whenever they share the initial and final vertices. Hence the map $\mathfrak{p} \mapsto r(\mathfrak{p})$, sending each path $\mathfrak{p}$ to its final vertex, is a natural bijection between the $\mathcal{L}$-class of $v_0$ in $S$ and the graph $\mathcal{G}$. Observe, as well, that two elements $\mathfrak{p}, \mathfrak{q} \in S$ are joined by an edge in $\Lambda_S$ if one is a prefix of the other, and therefore the map above is a graph isomorphism.
    \end{proof}
    \begin{remark}
      The construction of $S$ in the proof of the preceding proposition can be also done in the more general case where the graph $\mathcal{G} = (V,E)$ is $K$-labeled with $K$  a set, as long as the labeling of the graph is \textit{deterministic} (see~\cite{MS15}). Moreover, in that case the graph isomorphism respects the $K$-labeling. Also, observe that the construction above is not the so-called \textit{path inverse semigroup} (see~\cite{AH75}).

      Note, as well, that we require the vertices of $\mathcal{G}$ to be decorated with a loop. This condition is irrelevant from a large-scale geometry point of view and, therefore, any simple connected graph can be quasi-isometrically realized as a Sch\"utzenberger graph of an inverse semigroup.
    \end{remark}

    In the final part of the section we will state some results concerning the graph $\Lambda_S$ when seen as a metric space with the path length metric. The next useful lemma relates different distances one may consider in inverse semigroups. In particular, if $S = \langle K \rangle$ we denote by $\ell(\cdot)$ the minimal length of a word in the alphabet $K$, by $d_S$ the path distance in $\Lambda(S, K)$ and by $d_{G(S)}$ the path distance in the left Cayley graph of $G(S)$ with respect to $\sigma(K)$.
    \begin{lemma}\label{lemma_dist_lclasses}
      Let $S = \langle K \rangle$ be an inverse semigroup, and let $\sigma \colon S \rightarrow G(S)$ be the canonical projection onto the maximal homomorphic image.
      \begin{enumerate}
        \item \label{dist_lclasses_1} For any $s, x \in S$ such that $x \in D_{s^*s}$ we have
          \begin{equation}
            d_{G\left(S\right)}\left(\sigma\left(x\right), \sigma\left(sx\right)\right) \leq d_{S}\left(x, sx\right) \leq d_S \left(s^*s, s\right) \leq \ell(s). \nonumber
          \end{equation}
        \item \label{dist_lclasses_2} For any $s, x \in S$ such that $xx^* = s^*s$ (hence, in particular, $x \in D_{s^*s}$) we have
          \begin{equation}
            d_S\left(x, sx\right) = d_S\left(s^*s, s\right). \nonumber
          \end{equation}
      \end{enumerate}
    \end{lemma}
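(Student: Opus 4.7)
For part~(\ref{dist_lclasses_1}) I would establish the three inequalities independently. The leftmost inequality $d_{G(S)}(\sigma(x), \sigma(sx)) \leq d_S(x, sx)$ is immediate, since $\sigma \colon S \to G(S)$ is a semigroup homomorphism sending the generating set $K$ to $\sigma(K)$: any $K$-labeled edge-path from $x$ to $sx$ in $\Lambda_S$ projects under $\sigma$ to a $\sigma(K)$-labeled walk of at most the same length from $\sigma(x)$ to $\sigma(sx)$ in the Cayley graph of $G(S)$.

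For the middle inequality, I would fix a geodesic $s^*s = y_0, y_1, \ldots, y_n = s$ in $\Lambda_{L_{s^*s}}$, so $y_i = k_i y_{i-1}$ and $y_i^* y_i = s^*s$ for every $i$. Using $s^*s x = x$ (which holds since $x \in D_{s^*s}$), I would transport this geodesic to the sequence $z_i := k_i \cdots k_1 x$; associativity gives $z_n = (k_n \cdots k_1 s^*s) x = s x$, and the identity $k_i^* k_i y_{i-1} = y_{i-1}$ (which follows from $y_{i-1} \mathcal{L} y_i$ via Lemma~\ref{lemma_action_lclasses}) propagates to $k_i^* k_i z_{i-1} = z_{i-1}$. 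Thus each step $z_{i-1} \to z_i$ is a valid edge in $\Lambda_{L_{x^*x}}$, producing a path from $x$ to $sx$ of length $n$.

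For the rightmost inequality, I would write $s = k_n \cdots k_1$ as a word of minimal length $\ell(s)$ and set $y_i := k_i \cdots k_1 s^*s$, so that $y_0 = s^*s$ and $y_n = s s^*s = s$. The key computation is $y_i^* y_i = s^*s \, \pi_i \, s^*s$, where $\pi_i := k_1^* \cdots k_i^* k_i \cdots k_1$. I would then prove by induction that the projections satisfy $\pi_{i+1} \leq \pi_i$: writing $w_i := k_i \cdots k_1$ and using $w_i w_i^* w_i = w_i$, one computes $\pi_{i+1} \pi_i = w_i^* k_{i+1}^* k_{i+1} w_i w_i^* w_i = \pi_{i+1}$. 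Since $\pi_n = s^*s$ then sits below every $\pi_i$, absorption yields $s^*s \pi_i = s^*s$ and hence $y_i^* y_i = s^*s$, so the $y_i$ trace out a path inside $\Lambda_{L_{s^*s}}$. This is the technical heart of the lemma and the step I expect to be the main obstacle, as it requires carefully unpacking the order structure on projections and the locally-groupoid-like cancellations $w w^* w = w$.

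For part~(\ref{dist_lclasses_2}), the inequality $d_S(x, sx) \leq d_S(s^*s, s)$ follows from part~(\ref{dist_lclasses_1}), since $xx^* = s^*s$ forces $s^*s x = xx^* x = x$, hence $x \in D_{s^*s}$. For the reverse inequality the plan is to exhibit right multiplication by $x^*$ and by $x$ as mutually inverse bijections between $L_{x^*x}$ and $L_{xx^*} = L_{s^*s}$: a routine check gives $(yx^*)^*(yx^*) = xx^* = s^*s$ for $y \in L_{x^*x}$ and $(zx)^*(zx) = x^*x$ for $z \in L_{s^*s}$, with $(yx^*)x = y$ and $(zx)x^* = z$. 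Since these maps intertwine left multiplication by any $k \in K$, they implement a graph isomorphism $\Lambda_{L_{x^*x}} \cong \Lambda_{L_{s^*s}}$ sending $x \mapsto xx^* = s^*s$ and $sx \mapsto sxx^* = s$, thereby equating the two distances.
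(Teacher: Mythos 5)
Your proof is correct and follows essentially the same route as the paper: all three inequalities in part~(\ref{dist_lclasses_1}) are obtained by transporting labeled paths (under right multiplication by $x$ and under $\sigma$), with your projection-ordering argument merely supplying the detail, left implicit in the paper, that the vertices $k_i\cdots k_1\, s^*s$ remain in the $\mathcal{L}$-class $L_{s^*s}$. For part~(\ref{dist_lclasses_2}) the paper uses the same right-multiplication-by-$x^*$ mechanism in a two-line sandwich $d_S(x,sx)\leq d_S(s^*s,s)=d_S(xx^*,sxx^*)\leq d_S(x,sx)$, whereas you package it as the graph isomorphism of Proposition~\ref{prop_dclasses}; the difference is only cosmetic.
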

    \begin{proof}
      For (\ref{dist_lclasses_1}), observe that if $\ell(s) = d$ and $s = k_d \dots k_1 \in K^d$, then $s^*s$ and $s$ are joined by a path in the $\mathcal{L}$-class $L_{s^*s}$ labeled by $k_d, \dots, k_1$, and thus $\ell(s) \geq d_S(s^*s, s)$. Moreover, for $x\in D_{s^*s}$ a geodesic path joining $s^*s$ with $s$ will define by multiplication from the right with $x$ a path of the same length joining $x$ with $sx$ on $L_{x^*x}$ and, therefore, $d_{S}(x, sx) \leq d_S(s^*s, s)$. Similarly, any geodesic path joining $x$ with $sx$ defines via the quotient map $\sigma$ a path joining $\sigma(x)$ with $\sigma(sx)$ in the Cayley graph of $G(S)$, proving the last inequality.

      Part (\ref{dist_lclasses_2}) follows from (\ref{dist_lclasses_1}) since
      \begin{equation}
        d_S\left(x, sx\right) \leq d_S\left(s^*s, s\right) = d_S\left(xx^*, sxx^*\right) \leq d_S\left(x, sx\right), \nonumber
      \end{equation}
      where the last inequality is, again, due to the fact that a geodesic connecting $x$ and $sx$ in $\Lambda_S$ defines a path between $xx^*$ and $sxx^*$ when multiplied on the right by $x^*$.
    \end{proof}

    Note that Lemma~\ref{lemma_dist_lclasses}~(\ref{dist_lclasses_2}) relates distances between pairs of points in different $\mathcal{L}$-classes, since $x$ and $xx^*$ need not be $\mathcal{L}$-related in general. Moreover, Lemma~\ref{lemma_dist_lclasses} precisely defines the notion of \textit{right-invariance} of a metric in the context of inverse semigroups.

    The following proposition, which might be known to experts, proves that some of the Sch\"{u}tzenberger graphs $\Lambda_L$ are isomorphic to each other.
    \begin{proposition}\label{prop_dclasses}
      Let $S$ be a countable inverse semigroup. Given $s \in S$, let $\Lambda_{s^*s}$ and $\Lambda_{ss^*}$ be the Sch\"{u}tzenberger graphs of the $\mathcal{L}$-classes of $s^*s$ and $ss^*$, respectively. Then
      \begin{equation}
        \rho \colon \Lambda_{s^*s} \rightarrow \Lambda_{ss^*}, \;\;\; \text{where} \;\; \rho\left(x\right) := xs^* \nonumber
      \end{equation}
      defines a graph isomorphism between $\Lambda_{s^*s}$ and $\Lambda_{ss^*}$.
    \end{proposition}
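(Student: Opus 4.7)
The plan is to verify directly that the map $\rho(x) = xs^*$ is (a) well-defined as a map $L_{s^*s} \to L_{ss^*}$, (b) a bijection, and (c) compatible with the $K$-labeled edge structure in both directions. All three points reduce to short algebraic manipulations in the inverse semigroup using only the axioms $ss^*s=s$, $s^*ss^*=s^*$, together with Lemma~\ref{lemma_action_lclasses} to identify $\mathcal{L}$-classes.

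First I would check that $\rho$ lands in $L_{ss^*}$: for $x \in L_{s^*s}$ we have $x^*x = s^*s$, hence
\[
(xs^*)^*(xs^*) = s x^* x s^* = s(s^*s)s^* = ss^*,
\]
so $\rho(x) \in L_{ss^*}$. Next, I would propose the candidate inverse $\rho^{-1}(y) := ys$ for $y \in L_{ss^*}$. The key identity is that $x^*x = s^*s$ forces $xs^*s = x$, since $xs^*s = xx^*x = x$; symmetrically $yss^* = y$ when $y^*y = ss^*$. These identities immediately give $\rho^{-1}\rho(x) = xs^*s = x$ and $\rho \rho^{-1}(y) = yss^* = y$, and the same argument as in the first step shows $\rho^{-1}$ maps $L_{ss^*}$ to $L_{s^*s}$. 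Hence $\rho$ is a bijection of vertex sets.

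For the edge structure, suppose $x,y \in L_{s^*s}$ are joined by a $k$-labeled edge, i.e.\ $kx = y$. Multiplying on the right by $s^*$ yields $k\rho(x) = kxs^* = ys^* = \rho(y)$, so $\rho(x),\rho(y)$ are joined by a $k$-labeled edge in $\Lambda_{ss^*}$. Conversely, if $\rho(x),\rho(y) \in L_{ss^*}$ are joined by $k$, i.e.\ $k\rho(x) = \rho(y)$, then multiplying on the right by $s$ gives $kxs^*s = ys^*s$, which by the identity above collapses to $kx = y$. Thus $\rho$ preserves and reflects adjacencies together with their $K$-labels.

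There is essentially no obstacle beyond bookkeeping: the only non-trivial fact used is the identity $xs^*s = x$ for $x \in L_{s^*s}$, which is a direct consequence of $xx^*x = x$. Once this is isolated, the well-definedness, bijectivity, and edge-compatibility all follow from one-line computations, giving the desired graph isomorphism $\Lambda_{s^*s} \cong \Lambda_{ss^*}$.
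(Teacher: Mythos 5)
Your proof is correct and follows essentially the same route as the paper's: a direct verification that $\rho(x)=xs^*$ lands in the right $\mathcal{L}$-class, is bijective (the paper establishes surjectivity via $t=\rho(ts)$, which is exactly your inverse $y\mapsto ys$), and preserves and reflects $K$-labeled edges via right multiplication, all resting on the identity $xs^*s=x$ for $x\in L_{s^*s}$.
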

    \begin{proof}
      First note that $(xs^*)^* xs^* = sx^*xs^* = ss^*ss^* = ss^*$ and thus $xs^* \mathcal{L} ss^*$. If $xs^* = ys^*$ then $x = xx^*x = xs^*s = ys^*s = yy^*y = y$ for any $x, y \in \Lambda_{s^*s}$, which proves that $\rho$ is injective. Given an arbitrary $t \in \Lambda_{ss^*}$, observe that $t^*t = ss^*$ and, therefore, $t = tss^* = \rho(ts)$ since $ts \mathcal{L} s^*s$, proving that $\rho$ is a bijection. Finally, since $\rho$ is defined by right multiplication it preserves adjacency. Indeed, the points $x, kx \in \Lambda_{s^*s}$ are joined by an edge labeled by $k$ if and only if the points $xs^*, kxs^* \in \Lambda_{ss^*}$ are joined by an edge labeled by $k$.
    \end{proof}

    We conclude this section introducing quasi-isometries, which are an important special case of coarse equivalences between metric spaces. This concept is central when viewing an infinite discrete group as a (coarse) geometric object. Given two extended metric spaces $(X, d_X)$ and $(Y, d_Y)$, we say that they are \textit{quasi-isometric} (see~\cite{R03,NY12,GK13}) if there is a map $\phi \colon X \rightarrow Y$ such that
    \begin{enumerate}
       \item \label{def_qi_emb} There are some constants $M>0$, $C \geq 0$ such that, for any $x, x' \in X$
        \begin{equation}
          \frac{1}{M} d_X \left(x, x'\right) - C \leq d_Y\left(\phi\left(x\right), \phi\left(x'\right)\right) \leq M d_X\left(x, x'\right) + C. \nonumber
        \end{equation}
      \item \label{def_qi_surj} There exists $R > 0$ such that for any $y \in Y$ there is $x \in X$ with $d_Y\left(y, \phi\left(x\right)\right) \leq R$.
    \end{enumerate}
    A function $\phi$ satisfying both conditions above is called a \textit{quasi-isometry}, while an injective map $\phi$ satisfying only (\ref{def_qi_emb}) is called a \textit{quasi-isometric embedding}.
     
    In the following proposition we generalize a well known result in the case of groups (see~\cite[Theorem~1.3.12]{NY12}). See also \cite[Proposition~4]{GK13} for a 
    similar statement for semigroups considered as semi-metric spaces.
    \begin{proposition}\label{prop_uniq}
       Let $S$ be an inverse semigroup and let $K_1, K_2 \subset S$ be two finite and symmetric generating sets. Then the graphs $\Lambda(S, K_1)$ and $\Lambda(S, K_2)$ are quasi-isometric.
    \end{proposition}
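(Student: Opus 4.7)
The plan is to show that the identity map $\mathrm{id}\colon S \to S$ is itself a quasi-isometry between $\Lambda(S,K_1)$ and $\Lambda(S,K_2)$, exactly as in the classical group case. Condition~(\ref{def_qi_surj}) in the definition of a quasi-isometry is then trivially satisfied with $R=0$, so the task reduces to establishing the bi-Lipschitz estimate in~(\ref{def_qi_emb}). First I would observe that the connected components of both graphs are the $\mathcal{L}$-classes of $S$, which are defined purely in terms of the semigroup structure ($x \mathcal{L} y \Leftrightarrow x^*x = y^*y$) and so do not depend on the choice of generating set. Consequently, a pair $x,y\in S$ is at infinite distance in $\Lambda(S,K_1)$ if and only if it is at infinite distance in $\Lambda(S,K_2)$, and the estimate needs only be verified within a single $\mathcal{L}$-class.

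Next, since $K_1,K_2$ are finite, I can define the finite constants
\begin{equation}
M_1 := \max_{k\in K_1} \ell_{K_2}(k) \quadtext{and} M_2 := \max_{k\in K_2} \ell_{K_1}(k), \nonumber
\end{equation}
where $\ell_{K_i}$ denotes the minimal word-length in the alphabet $K_i$. I would then apply Lemma~\ref{lemma_dist_lclasses}(\ref{dist_lclasses_1}): for each $k\in K_1$ and each $x \in D_{k^*k}$ one has
\begin{equation}
d_{K_2}(x,kx) \;\leq\; d_{K_2}(k^*k,k) \;\leq\; \ell_{K_2}(k) \;\leq\; M_1, \nonumber
\end{equation}
and analogously for $K_2$-edges with bound $M_2$. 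Given $x,y$ in a common $\mathcal{L}$-class with $d_{K_1}(x,y)=d$, a geodesic $K_1$-path $x=x_0,x_1,\dots,x_d=y$ satisfies $x_i=k^{(i)}x_{i-1}$ for some $k^{(i)}\in K_1$ with $x_{i-1}\in D_{(k^{(i)})^*k^{(i)}}$; concatenating the resulting $K_2$-paths yields $d_{K_2}(x,y)\leq M_1\, d_{K_1}(x,y)$, and symmetrically $d_{K_1}(x,y)\leq M_2\, d_{K_2}(x,y)$. Setting $M:=\max(M_1,M_2)$ and $C:=0$ then delivers condition~(\ref{def_qi_emb}).

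The only non-routine point is subtler than in the group case: writing $k=k_n\cdots k_1\in K_2^n$, one must ensure that the intermediate products $k_i\cdots k_1 x$ actually define vertices that remain in the same $\mathcal{L}$-class as $x$, since otherwise the purported $K_2$-path would not exist in $\Lambda(S,K_2)$. This, however, is precisely what Lemma~\ref{lemma_dist_lclasses}(\ref{dist_lclasses_1}) (together with its proof, which is built on Lemma~\ref{lemma_action_lclasses} and the observation that the idempotents $k_1^*\cdots k_i^*k_i\cdots k_1$ decrease to $k^*k$ in the natural order on $E(S)$) already handles. Thus the one inverse-semigroup-specific obstruction has been absorbed into the preceding lemma, and the proposition follows by assembling the pieces above.
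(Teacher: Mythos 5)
Your proof is correct and follows essentially the same route as the paper: take the identity map, reduce to points within a common $\mathcal{L}$-class (infinite distances agreeing automatically), and obtain the bi-Lipschitz bound by transporting, via Lemma~\ref{lemma_dist_lclasses}, a uniform bound on the cross-metric length of each generator (the paper's constant $\max\{d_{K_1}(k^*k,k),\, d_{K_2}(k^*k,k) \mid k\in K_1\cup K_2\}$ plays the role of your $\max(M_1,M_2)$, the only difference being that you bound distances by word lengths). Your extra care about the intermediate vertices staying in the same $\mathcal{L}$-class is exactly the point the paper delegates to Lemma~\ref{lemma_dist_lclasses} as well, so there is no substantive difference.
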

    \begin{proof}
      The identity function $id \colon \Lambda(S, K_1) \rightarrow \Lambda(S, K_2)$ is a quasi-isometry. Indeed, since it is surjective it is enough to prove there is some constant $M > 0$ such that
      \begin{equation}
        \frac{1}{M} d_{K_1}\left(x, y\right) \leq d_{K_2}\left(x, y\right) \leq M d_{K_1}\left(x, y\right)\;\; \text{for any} \;\, x,y\in S\;, \nonumber
      \end{equation}
      where $d_{K_i}$ denotes the path distance in the graph $\Lambda(S, K_i)$. Note that if $x$ and $y$ are not $\mathcal{L}$-related then $d_{K_1}(x, y) = d_{K_2}(x, y) = \infty$, so we may suppose that $x, y$ belong to the same $\mathcal{L}$-class. In this case the inequalities follow by choosing
      \begin{equation}
        M := \max \left\{ d_{K_1}\left(k^*k, k\right)\,,\, d_{K_2}\left(k^*k, k\right)\; \mid k \in K_1 \cup K_2 \right\}\,. \nonumber
      \end{equation}
    \end{proof}

    \subsection{The uniform Roe-algebra}\label{sec_bnbprop_thm}
    In this section we will show that the graph $\Lambda_S:=\sqcup_e L_e$ (seen as a metric space with the path length metric) allows to witness the algebra $\mathcal{R}_S$ as the uniform Roe algebra $C_u^*(\Lambda_S)$. We begin introducing an important notion for the semigroup $S$ (see in relation with Theorem~\ref{thm_mtrspc}).
    \begin{definition}\label{def_fl}
      Let $S = \langle K \rangle$ be an inverse semigroup with countable and symmetric generating set $K$ and let $L \subset S$ be an $\mathcal{L}$-class.
      \begin{enumerate}
        \item We say the pair $(L, K)$ admits a \textit{finite labeling}, or is \textit{FL} for short, if there are $C \geq 1$ and a finite $K_1 \subset K$ such that for any $x, y \in L$ with $y \in K x$, we have that $y \in K_1^px$ for some $p \in \{1, \dots, C\}$, where $K_1^p$ denotes the words of length $p$ in the alphabet $K_1$.
        \item We say the pair $(S, K)$ admits a \textit{finite labeling} if every $\mathcal{L}$-class of $S$ is FL uniformly over the classes, that is, if there are $C \geq 1$ and a finite $K_1 \subset K$ such that, for any $x, y \in S$ with $x \mathcal{L} y$ and $y \in K x$, we have that $y \in K_1^p x$ for some $p \in\{ 1, \dots, C\}$.
      \end{enumerate}
    \end{definition}
    \begin{remark}
      We say \textit{the pair $(S, K)$ has condition FL} instead of \textit{$S$ has condition FL} since admitting a finite labeling may depend on the generating set $K$. However, as most times $K$ is fixed, we may say \textit{$S$ admits a finite labeling}, meaning the pair $(S, K)$ admits a finite labeling.
    \end{remark}
    The preceding definition, although technical, is essential to various arguments in the article. It is, in particular, an algebraic characterization of the equality of C*-algebras $\mathcal{R}_S = C_u^*(\Lambda_S)$ (see Theorem~\ref{thm_mtrspc}).

    The following proposition gives sufficient conditions for an inverse semigroup to be FL (see Proposition~\ref{prop_ex_fl}), showing there are important classes of examples admitting finite labelings. Recall that an inverse semigroup is called \textit{F-inverse} if each $\sigma$-class has exactly one greatest element. All free inverse semigroups are F-inverse and, moreover, every inverse semigroup has an F-inverse cover (see~\cite[p.~230]{L98}).
    \begin{proposition}\label{prop_ex_fl}
      The following classes of inverse semigroups admit finite labelings:
      \begin{enumerate}
        \item \label{class_fl_fg} The class of finitely generated inverse semigroups.
        \item \label{class_fl_finv} The class of F-inverse semigroups such that $\Lambda_S$ is of bounded geometry.
      \end{enumerate}
    \end{proposition}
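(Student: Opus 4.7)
I treat the two cases separately.

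For case~(\ref{class_fl_fg}), the statement is essentially tautological. If $K$ is already finite and symmetric, then setting $K_1 := K$ and $C := 1$ makes the FL condition trivial, since any $y \in Kx$ lies in $K_1^1 x$ by definition.

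For case~(\ref{class_fl_finv}), the plan is to combine the F-inverse structure with the bounded-geometry hypothesis. I first invoke Proposition~\ref{prop_bddgeom} to conclude that $G(S)$ is finitely generated. Since $\sigma(K)$ generates $G(S)$, I can pick a finite (symmetric) subset $K_1 \subset K$ so that $K_G := \sigma(K_1)$ is a finite symmetric generating set of $G(S)$; for each $g \in K_G$ I single out a chosen lift $k_g \in K_1$.

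The heart of the argument exploits F-inverse as follows. If $y = kx$ with $y \mathcal{L} x$, then $k \leq m_{\sigma(k)}$ and the idempotent correction $m_{\sigma(k)} k^*$ acts as the identity on $x$, whence $y = m_{\sigma(k)} x$. Writing $\sigma(k) = g_p \cdots g_1$ as a word in $K_G$ and forming the lifted product $w := k_{g_p} \cdots k_{g_1} \in K_1^p$, one has $\sigma(w) = \sigma(k)$, so $w \leq m_{\sigma(k)}$. Provided $wx$ is defined and $\mathcal{L}$-related to $x$, the uniqueness of the maximum in the $\sigma$-class then forces $wx = m_{\sigma(k)} x = y$, yielding the required $y \in K_1^p x$.

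Two points remain: (a) a uniform bound $C$ on $p$ over all edges of $\Lambda_S$, and (b) the domain compatibility $x \in D_{w^* w}$. For (a), I would use that F-inverse implies E-unitary, so $\sigma|_L$ is injective on each $\mathcal{L}$-class; combined with the bounded-geometry hypothesis and Lemma~\ref{lemma_dist_lclasses}, the $K_G$-word length of the labels $\sigma(k)$ of edges of $\Lambda_S$ can be bounded uniformly. For (b), the F-inverse factorization $s = m_{\sigma(s)} \cdot s^*s$ lets one rectify the domain either by choosing the decomposition $\sigma(k) = g_p \cdots g_1$ judiciously, or by enlarging $K_1$ with a finite family of idempotents of $K$ lying above the relevant range projections $m_g m_g^*$. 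I expect step~(b) to be the most technically delicate point, since the product $w$ has a strictly smaller domain than $m_{\sigma(k)}$ in general, and this is exactly where the F-inverse hypothesis (every element being a maximal element times a single idempotent) is essential.
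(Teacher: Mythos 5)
Your case~(\ref{class_fl_fg}) coincides with the paper's (take $K_1=K$, $C=1$). For case~(\ref{class_fl_finv}) your overall strategy --- dominate each edge label $k$ by the maximum $m_{\sigma(k)}$ of its $\sigma$-class and use bounded geometry of $G(S)$ (Proposition~\ref{prop_bddgeom}) to reduce to finitely many group elements --- is close in spirit, but there is a genuine gap exactly at your step~(b), and the remedies you sketch do not close it. Once you replace $m_{\sigma(k)}$ by the lifted word $w=k_{g_p}\cdots k_{g_1}\in K_1^p$ with $\sigma(w)=\sigma(k)$, you only know $w\le m_{\sigma(k)}$, so the conclusion $wx=m_{\sigma(k)}x=y$ needs $x\in D_{w^*w}$; this can fail, since there may be infinitely many $k\in K$ with the same $\sigma$-image but incomparable (arbitrarily small) domains, and $x$ is only known to lie in $D_{k^*k}$. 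Neither proposed fix works: multiplying $w$ by idempotents can only shrink its domain, never enlarge it, and no ``judicious'' choice of the decomposition $\sigma(k)=g_p\cdots g_1$ changes the fact that its lift evaluates, in $S$, to an element that is in general strictly below $m_{\sigma(k)}$. (Your step~(a) via E-unitarity is also beside the point: every relevant $\sigma(k)$ already lies in the $1$-ball of $G(S)$, so the length bound is not where the difficulty sits.)

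The paper's proof avoids the lifting altogether, and this is the idea your argument is missing. It considers the set $A_{\mathrm{great}}$ of elements $s$ that are greatest in their $\sigma$-class and admit some $x\in D_{s^*s}$ with $d_S(x,sx)\le 1$. By uniqueness of the maximum, $\sigma$ is injective on $A_{\mathrm{great}}$ and maps it into the $1$-ball of $G(S)$, which is finite by Proposition~\ref{prop_bddgeom}; hence $A_{\mathrm{great}}$ is finite. One then fixes, for each $s_0\in A_{\mathrm{great}}$, an expression of $s_0$ as a word in $K$ (possible because $S=\langle K\rangle$), takes $K_1$ to be the finitely many letters so used and $C$ the maximal length. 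Given an edge $y=kx$ with $x\,\mathcal{L}\,y$, the maximum $s_0$ of $\sigma(k)$ belongs to $A_{\mathrm{great}}$, and since $D_{k^*k}\subset D_{s_0^*s_0}$ one gets $y=kx=s_0x$; here the chosen word \emph{equals} $s_0$ as an element of $S$, so there is no domain compatibility left to verify and $y\in K_1^p x$ with $p\le C$. To repair your proof, replace ``lift a $K_G$-word for $\sigma(k)$ letter by letter'' by ``write $m_{\sigma(k)}$ itself as a $K$-word''; with that substitution your argument becomes the paper's.
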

    \begin{proof}
      First we show that every finitely generated inverse semigroup belongs to the FL class. In fact, take $C := 1$ and $K_1 := K$, which is finite by assumption. For the second statement let $S = \langle K \rangle$ be an F-inverse semigroup such that $\Lambda_S$ is of bounded geometry. Denote the maximal group homomorphic image $G(S)$ simply by $G$. Consider then the projection $\sigma$ restricted to the following set of greatest elements:
      \[
        A_{\mathrm{great}}:= \left\{s \in S \mid s \; \text{is greatest in}~\sigma(s) \;\text{and}\; \exists x \in D_{s^*s} \, \text{with} \; d_S\left(x, sx\right) \leq 1 \, \right\}\;,
      \]
      i.e., consider
      \begin{equation}
        \sigma \colon\,A_{\mathrm{great}} \rightarrow \, \left\{\, g \in G \; \mid \; d_G\left(1_{G}, g\right) \leq 1 \, \right\}. \nonumber
      \end{equation}
      Note that $A_{\mathrm{great}}$ is not empty because $1 \in A_{\mathrm{great}}$ and since the greatest element in each class is unique the preceding map is injective. Moreover, the right hand side is a finite set since $\Lambda_S$ is of bounded geometry (see Proposition~\ref{prop_bddgeom}) and, hence, $A_{\mathrm{great}}$ is finite too. Therefore there is a finite $K_1 \subset K$ such that every element in $A_{\mathrm{great}}$ is a word in $K_1$ of length less than $C:=\max\{\ell(s)\mid s\in A_{\mathrm{great}}\}$. It then follows that $S$ is FL for those $K_1$ and $C$. In fact, let $x\mathcal{L}y$ with $y=kx$ for some $k\in K$ (i.e., $d_S(x,y)\leq 1$). Denote by $s_0$ the greatest element in $\sigma(k)$. Since $D_{k^*k}\subset D_{s_0^*s_0}$ we have $sx=kx=y$ and, therefore, $y\in K_1^px$ for some $p\in \{1,\dots, C\}$, proving that $S$ is FL.
    \end{proof}

    The following proposition gives necessary conditions for an inverse semigroup to admit a finite labeling. Even though its proof is straightforward it highlights some key ideas behind the definition. Note that the construction of $\Lambda(S,K)$, carried out in Subsection~\ref{sec_dist} for a generating set $K$, can be done similarly for any subset of labels $K_1 \subset K$, and the corresponding graph will be denoted by $\Lambda(S,K_1)$, which is then a subgraph of $\Lambda(S,K)$.
    \begin{proposition}\label{prop_fl_vs_qi}
      Let $S = \langle K \rangle$ be an FL inverse semigroup, and let $K_1 \subset K$ be as in Definition~\ref{def_fl}. Then the following hold:
      \begin{enumerate}
        \item \label{fl_implies_qi} The identity map on vertices $\Lambda(S, K) \rightarrow \Lambda(S, K_1)$ is a quasi-isometry.
        \item \label{fl_implies_fg_mod_e} Every $s \in S$ can be written as a word $s = k_d \dots k_1 e$, where $e \in E(S)$ and $k_i \in K_1$, that is, $S$ is \textit{finitely generated modulo the idempotents}.
      \end{enumerate}
    \end{proposition}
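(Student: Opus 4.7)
For part~(\ref{fl_implies_qi}), I would exploit the inclusion $K_1 \subset K$ to get half of the quasi-isometry estimate for free: every edge of $\Lambda(S, K_1)$ is an edge of $\Lambda(S, K)$, so $d_K(x, y) \leq d_{K_1}(x, y)$ pointwise (both equal $\infty$ outside a common $\mathcal{L}$-class). For the reverse estimate, note that whenever $y = kx$ with $k \in K$ and $x \mathcal{L} y$, the FL hypothesis produces some $p \leq C$ and $k'_1, \dots, k'_p \in K_1$ with $y = k'_p \cdots k'_1 x$, hence $d_{K_1}(x, y) \leq C$. Concatenating this bound along a geodesic of $\Lambda(S, K)$ gives $d_{K_1}(x, y) \leq C \cdot d_K(x, y)$. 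Since the identity is surjective on vertices, this yields a quasi-isometry with multiplicative constant $C$ and additive constant $0$.

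For part~(\ref{fl_implies_fg_mod_e}), the plan is to build, for each $s \in S$, an explicit $K$-labeled path inside the $\mathcal{L}$-class of $e := s^*s$ from $e$ to $s$, and then apply FL edge-by-edge. Write $s = k_n \cdots k_1$ with $k_i \in K$ (possible because $S = \langle K \rangle$) and define
\begin{equation*}
 z_i := k_i \cdots k_1 \cdot e, \qquad i = 0, 1, \dots, n,
\end{equation*}
so that $z_0 = e$ and $z_n = s \cdot s^*s = s$. I would prove by induction that $z_i \in L_e$: by Lemma~\ref{lemma_action_lclasses} it suffices to show $z_{i-1} \in D_{k_i^* k_i}$ at each step, which then gives a $k_i$-edge between $z_{i-1}$ and $z_i$ inside $\Lambda_{L_e}$. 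Once the path is in place, applying FL to each consecutive pair $(z_{i-1}, z_i)$ replaces its $k_i$-edge by a $K_1$-labeled path of length at most $C$; concatenating these produces a word $k'_d \dots k'_1$ in $K_1$ with $s = k'_d \cdots k'_1 \cdot e$, as required.

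The technical core is the inductive step $z_{i-1} \in D_{k_i^* k_i}$, which I expect to reduce to the projection inequality $e \leq p_i$ in $E(S)$, where $p_i := (k_i \cdots k_1)^*(k_i \cdots k_1)$. I plan to verify this by the direct calculation
\begin{equation*}
 p_n \cdot p_i \;=\; k_1^* \cdots k_n^* \cdot k_n \cdots k_{i+1} \cdot u u^* \cdot k_i \cdots k_1 \;=\; k_1^* \cdots k_n^* \cdot k_n \cdots k_1 \;=\; p_n,
\end{equation*}
with $u := k_i \cdots k_1$, using the identity $u u^* u = u$ of inverse semigroups; since $p_n = e$, commutativity of $E(S)$ then gives $p_i \cdot e = e$ and hence $k_i^* k_i \cdot z_{i-1} = z_{i-1}$. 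I anticipate that this idempotent manipulation is the main obstacle, with everything else reducing to bookkeeping along the path $z_0, z_1, \dots, z_n$.
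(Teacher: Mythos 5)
Your proposal is correct and takes essentially the same route as the paper: part~(\ref{fl_implies_qi}) is exactly the paper's two-sided comparison $d_K(x,y) \leq d_{K_1}(x,y) \leq C\, d_K(x,y)$, and part~(\ref{fl_implies_fg_mod_e}) fleshes out the paper's one-line argument $s = ss^*s = k_d \cdots k_1 s^*s$ by explicitly building the $K$-labeled path from $s^*s$ to $s$ inside $L_{s^*s}$ (the same path underlying Lemma~\ref{lemma_dist_lclasses}) and applying FL edge by edge. Your idempotent computation $p_n p_i = p_n$ is precisely the verification, left implicit in the paper, that this path stays within the $\mathcal{L}$-class; the same prefix argument also covers the small point you gloss over in part~(\ref{fl_implies_qi}), namely that the $K_1$-word supplied by FL yields a genuine path in $\Lambda(S,K_1)$.
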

    \begin{proof}
      (\ref{fl_implies_qi}) follows from the fact that for every $x, y \in S$
      \begin{equation}
        d_K\left(x, y\right) \leq d_{K_1}\left(x, y\right) \leq C d_K \left(x, y\right) \nonumber
      \end{equation}
      where $d_K$ and $d_{K_1}$ denote the path distances in $\Lambda(S, K)$ and $\Lambda(S, K_1)$, respectively, and $C \geq 1$ is as in Definition~\ref{def_fl}.

      (\ref{fl_implies_fg_mod_e}) holds since, by the hypothesis, any $s \in S$ can be written as $s = k_d \dots k_1$ where $k_i \in K_1$ that is, $s = ss^*s = k_d \dots k_1 s^*s$.
    \end{proof}

    Lastly, the following gives an alternative characterization of admitting a finite labeling.
    \begin{proposition} \label{prop:fl:newchr}
      Let $S = \langle K \rangle$ be an inverse semigroup. The pair $(S, K)$ admits a finite labeling if, and only if, for every $R \geq 0$ there is a finite $F \subset S$ such that if $d(s^*s, s) \leq R$ then there is some $m \in F$ such that $m s^*s = s$ (or, equivalently, $m \geq s$).
    \end{proposition}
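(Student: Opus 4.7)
My plan is to prove the two implications separately. The equivalence between the two formulations $m s^*s = s$ and $m \geq s$ of the conclusion is immediate from the definition of the natural partial order: if $m s^*s = s$ then $s = m \cdot s^*s$ with $s^*s \in E(S)$, giving $s \leq m$; conversely, $s \leq m$ means $s = me$ for some $e \in E(S)$, and since $s^*s = em^*me \leq e$ one checks $ms^*s = mes^*s = s\cdot s^*s = s$. So the real content is the equivalence between FL and the covering condition.

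For the forward direction, assume $(S,K)$ is FL with constants $C \geq 1$ and finite $K_1 \subset K$. Given $R \geq 0$, Proposition~\ref{prop_fl_vs_qi}~(\ref{fl_implies_qi}) guarantees that any $s$ with $d(s^*s, s) \leq R$ in $\Lambda(S,K)$ satisfies $d_{K_1}(s^*s, s) \leq RC$ in $\Lambda(S, K_1)$. A geodesic in the latter graph gives $s^*s = x_0, x_1, \ldots, x_n = s$ with $n \leq RC$ and $x_{i+1} = k_i x_i$ for $k_i \in K_1$, so concatenating yields $s = (k_n \cdots k_1) \cdot s^*s$. Setting $m := k_n \cdots k_1$ lands $m$ in the finite set $F_R := \bigcup_{p=1}^{RC} K_1^p \subset S$, and $m s^*s = s$ as required. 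For the degenerate case $n = 0$, i.e., $s = s^*s$ a projection, the path argument degenerates and one must augment $F_R$ by a finite set dominating $E(S)$; this is where the main difficulty lies (see below).

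For the reverse direction, apply the hypothesis with $R = 1$ to obtain a finite set $F_1 \subset S$. Given $y = kx$ with $x \mathcal{L} y$ and $k \in K$, the trick is to construct an auxiliary element that is a ``single step'' away from its own projection. Set $s := k \cdot xx^*$. Since $y = kx \mathcal{L} x$ forces $x \in D_{k^*k}$, we get $k^*k \cdot xx^* = xx^*$, and therefore $s^*s = xx^* k^*k xx^* = xx^*$. Since $s = k \cdot s^*s$, we have $d(s^*s, s) \leq 1$, so by hypothesis there exists $m \in F_1$ with $m xx^* = k xx^*$. Multiplying on the right by $x$ and using $xx^*x = x$, this collapses to $mx = kx = y$. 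Thus every such $y$ equals $m x$ for some $m$ in the fixed finite set $F_1$. Since $S = \langle K \rangle$, each of the finitely many $m \in F_1$ admits some expression as a word in $K$; letting $K_1 \subset K$ be the finite union of all letters used and $C$ the maximum word length, we recover exactly the FL condition.

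The main obstacle I expect is the handling of the $s = s^*s$ case in the forward direction: at distance zero one needs a \emph{finite} collection of elements of $S$ that dominates all of $E(S)$ in the partial order, which is not a tautological consequence of FL. In concrete examples this is fine (in the free inverse semigroup on one generator, the pair $\{aa^*, a^*a\}$ dominates every projection; in F-inverse semigroups the identity alone suffices), and I would formalize the general case by combining Proposition~\ref{prop_fl_vs_qi}~(\ref{fl_implies_fg_mod_e}) with a careful analysis of projections of the form $k^*k$ with $k \in K_1$ to extract a finite dominating set. The reverse direction, by contrast, is a clean local-to-global argument: the $R = 1$ instance of the hypothesis already encodes FL.
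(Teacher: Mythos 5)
Your reverse direction is correct and complete: the auxiliary element $s:=kxx^*$, with $s^*s=xx^*$ and $d(s^*s,s)\leq 1$, is exactly the reduction from arbitrary intra-$\mathcal{L}$-class edges $y=kx$ to pairs of the form $(s^*s,s)$ that the paper's own proof leaves implicit, so that half of your argument is fine (and in fact more detailed than the published one). The genuine gap is the one you flag yourself and then do not close: the forward direction for $s=s^*s\in E(S)$. This case cannot be set aside, since for $R=0$ the statement says precisely that some finite $F$ dominates every idempotent, and it is this diagonal case that gets used later (e.g.\ in the proof of Theorem~\ref{thm_mtrspc} one needs $t\in F$ with $tx=x$). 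Your sketched repair is not an argument: Proposition~\ref{prop_fl_vs_qi}~(\ref{fl_implies_fg_mod_e}) produces factorizations $s=k_d\dots k_1e$ of unbounded length with an uncontrolled idempotent $e$, and single letters $k^*k$ with $k\in K_1$ need not dominate a given idempotent. The worry is real: with the naive choice $F=\bigcup_{p=1}^{RC}K_1^p$ (which is what the paper writes), already in the bicyclic monoid with $K_1=\{a,a^*,0\}$, $C=1$, $R=1$, no element of $K_1$ satisfies $m\,aa^*=aa^*$, so the projection case genuinely requires an extra step.

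The step is short, and with it your strategy goes through. Since $S=\langle K\rangle$, write $e=k_n\dots k_1$ with $k_i\in K$, $n\geq 1$, and put $k:=k_n$. Then $kk^*e=e$: for $n\geq 2$ this follows from $e=ee^*=k_n(k_{n-1}\dots k_1)(k_{n-1}\dots k_1)^*k_n^*$ and $kk^*k=k$, and for $n=1$ it is trivial. Consequently $(k^*e)^*(k^*e)=e\,kk^*e=e$, so $k^*e\,\mathcal{L}\,e$, and both $k^*e=k^*\cdot e$ and $e=k\cdot(k^*e)$ are edges of $\Lambda(S,K)$ inside $L_e$. Applying the FL condition to each edge gives $K_1$-words $w_1,w_2$ of length at most $C$ with $k^*e=w_1e$ and $e=w_2k^*e=(w_2w_1)e$, i.e.\ every idempotent is fixed by a $K_1$-word of length at most $2C$. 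Hence $F:=\bigcup_{p=1}^{\max\{RC,\,2C\}}K_1^p$ works for every $R\geq 0$, covering both your $n\geq 1$ path-relabeling case and the degenerate case $s\in E(S)$. So you correctly isolated a subtlety that the paper itself glosses over, but as submitted the forward direction is incomplete, and your proposed route for closing it would not suffice as stated.
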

    \begin{proof}
      Let $K_1 \subset K$ and $C \geq 1$ witness the FL condition of $(S, K)$ as in Definition~\ref{def_fl}. Given a radius $R > 0$ the set $F := \cup_{p = 1}^{RC} K_1^p$ is finite. Moreover, it labels all paths in $S$ in the alphabet $K$ and of length at most $R$. That is, whenever $d(s^*s, s) \leq R$ there is some $m \in F$ such that $ms^*s = s$, as required.

      For the converse, let $R = 1$ and let $F$ be as in the statement. As it is finite, there is a finite $K_1 \subset K$ such that every element $m \in F$ is a word in $K_1$. Putting $C$ as the maximum length of the words in $F$, it then follows that $K_1$ and $C$ witness the FL condition of $(S, K)$.
    \end{proof}
    \begin{remark} \label{rem:fl:cover}
      The preceding proposition gives a geometric meaning to condition FL. Indeed, let $C_R$ be the set of points of $S$ such that $d(s^*s, s) \leq R$. We might see $C_R$ as a \textit{cylinder}, in which case admitting a finite labeling asserts that $C_R$ has a finite \textit{cover}, meaning every point $s \in C_R$ is below some point in $F$, where $F$ is a finite set in $S$ as in the preceding proposition (see also Figure~\ref{fig:fl:cover} below).
      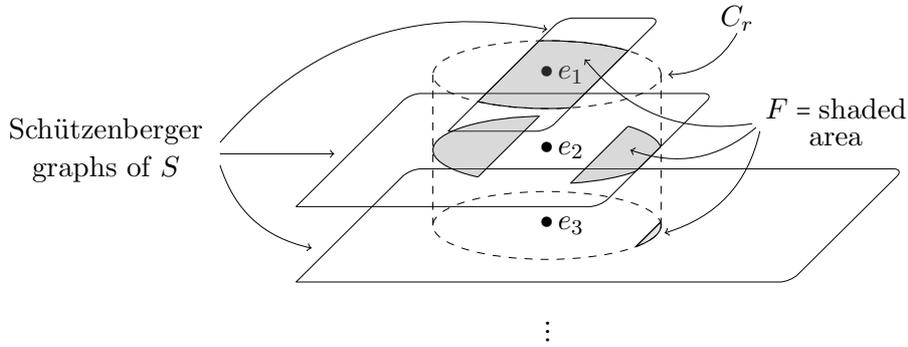
\begin{figure}[ht]
        \centering
        \begin{tikzpicture}
          \node at (-3, 0) {\text{Sch\"{u}tzenberger}};
          \node at (-3, -0.5) {\text{graphs of $S$}};
          \draw [very thin, rounded corners] (1.5, 0) -- (2.8, 0) -- (4.3, 1.5) -- (3, 1.5) -- (1.5, 0);
          \draw [very thin, rounded corners] (-0.5, -1) -- (3.5, - 1) -- (5, 1.5 - 1) -- (1, 1.5 - 1) -- (-0.5, 0 - 1);
          \draw [very thin, rounded corners] (-0.5, -2) -- (-0.5 + 6.5, - 2) -- (-0.5+6.5+1.5, 1.5 - 2) -- (-0.5 + 1.5, 1.5 - 2) -- (-0.5, 0 - 2);
          \node at (3, 0.75) {$\bullet \, e_1$};
          \node at (3, 0.75 - 1) {$\bullet \, e_2$};
          \node at (3, 0.75 - 2) {$\bullet \, e_3$};
          \node at (2.81, 0.75 - 3.4) {$\vdots$};
          \draw [very thin, ->] (-1.5, -0.15) to[bend left] (2.8, 1.35);
          \draw [very thin, ->] (-1.5, -0.30) to (0, -0.3);
          \draw [very thin, ->] (-1.5, -0.45) to[bend right] (-0.3, 0.45 - 2);
          \draw [very thin, dashed] (2.8, 0.75) ellipse (15mm and 4.5mm);
          \draw [very thin, dashed] (2.8, 0.75 - 2) ellipse (15mm and 4.5mm);
          \draw [very thin, dashed] (2.8 - 1.5, 0.75) -- (2.8 - 1.5, 0.75 - 2);
          \draw [very thin, dashed] (2.8 + 1.5, 0.75) -- (2.8 + 1.5, 0.75 - 2);
          \node at (5.3, 1.5) {$C_r$};
          \draw [very thin, ->] (5.3, 1.3) to[bend left] (2.8 + 1.6, 0.75); 
          \begin{scope}
            \clip (1.5, 0) -- (2.8, 0) -- (4.3, 1.5) -- (3, 1.5) -- (1.5, 0);
            \draw [fill = gray, fill opacity = 0.3] (2.8, 0.75) ellipse (15mm and 4.5mm);
          \end{scope}
          \begin{scope}
            \clip (2.8, 0.75) ellipse (15mm and 4.5mm);
            \draw (2.8, 0) -- (4.3, 1.5);
            \draw (3, 1.5) -- (1.5, 0);
          \end{scope}
          \begin{scope}
            \clip (1.5, -1) -- (3, 0.5) -- (1, 1.5 - 1) -- (-0.5, -1) -- (1.5, -1);
            \draw [fill = gray, fill opacity = 0.3] (2.8, 0.75 - 1) ellipse (15mm and 4.5mm);
          \end{scope}
          \begin{scope}
            \clip (2.8, 0.75 - 1) ellipse (15mm and 4.5mm);
            \draw (1.5, -1) -- (3, 0.5);
          \end{scope}
          \begin{scope}
            \clip (2.8, -1) -- (4.3, 0.5) -- (5, 1.5 - 1) -- (3.5, - 1) -- (2.8, -1);
            \draw [fill = gray, fill opacity = 0.3] (2.8, 0.75 - 1) ellipse (15mm and 4.5mm);
          \end{scope}
          \begin{scope}
            \clip (2.8, 0.75 - 1) ellipse (15mm and 4.5mm);
            \draw (2.8, -1) -- (4.3, 0.5);
            \draw (3.5, - 1) -- (5, 1.5 - 1);
          \end{scope}
          \begin{scope}
            \clip (2.8, 0.75 - 2) ellipse (15mm and 4.5mm);
            \draw (3.5, -2) -- (5, -0.5);
            \draw [fill = gray, fill opacity = 0.2] (3.5, -2) -- (5, -0.5) -- (7, -0.5) -- (5.5, -2) -- (3.5, -2);
          \end{scope}
          \begin{scope}
            \clip (3.5, -2) -- (5, -0.5) -- (7, -0.5) -- (5.5, -2) -- (3.5, -2);
            \draw (2.8, 0.75 - 2) ellipse (15mm and 4.5mm);
          \end{scope}
          \node at (6.6, 0.3) {$F = \text{shaded}$};
          \node at (6.6, 0.3-0.4) {$\text{area}$};
          \draw [very thin, ->] (5.5, 0.1) to[bend left] (3.3, 0.95);
          \draw [very thin, ->] (5.55, 0.05) to[bend left] (3.9, 0.75 - 1);
          \draw [very thin, ->] (5.6, 0) to[bend left] (4.35, 0.65 - 2);
        \end{tikzpicture}
        \caption{Rough depiction of the \textit{finite labeling} condition as stated in Proposition~\ref{prop:fl:newchr}. Given a radius $R \geq 0$, the \textit{cylinder} $C_R$ is the set of points of $S$ such that $d\left(s^*s, s\right) \leq R$.}
        \label{fig:fl:cover}
      \end{figure}
    \end{remark}

    \begin{example} \label{ex:fl:grpcase}
      Following Proposition~\ref{prop:fl:newchr} observe that the condition FL is trivially satisfied when $S = G$ is a countable group. Indeed, recall that any countable and discrete group $G$ can be equipped with a unique (up to coarse equivalence) proper and right invariant metric $d$, yielding a metric space $(G, d)$ of bounded geometry (cf., \cite[Proposition~5.5.2]{BO08}). Therefore the $r$-ball $B_r(1) \subset G$ is a finite set and it trivially satisfies condition FL as stated in Proposition~\ref{prop:fl:newchr}. Moreover, observe such a metric on an inverse semigroup needs not be unique up to coarse equivalence, and, hence, the dependence of the FL condition on the generating set $K$ reflects this fact.  
    \end{example}

    \begin{example}\label{ex_fl}
      The semigroup $S = (\mathbb{N}, \min)$, where $n \cdot m := \min\{n, m\}$, does not admit a finite labeling. Indeed, the generating set must necessarily be $K = S = E(S)$, and hence $\Lambda_S=\sqcup_{n\in\mathbb{N}}\{n\}$, i.e., $\Lambda_S$ is formed by countably many isolated points that are pairwise at infinite distance, where any vertex $\{n\}$ has infinitely loops labeled by $m\geq n$. Moreover, for any finite $K_1 \subset K$ there is an $n \in \mathbb{N}$ such that $n \geq k$ for any $k \in K_1$. Letting $x = y := n$ in Definition~\ref{def_fl} it then follows that $S$ is not FL, even though any $\mathcal{L}$-class consisting of a single point is trivially FL. Similarly, the graph $\Lambda_T$ associated to the semigroup $T := (\mathbb{N}, \max)$ consists of infinitely many isolated points pairwise at infinite distance, and any vertex $\{n\}$ has precisely $n$ loops labeled by $k\in\{1,\dots,n\}$. In this case, $\Lambda_T$ is FL because $T$ is F-inverse. Furthermore, observe that neither $S$ nor $T$ are finitely generated.
    \end{example}

    We turn to the proof of the main result, i.e., we want to compare the algebras $\mathcal{R}_S$ and $C_u^*(\Lambda_S)$. The following preliminary result shows that one inclusion always holds. Note that, in general, the uniform Roe algebra decomposes as a direct sum over the algebras associated to the corresponding Sch\"utzenberger classes
    \[
     C^*_u(\Lambda_S)\cong \prod_{e\in E(S)} C^*_u(\Lambda_{L_e})\;.
    \]
    Moreover, for any $\mathcal{L}$-class $L$, the orthogonal projections $p_L$ onto the subspace $\ell^2(L)$ are central.
    
    \begin{proposition}\label{prop_easy_containment}
      Let $S$ be an inverse semigroup. Let $\Lambda_S$ be the disjoint union of the left-Sch\"{u}tzenberger graphs of $S$. Then $\mathcal{R}_S \subset C_u^*(\Lambda_S)$.
    \end{proposition}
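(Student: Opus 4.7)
The plan is to verify the containment at the level of generators of $\mathcal{R}_S$, i.e., to show that both $\ell^{\infty}(S)$ and every $V_s$ sit inside $C_u^*(\Lambda_S)$, since both algebras are norm-closed and $\mathcal{R}_S$ is generated (as a C*-algebra) by $\ell^{\infty}(S) \cdot \{V_s\}_{s\in S}$.

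First, I would observe that any $f \in \ell^{\infty}(S)$, viewed as a multiplication operator on $\ell^2(S)$, is diagonal in the canonical basis $\{\delta_x\}_{x \in S}$. Hence its matrix entries $\langle \delta_y, f \delta_x\rangle$ vanish whenever $x \neq y$, so $p(f) = 0$. In particular, $f \in C^*_{\mathrm{u,alg}}(\Lambda_S) \subset C_u^*(\Lambda_S)$.

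Next, I would bound the propagation of the partial isometry $V_s$ for an arbitrary $s \in S$. By the definition of $V_s$, the only non-zero matrix coefficients $\langle \delta_y, V_s\delta_x\rangle$ occur when $x \in D_{s^*s}$ and $y = sx$. By Lemma~\ref{lemma_action_lclasses} we then have $x \mathcal{L} sx$, so $x$ and $y = sx$ lie in the same connected component of $\Lambda_S$, making $d_S(x,y)$ finite. Moreover, Lemma~\ref{lemma_dist_lclasses}~(\ref{dist_lclasses_1}) yields
\begin{equation}
d_S(x, sx) \leq d_S(s^*s, s) \leq \ell(s) < \infty. \nonumber
\end{equation}
Therefore $p(V_s) \leq \ell(s)$, which means $V_s \in C^*_{\mathrm{u,alg}}(\Lambda_S) \subset C_u^*(\Lambda_S)$.

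Combining the two observations, every product $fV_s$ with $f \in \ell^{\infty}(S)$ and $s \in S$ lies in $C_u^*(\Lambda_S)$, and the conclusion $\mathcal{R}_S \subset C_u^*(\Lambda_S)$ follows by taking $*$-algebraic combinations and norm closure. No real obstacle arises here; the argument is essentially the obvious propagation estimate, and it is worth emphasising that it uses only that $S$ is generated by $K$ (so that $\ell(s) < \infty$ for every single $s$), not that $K$ is finite or that $(S,K)$ admits a finite labeling. The reverse inclusion, which would require representing arbitrary finite-propagation operators via the $V_s$, is precisely where the FL hypothesis of Theorem~\ref{introthm_mtrspc} becomes indispensable, but that lies beyond the present statement.
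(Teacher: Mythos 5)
Your argument is correct and follows essentially the same route as the paper's proof: diagonal operators have propagation zero, and the propagation of $V_s$ is controlled via Lemma~\ref{lemma_dist_lclasses}, so every generator $fV_s$ has bounded propagation. The only cosmetic difference is that you settle for the bound $p(V_s)\leq \ell(s)$, while the paper notes the exact value $p(V_s)=d_S(s^*s,s)$; either suffices for the containment.
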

    \begin{proof}
      Note first that any $f \in \ell^{\infty}(S)$ corresponds to a diagonal operator and hence has propagation $0$. Moreover, the propagation of the generators $V_s$, where $s \in S$, is given by 
      \[
       p(V_s):=\sup_{x\in D_{s^*s}} d(x,sx) = d(s^*s,s)\,.
      \]
      In fact, since $s^*s\in D_{s^*s}$ we have that $p(V_s)\geq d(s^*s,s)$. The reverse inequality follows from Lemma~\ref{lemma_dist_lclasses}.
    \end{proof}
    \begin{theorem}\label{thm_mtrspc}
      Let $S = \langle K \rangle$ be a countable and discrete inverse semigroup such that $\Lambda_S$ is of bounded geometry. Then the following conditions are equivalent:
      \begin{enumerate}
        \item \label{thm_fl} The pair $(S, K)$ admits a finite labeling (see Definition~\ref{def_fl}).
        \item \label{thm_calg} The $*$-algebras $\mathcal{R}_{S,\mathrm{alg}}$ and $C^*_{u,\mathrm{alg}}(\Lambda_S)$ are equal, and hence
        \[
         \mathcal{R}_S =C^*_u(\Lambda_S)\;.
        \]
      \end{enumerate}
    \end{theorem}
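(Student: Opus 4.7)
The plan is to treat the two implications separately, using that Proposition~\ref{prop_easy_containment} already gives $\mathcal{R}_{S,\mathrm{alg}} \subset C^*_{u,\mathrm{alg}}(\Lambda_S)$ (each generator $fV_s$ has propagation at most $d(s^*s,s)$) and that Proposition~\ref{prop:fl:newchr} provides a convenient geometric reformulation of FL. For $(\ref{thm_fl}) \Rightarrow (\ref{thm_calg})$, I would fix $T \in C^*_{u,\mathrm{alg}}(\Lambda_S)$ of propagation $\leq R$ and let $K_1 \subset K$ and $C \geq 1$ witness the FL condition. For every pair $(x, y)$ with $T_{y, x} \neq 0$ one has $d(x, y) \leq R$; iterating the FL condition along a geodesic produces a word $w(x, y) \in K_1^{\leq CR}$ with $w(x, y) \cdot x = y$ (viewed as an element of $S$). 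Choosing one such word for each pair defines a choice function on $\mathrm{supp}(T)$ taking values in the finite set $K_1^{\leq CR}$. For each $w \in K_1^{\leq CR}$, define
\[
 f_w(y) := \begin{cases} T_{y, w^*y} & \text{if } y \in D_{ww^*} \text{ and } w(w^*y, y) = w, \\ 0 & \text{otherwise.} \end{cases}
\]
Since $|f_w(y)| \leq \|T\|$, this lies in $\ell^\infty(S)$. A direct matrix computation, using that $x \in D_{w^*w}$ together with $wx = y$ forces $x = w^*y$ (and $y \in D_{ww^*}$), then shows
\[
 T = \sum_{w \in K_1^{\leq CR}} f_w V_w \in \mathcal{R}_{S,\mathrm{alg}}.
\]

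For $(\ref{thm_calg}) \Rightarrow (\ref{thm_fl})$ I would argue by contrapositive. Assume $(S, K)$ does not admit a finite labeling, so by Proposition~\ref{prop:fl:newchr} there exists $R \geq 0$ such that the \emph{cylinder}
\[
 C_R := \{\, s \in S \mid d(s^*s, s) \leq R \,\}
\]
cannot be covered by any finite $F \subset S$ (in the sense of that proposition). Consider
\[
 T := \sum_{s \in C_R} M_{s^*s,\, s},
\]
so that $T_{y, x} = 1$ precisely when $y \in C_R$ and $x = y^*y$. Bounded geometry of $\Lambda_S$ gives at most $|B_R|$ nonzero entries per column and at most one per row, so $T$ is a well-defined bounded operator of propagation $\leq R$ and hence lies in $C^*_{u,\mathrm{alg}}(\Lambda_S)$. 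If one could write $T = \sum_{i=1}^N f_i V_{t_i} \in \mathcal{R}_{S,\mathrm{alg}}$, evaluating on $\delta_{s^*s}$ and reading off the $s$-entry for each $s \in C_R$ would give
\[
 1 = (T\delta_{s^*s})(s) = \sum_{\substack{i \,:\, t_i s^*s = s \\ s^*s \in D_{t_i^* t_i}}} f_i(s).
\]
Since for inverse semigroups $t_i s^*s = s$ is equivalent to $s \leq t_i$ (which in turn forces $s^*s \leq t_i^* t_i$), some $t_i$ must satisfy $t_i \geq s$ for every $s \in C_R$; hence the finite set $F = \{t_1, \ldots, t_N\}$ would cover $C_R$, contradicting the choice of $R$. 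The C*-algebra equality $\mathcal{R}_S = C^*_u(\Lambda_S)$ then follows by taking norm closures.

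The main obstacle, in my view, is organizing the $(\ref{thm_fl}) \Rightarrow (\ref{thm_calg})$ decomposition so that the $f_w$'s are simultaneously well-defined, bounded, and recover $T$ exactly; this turns out to hinge on the injectivity of left multiplication by $w$ on its domain, which converts the apparent multiplicity of words sending $x$ to $y$ into a genuine choice-function issue rather than a substantive algebraic one.
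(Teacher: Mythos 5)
Your proof is correct, and it splits into one direction that mirrors the paper and one that does not. Your argument for (\ref{thm_fl}) $\Rightarrow$ (\ref{thm_calg}) is essentially the paper's: the paper also fixes, for each pair $x,y$ at distance at most $R$, a single label $t_{x,y}$ taken from the finite set $F$ of Proposition~\ref{prop:fl:newchr}, and packages the matrix entries of $T$ into functions $\xi_s \in \ell^\infty(S)$ so that $T = \sum_{s \in F} \xi_s V_s$; your version with words $w \in K_1^{\leq CR}$ and the choice function $w(x,y)$ is the same decomposition (indeed your concatenation-along-a-geodesic step is exactly how Proposition~\ref{prop:fl:newchr} is proved), and your observation that $wx=y$ with $x \in D_{w^*w}$ forces $x = w^*y$ is precisely what makes the $\xi_s$/$f_w$ well defined there too. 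For (\ref{thm_calg}) $\Rightarrow$ (\ref{thm_fl}) you genuinely diverge: the paper picks a sequence of edges $(x_n,y_n)$ that cannot be labeled by words in $K_n$ of length less than $n$, builds the propagation-one operator $T\delta_{x_n} = \delta_{y_n}$, and proves the uniform estimate $\|T - \sum f_i V_{s_i}\| \geq 1$ for every element of $\mathcal{R}_{S,\mathrm{alg}}$, so that $T \notin \mathcal{R}_S$; you instead take the single operator $T = \sum_{s \in C_R} M_{s^*s,s}$ supported on a cylinder and rule out an \emph{exact} finite representation $T = \sum f_i V_{t_i}$ via the order characterization $t_i s^*s = s \Leftrightarrow s \leq t_i$ of Proposition~\ref{prop:fl:newchr}. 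Both are valid; yours is arguably cleaner (no enumeration of $K$, no appeal to Proposition~\ref{prop_ex_fl}) and suffices verbatim for the equivalence as stated, since item (\ref{thm_calg}) asserts equality of the $*$-algebras. Be aware, though, that the paper's norm estimate is what delivers the stronger conclusion that the C*-algebras themselves differ when $(S,K)$ is not FL, which is the form used in Example~\ref{ex_c0} and in Theorem~\ref{introthm_mtrspc}; your construction recovers this with one extra line, since for any $\sum_{i=1}^N f_i V_{t_i}$ the failure of $F = \{t_1,\dots,t_N\}$ to cover $C_R$ produces an $s \in C_R$ with $\langle \delta_s, (T - \sum_i f_i V_{t_i})\delta_{s^*s}\rangle = 1$, whence $\|T - A\| \geq 1$ for every $A \in \mathcal{R}_{S,\mathrm{alg}}$ and $T \notin \mathcal{R}_S$. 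It would be worth adding that line explicitly.
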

    \begin{proof}
      (\ref{thm_fl}) $\Rightarrow$ (\ref{thm_calg}). Given an operator $T \in C^*_{u, alg}(\Lambda_S)$, say of propagation $R > 0$, let $F \subset S$ be as in Proposition~\ref{prop:fl:newchr}. Observe that for every pair $x, y \in S$ such that $d(x, y) \leq R$ there is $t \in F$ such that $tx = y$. Let $t_{x, y} \in F$ be such a possible choice and consider the functions $\xi_s \in \ell^{\infty}(S)$, where $s\in S$, defined by
      \begin{equation}
        \xi_s\left(y\right) :=
          \left\{ \begin{array}{lc}
            \langle \delta_y, T \delta_{s^*y} \rangle & \, \text{if} \;\; y \mathcal{L} s^*y \;\; \text{and} \;\; s = t_{s^*y, y}, \\[2mm]
            0 & \text{otherwise}.
          \end{array} \right. \nonumber
      \end{equation}
      We claim
      \begin{equation}
        T = \sum_{s \in S} \xi_s V_s. \nonumber
      \end{equation}
      First note that the sum above is finite, as $\xi_s = 0$ when $s \not\in F$.  Moreover, on the one hand, if $x$ and $y$ are not $\mathcal{L}$-related then $\langle \delta_y, T \delta_x \rangle = 0$ since $T$ is of bounded propagation. In addition, if $x \in D_{s^*s}$ then $x \mathcal{L} sx$ and, therefore, $sx \neq y$, which implies that $\langle \delta_y, (\sum_{s \in S} \xi_s V_s) \delta_x \rangle = 0$. On the other hand, if $x \mathcal{L} y$ then:
      \begin{align}
        \left\langle \delta_y, \left(\sum_{s \in S} \xi_s V_s\right) \delta_x \right\rangle 
                   = \left\langle \delta_y, \sum_{\substack{s \in F \\ x \in D_{s^*s}}} \xi_s\left(sx\right) \delta_{sx} \right\rangle 
                   = \sum_{\substack{s \in F \\ sx = y}} \xi_s\left(y\right) 
                   = \langle \delta_y, T \delta_x \rangle, \nonumber
      \end{align}
      since, by construction, there is exactly one $s \in F$ such that $sx = y$ and $\xi_s(y) \neq 0$, namely $s = t_{x, y}$.

      (\ref{thm_calg}) $\Rightarrow$ (\ref{thm_fl}). Suppose $S$ is not FL. Thus, by Proposition~\ref{prop_ex_fl}, $S$ is not finitely generated. Let $K = \left\{k_1, k_2, \dots\right\}$ be a numeration of $K$, and let $K_n := \left\{k_1, \dots, k_n\right\}$ be the first $n$ generators. Note that, since $S$ is not FL, for every $n \in \mathbb{N}$ there are points $x_n, y_n \in S$ such that $x_n \mathcal{L} y_n$, $y_n \in K x_n$ and $x_n$ and $y_n$ are not joined by a path labeled by $K_n$ of length less than $n$. Note, as well, that since $\Lambda_S$ is locally finite (as it is of bounded geometry) we may suppose that $x_n \neq x_{n'}$ for any $n \neq n'$. Consider the operator:
      \begin{equation}
        T \colon \ell^2\left(S\right) \rightarrow \ell^2\left(S\right), \quad T \delta_z :=
          \left\{
            \begin{array}{lc}
              \delta_{y_n} & \; \text{if} \; \; z = x_n, \nonumber \\[2mm]
              0 & \text{otherwise}.
            \end{array}
          \right. \nonumber
      \end{equation}
      The operator $T$ has finite propagation since $\sup_{n \in \mathbb{N}} d(x_n, y_n) \leq 1$, and, thus, $T \in C^*_{u, \mathrm{alg}}(\Lambda_S)$. Moreover, we will show that $T$ cannot be approximated by elements in $\mathcal{R}_{S, \mathrm{alg}}$ and, therefore, $T \not\in \mathcal{R}_S$. Indeed, given any $\sum_{i = 1}^m f_i V_{s_i} \in \mathcal{R}_{S, \mathrm{alg}}$, let $n \in \mathbb{N}$ be sufficiently large so that $s_i$ is a word in $K_n$ for every $i = 1, \dots, m$ and $n$ is greater than the maximum length of the elements $s_i$. In this case
      \begin{equation}
        \left|\left| T - \sum_{i = 1}^m f_i V_{s_i}\right|\right| \geq \left|\left| T\left(\delta_{x_n}\right) - \left(\sum_{i = 1}^m f_i V_{s_i}\right) \, \left(\delta_{x_n}\right)\right|\right|_{\ell^2\left(S\right)} = \left|\left| \delta_{y_n} - \sum_{\substack{i = 1 \\ x_n \in D_{s_i^*s_i}}}^m f_i\left(s_i x_n\right) \delta_{s_i x_n} \right|\right|_{\ell^2\left(S\right)} \geq 1, \nonumber
      \end{equation}
      where the last inequality follows from the fact that, by construction, $s_i x_n \neq y_n$ for all $i = 1, \dots, m$.
    \end{proof}
   
    Based on the strategy of the proof of the preceding theorem one can prove a similar result relating the uniform Roe algebra of an $\mathcal{L}$-class in $S$ and the corresponding corner of $\mathcal{R}_S$.
    \begin{theorem}\label{thm_mtrspc_lclasses}
      Let $S = \langle K \rangle$ be a countable and discrete inverse semigroup and let $L \subset S$ be an $\mathcal{L}$-class such that $\Lambda_L$ is of bounded geometry. Denote by $p_L$ be the orthogonal projection from $\ell^2(S)$ onto $\ell^2(L)$. Then the following conditions are equivalent:
      \begin{enumerate}
        \item \label{thm_mtrspc_lclasses_fl} The pair $(L, K)$ admits a finite labeling (see Definition~\ref{def_fl}).
        \item \label{thm_mtrspc_lclasses_calg} The C*-algebras $p_L \mathcal{R}_S p_L$ and $C^*_u(\Lambda_L)$ are equal.
      \end{enumerate}
    \end{theorem}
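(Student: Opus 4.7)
The plan is to adapt the proof of Theorem~\ref{thm_mtrspc} essentially verbatim, with every construction restricted to the $\mathcal{L}$-class $L$. Throughout, we identify $C^*_u(\Lambda_L)$ with its image in $\mathcal{B}(\ell^2(S))$ under extension by zero, i.e., with $p_L C^*_u(\Lambda_L) p_L$; this is consistent with the decomposition $C^*_u(\Lambda_S) \cong \prod_{e \in E(S)} C^*_u(\Lambda_{L_e})$ discussed before Proposition~\ref{prop_easy_containment}, in which each $p_{L_e}$ is a central projection.

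For the implication (\ref{thm_mtrspc_lclasses_fl}) $\Rightarrow$ (\ref{thm_mtrspc_lclasses_calg}), I would first establish the easy containment $p_L \mathcal{R}_S p_L \subset C^*_u(\Lambda_L)$: by Proposition~\ref{prop_easy_containment} we have $\mathcal{R}_S \subset C^*_u(\Lambda_S)$, and compressing by the central projection $p_L$ yields $p_L \mathcal{R}_S p_L \subset p_L C^*_u(\Lambda_S) p_L = C^*_u(\Lambda_L)$. For the nontrivial containment, I would first record the $\mathcal{L}$-class version of Proposition~\ref{prop:fl:newchr}: since the definition of FL for $(L, K)$ is entirely internal to $L$, the same argument proves that $(L, K)$ is FL if and only if for each $R \geq 0$ there is a finite set $F \subset S$ such that for every $x, y \in L$ with $d_{\Lambda_L}(x, y) \leq R$ one has $y = t x$ for some $t \in F$. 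Given $T \in C^*_{u, \mathrm{alg}}(\Lambda_L)$ of propagation $R$, fix such an $F$ and for each pair $(x, y)$ with $d(x,y) \leq R$ pick $t_{x,y} \in F$ with $t_{x,y} x = y$. For $s \in F$ define $\xi_s \in \ell^\infty(S)$ exactly as in Theorem~\ref{thm_mtrspc},
\begin{equation*}
\xi_s(y) := \begin{cases} \langle \delta_y, T \delta_{s^*y}\rangle & \text{if } y, s^*y \in L,\ y\,\mathcal{L}\,s^*y,\ \text{and } s = t_{s^*y,\, y},\\ 0 & \text{otherwise}. \end{cases}
\end{equation*}
The matrix-coefficient check carried out in the proof of Theorem~\ref{thm_mtrspc} adapts without change to show $T = p_L \bigl(\sum_{s \in F} \xi_s V_s\bigr) p_L$, placing $T$ in $p_L \mathcal{R}_{S, \mathrm{alg}} p_L$.

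For the implication (\ref{thm_mtrspc_lclasses_calg}) $\Rightarrow$ (\ref{thm_mtrspc_lclasses_fl}), I would argue by contrapositive and reproduce the second half of the proof of Theorem~\ref{thm_mtrspc}, now inside $L$. If $(L, K)$ is not FL then $K$ must be infinite (a finite $K$ makes $(L, K)$ trivially FL with $K_1 = K$, $C = 1$); enumerate $K = \{k_1, k_2, \ldots\}$ and write $K_n := \{k_1, \ldots, k_n\}$. Failure of FL on $L$ produces, for each $n$, a pair $x_n, y_n \in L$ with $x_n \,\mathcal{L}\, y_n$, $y_n \in K x_n$, and such that no $K_n$-labeled path of length $< n$ joins $x_n$ to $y_n$ in $\Lambda_L$. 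Local finiteness of $\Lambda_L$ (which follows from bounded geometry) lets us pass to a subsequence with the $x_n$ pairwise distinct. Define $T \in \mathcal{B}(\ell^2(L)) \subset \mathcal{B}(\ell^2(S))$ by $T \delta_{x_n} = \delta_{y_n}$ and $T \delta_z = 0$ otherwise; then $T$ has propagation $\leq 1$ in $\Lambda_L$ and so belongs to $C^*_{u, \mathrm{alg}}(\Lambda_L)$. The same estimate as in Theorem~\ref{thm_mtrspc}, applied to the compression $p_L \sum_{i=1}^m f_i V_{s_i} p_L$ once $n$ is chosen so large that all $s_i$ are words in $K_n$ shorter than $n$, gives $\|T - p_L \sum_i f_i V_{s_i} p_L\| \geq 1$, whence $T \notin p_L \mathcal{R}_S p_L$, contradicting (\ref{thm_mtrspc_lclasses_calg}).

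The only place where genuine care is required is the second direction: in Theorem~\ref{thm_mtrspc} the argument began by invoking Proposition~\ref{prop_ex_fl} to pass from ``$S$ not FL'' to ``$S$ not finitely generated'', but here we only know $(L, K)$ is not FL, not $(S, K)$. The saving observation, which I would state explicitly, is that a finite generating set would force every single $\mathcal{L}$-class (hence in particular $L$) to be FL, so our hypothesis already gives $|K| = \infty$ and the enumeration argument proceeds as before.
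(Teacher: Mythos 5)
Your proposal is correct and is essentially the paper's own argument: the paper gives no separate proof of Theorem~\ref{thm_mtrspc_lclasses}, remarking only that it follows the strategy of Theorem~\ref{thm_mtrspc}, and what you write is precisely that strategy localized to the class $L$ (including the $\mathcal{L}$-class version of Proposition~\ref{prop:fl:newchr} and the compression by the central projection $p_L$). Your explicit observation replacing the appeal to Proposition~\ref{prop_ex_fl} --- that a finite $K$ would make $(L,K)$ trivially FL, so failure of FL forces $K$ to be infinite --- correctly handles the one step of the original proof that does not transfer verbatim.
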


    We apply next Theorem~\ref{thm_mtrspc} to the classes of inverse semigroups satisfying condition FL (cf., Proposition~\ref{prop_ex_fl}).
    \begin{corollary}\label{cor_finv}
      Let $S$ be a finitely generated inverse semigroup or F-inverse semigroup such that $\Lambda_S$ is of bounded geometry. Then $\mathcal{R}_S = C^*_u(\Lambda_S) = \ell^{\infty}(S) \rtimes_r S$.
    \end{corollary}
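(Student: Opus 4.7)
The corollary is essentially an assembly of results already established in the paper, so my plan is to simply chase the equalities and verify that each hypothesis is met in each of the two cases.

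First I would handle the equality $\mathcal{R}_S = \ell^{\infty}(S) \rtimes_r S$, which requires no assumption on finite generation or F-inverse-ness at all: it is the unnamed theorem at the end of Section~\ref{sec_crossed} (proved via the partial isometry $W$ of conjugation by which implements the isomorphism $\ell^{\infty}(S) \rtimes_r S \cong 1 \otimes \mathcal{R}_S$). So this part is for free in both cases and need not be revisited.

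The crux is to show $\mathcal{R}_S = C^*_u(\Lambda_S)$. For this I would invoke Theorem~\ref{thm_mtrspc}, whose hypotheses are (i) that $\Lambda_S$ is of bounded geometry and (ii) that $(S,K)$ admits a finite labeling. In the first case (finitely generated with symmetric generating set $K$), bounded geometry of $\Lambda_S$ follows immediately from Proposition~\ref{prop_dist_bndgeo} (with the explicit bound $|B_R(x)|\leq|K|^R$), and the FL condition is trivial by part~(\ref{class_fl_fg}) of Proposition~\ref{prop_ex_fl} (take $C=1$, $K_1=K$). In the F-inverse case, bounded geometry is assumed in the hypothesis, and the FL condition is precisely the content of part~(\ref{class_fl_finv}) of Proposition~\ref{prop_ex_fl}. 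Hence the hypotheses of Theorem~\ref{thm_mtrspc} are verified in both scenarios, yielding $\mathcal{R}_S = C^*_u(\Lambda_S)$.

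There is no real obstacle in this proof: each step is a direct citation of a previously established result, and the only minor bookkeeping is pointing out which previous proposition or theorem supplies which hypothesis in each of the two listed classes of inverse semigroups. Consequently, the write-up should be quite short, essentially of the form: ``Combine the theorem of Section~\ref{sec_crossed} with Theorem~\ref{thm_mtrspc}, noting that Proposition~\ref{prop_dist_bndgeo} together with Proposition~\ref{prop_ex_fl} ensures both the bounded geometry of $\Lambda_S$ and the FL condition in either case.''
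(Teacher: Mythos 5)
Your proposal is correct and is exactly the argument the paper intends: the corollary is stated without a separate proof precisely because it is the combination of the crossed-product theorem of Section~2.1, Theorem~\ref{thm_mtrspc}, Proposition~\ref{prop_dist_bndgeo} (bounded geometry in the finitely generated case) and Proposition~\ref{prop_ex_fl} (condition FL in both cases). Your verification of the hypotheses in each of the two cases matches the paper's route, so nothing further is needed.
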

    \begin{example}\label{ex_c0}
      Recall from Example~\ref{ex_fl} that the semigroup $S = (\mathbb{N}, \min)$ does not admit a finite labeling. Thus, by Theorem~\ref{thm_mtrspc}, we have $\mathcal{R}_S \neq C^*_u(\Lambda_S)$. Indeed, observe that $\Lambda_S=\sqcup_{n\in\mathbb{N}}\{n\}$  and, therefore, $C^*_u(\Lambda_S) = \ell^{\infty}(\mathbb{N})$. On the other hand, 
      $\mathcal{R}_{S,\mathrm{alg}}=c_{\mathrm{fin}}(\mathbb{N})$ (sequences with finite support) and, hence, 
      $\mathcal{R}_S = c_0(\mathbb{N})$.
    \end{example}

  \section{Quasi-isometric invariants}\label{sec_qiinv}
  In this final section we study some of the \textit{large scale properties} of the graph $\Lambda_S$ constructed in Section~\ref{sec_dist} (see also~\cite{GK13} and references therein) and relate these with C*-properties of the reduced semigroup C*-algebra $C^*_r(S)$ and the uniform Roe algebra $C^*_u(\Lambda_S)$. Recall that a property $\mathcal{P}$ is said to be a \textit{quasi-isometric} invariant if, given two quasi-isometric extended metric spaces $(X, d_X)$ and $(Y, d_Y)$ such that $X$ has $\mathcal{P}$, then so does $Y$. We will be particularly interested in two notions, namely \textit{amenability} and \textit{property A}.
  
  According to Day's original definition in \cite{D57}, a semigroup $S$ is amenable if there exists a probability measure on $S$ which is invariant under taking preimages
  (see also~\cite{DN78,GK17}). In the context of inverse semigroups this condition splits into two conditions called {\em domain measurability} and {\em localization} (see \cite[Section~4.1]{ALM19} for details). The former condition captures the dynamical invariance of the measure: $S$ is {\em domain measurable} if there exists a finitely additive (total) probability measure $\mu\colon \mathcal{P}(S)\to [0,1]$ satisfying
  \begin{equation}\label{eq:DM}
   \mu(sA)=\mu(A)\quadtext{for all} s\in S\; \text{and} \; A \subset D_{s^*s}\;.
  \end{equation}
  Amenability and domain measurability of inverse semigroups also allow a {\em F\o lner type  characterization} which we will need later in this section (see, e.g., 
  Theorem~\ref{thm_ame_qi}). For proofs of the following result and additional motivation we refer to Theorems~4.23 and 4.27 in \cite{ALM19}.
  \begin{theorem}\label{def_ame}
    Let $S$ be a countable and discrete inverse semigroup.
    \begin{enumerate}
      \item \label{def_ame_dom} $S$ is domain measurable if and only if for every finite $\mathcal{F} \subset S$ and $\varepsilon > 0$ there is some finite and non-empty $F \subset S$ such that $|s (F \cap D_{s^*s}) \setminus F| \leq \varepsilon |F|$ for every $s \in \mathcal{F}$. 
      \item \label{def_ame_ame} $S$ is amenable if and only if for every finite $\mathcal{F} \subset S$ and $\varepsilon > 0$ there is some finite and non-empty
      $F \subset S$ such that $F \subset D_{s^*s}$ and $|sF\setminus F | \leq \varepsilon |F|$ for all $s \in \mathcal{F}$.
    \end{enumerate}
  \end{theorem}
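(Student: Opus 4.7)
The plan is to follow the classical Day--Namioka strategy, adapted to the partial action of $S$ by left multiplication on domains. Parts (\ref{def_ame_dom}) and (\ref{def_ame_ame}) run in parallel; I will focus on (\ref{def_ame_dom}) and then indicate the adjustment needed for (\ref{def_ame_ame}).

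For the ``if'' direction of (\ref{def_ame_dom}), I would exhaust $S$ by an increasing sequence of finite sets $\mathcal{F}_n$, choose $\varepsilon_n \searrow 0$, and pick a Følner set $F_n$ for the pair $(\mathcal{F}_n, \varepsilon_n)$ provided by the hypothesis. Define the normalized counting states
\begin{equation}
\mu_n\colon \ell^\infty(S) \to \mathbb{C}, \qquad \mu_n(f) := \frac{1}{|F_n|} \sum_{x \in F_n} f(x). \nonumber
\end{equation}
By Banach--Alaoglu (equivalently, a free ultrafilter on $\mathbb{N}$) pass to a weak-* accumulation point $\mu$. For any fixed $s \in S$ and $A \subset D_{s^*s}$, the bijectivity of left multiplication by $s$ from $D_{s^*s}$ onto $D_{ss^*}$ gives $|sA \cap F_n| = |A \cap s^{-1}F_n|$, and the Følner bound $|s(F_n \cap D_{s^*s}) \setminus F_n| \leq \varepsilon_n |F_n|$ translates directly into $|\mu_n(\mathbf{1}_{sA}) - \mu_n(\mathbf{1}_A)| \leq \varepsilon_n \to 0$. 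Passing to the limit yields $\mu(sA) = \mu(A)$, establishing domain measurability.

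For the ``only if'' direction, let $\mu$ be a domain-measurable mean on $\ell^\infty(S)$. Fix a finite $\mathcal{F} \subset S$ and $\varepsilon > 0$. By Goldstine's theorem (or the standard weak-* density of finitely supported probability measures in the state space), one approximates $\mu$ weak-* by $\nu \in \ell^1(S)_{+,1}$ so that, viewed as elements of $\ell^1(S)$, the vectors $(\widetilde{s}\nu - \nu|_{D_{ss^*}})_{s \in \mathcal{F}}$ are weakly small in the finite product $\prod_{s \in \mathcal{F}} \ell^1(S)$, where $(\widetilde{s}\nu)(x) := \nu(s^*x)$ for $x \in D_{ss^*}$ and $0$ otherwise. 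Now invoke Namioka's convexity trick: since the weak and norm closures of the convex set of such tuples coincide in $\ell^1$, one can in fact find $\nu$ with $\|\widetilde{s}\nu - \nu|_{D_{ss^*}}\|_1 < \varepsilon^2$ for every $s \in \mathcal{F}$. Finally, apply the layer-cake identity
\begin{equation}
\nu \;=\; \int_0^{\infty} \mathbf{1}_{\{\nu > t\}} \, dt \nonumber
\end{equation}
together with the elementary identity $\|\mathbf{1}_U - \mathbf{1}_V\|_1 = |U \bigtriangleup V|$: integrating $|s(\{\nu > t\} \cap D_{s^*s}) \setminus \{\nu > t\}|$ in $t$ recovers $\|\widetilde{s}\nu - \nu|_{D_{ss^*}}\|_1$, and a pigeonhole argument then produces a threshold $t_0$ with $F := \{\nu > t_0\}$ satisfying the required inequality $|s(F \cap D_{s^*s}) \setminus F| \leq \varepsilon |F|$ simultaneously for all $s \in \mathcal{F}$.

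The proof of (\ref{def_ame_ame}) is identical in structure, except that Day's definition of amenability requires the invariance $\mu(sA) = \mu(A)$ for \emph{all} $A \subset S$ together with the localization condition that $\mu$ concentrates on each $D_{s^*s}$ with $s \in \mathcal{F}$ up to small error; this extra mass condition is exactly what permits demanding $F \subset D_{s^*s}$ at the level of Følner sets, rather than only $F \cap D_{s^*s}$. The principal technical difficulty in both parts is the partial nature of the action: left translation by $s$ is a bijection only between $D_{s^*s}$ and $D_{ss^*}$, so the translation operators on $\ell^1(S)$ are partial isometries rather than isometries, and the layer-cake step must carefully track the mass of $\nu$ living outside $D_{s^*s}$ so that it does not contaminate the symmetric-difference estimate on the level sets.
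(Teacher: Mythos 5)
There is nothing in the paper to compare against here: the paper does not prove this theorem, but quotes it, referring for the proof to Theorems~4.23 and 4.27 of \cite{ALM19}. Your Day--Namioka scheme is the standard route to such statements and is essentially the argument behind the cited result: counting measures on F{\o}lner sets converge weak-* to a domain-invariant mean, and conversely a mean is approximated weak-* by $\ell^1$ densities, weak smallness of the partial-translation differences is upgraded to norm smallness by convexity (Mazur/Namioka), and a F{\o}lner set is extracted from level sets via the layer-cake formula; the identity $s\left(\{\nu>t\}\cap D_{s^*s}\right)=\{\widetilde{s}\nu>t\}$, which you use implicitly, is exactly where the partial nature of the action is absorbed, and your choice of comparing $\widetilde{s}\nu$ with $\nu|_{D_{ss^*}}$ is the right one. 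Three points need repair before this is complete. (i) In the ``if'' direction the F{\o}lner bound for $s$ only yields the one-sided estimate $|A\cap F_n|\leq |sA\cap F_n|+\varepsilon_n|F_n|$, not the two-sided statement you assert; either symmetrize the exhaustion (use $s$ and $s^*$), or note that the limit inequality $\mu(A)\leq\mu(sA)$, applied to $s^*$ and the set $sA\subset D_{ss^*}$, already forces equality. (ii) The pigeonhole at the end must be run simultaneously over $s\in\mathcal{F}$: if every level set failed for some $s$, integrating the summed failure against $\int_0^\infty|\{\nu>t\}|\,dt=1$ gives a contradiction only if the individual $\ell^1$ bounds are of size $\varepsilon/|\mathcal{F}|$ (say), not $\varepsilon^2$; this is bookkeeping, but as written the constants do not close. (iii) Part (\ref{def_ame_ame}) is only gestured at, and your restatement of Day's definition is off: it is invariance under preimages, $\mu(s^{-1}A)=\mu(A)$ for all $A\subset S$, which in the inverse semigroup setting is equivalent to domain measurability together with the localization condition $\mu(D_{s^*s})=1$ for all $s$. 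To obtain $F\subset D_{s^*s}$ for all $s\in\mathcal{F}$ you must actually cut $\nu$ down to $\bigcap_{s\in\mathcal{F}}D_{s^*s}$ (which is again a domain $D_e$, $e$ being the product of the idempotents $s^*s$) and renormalize, checking that the discarded mass --- small by localization --- perturbs the $\ell^1$ differences only by a controlled amount; conversely, from F{\o}lner sets contained in the domains one must verify $\mu(D_{s^*s})=1$ in the limit and deduce preimage-invariance from it. With these steps filled in, your argument is sound and self-contained, which is more than the paper itself provides.
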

  In the case of groups amenability and domain measurability coincide and both F\o lner type characterizations in (\ref{def_ame_dom}) and (\ref{def_ame_ame}) above are equivalent. However, these two notions are different in general. 
  The stronger notion of amenability requires, in addition, that the F\o lner sets are localized {\em within} the corresponding domains.
  For example, $S := \mathbb{F}_2 \sqcup \{1\}$, where $1 \in S$ denotes a unit, is a domain measurable semigroup (take $F := \{1\}$), but it is not amenable (since the free group $\mathbb{F}_2$ is not amenable). Lastly, recall that amenability is a quasi-isometric invariant for groups (see~\cite[Theorem~3.1.5]{NY12}).
  \begin{theorem}\label{ame_qiinv_gr}
    Let $G$ and $H$ be quasi-isometric finitely generated groups. If $H$ is amenable then so is $G$.
  \end{theorem}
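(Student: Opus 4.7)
The plan is to use the metric reformulation of group amenability and transport F\o lner sets across the quasi-isometry. For a finitely generated group, amenability (Theorem~\ref{def_ame}(\ref{def_ame_ame}) in the group case) is equivalent to the metric F\o lner condition: for every $R,\varepsilon>0$ there is a finite non-empty $F$ in the Cayley graph with $|\partial_R F|/|F|<\varepsilon$, where $\partial_R F:=\{x\mid d(x,F)\le R\}\setminus F$. This is the standard geometric restatement, obtained by replacing the finite ``test set'' $\mathcal{F}\subset H$ by the metric ball of radius $R$ in the generating set, and conversely using that any finite set of group elements sits inside some ball. Thus it suffices to show that the metric F\o lner property is a quasi-isometric invariant for bounded-geometry metric spaces.

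Fix a quasi-isometry $\phi\colon G\to H$ with constants $M,C$ and coarse density radius $R_0$, and build a quasi-inverse $\psi\colon H\to G$ by choosing, for each $y\in H$, some $\psi(y)\in G$ with $d_H(\phi(\psi(y)),y)\le R_0$. Using bounded geometry of both Cayley graphs, $\psi$ can be perturbed within a bounded distance so as to be uniformly $K$-to-one for some constant $K$, that is $|\psi^{-1}(x)|\le K$ for every $x\in G$. Given target parameters $R,\varepsilon>0$, set $R':=MR+2C+2R_0$ and $\varepsilon':=\varepsilon/K^2$, use amenability of $H$ to pick a metric F\o lner set $F\subset H$ for $(R',\varepsilon')$, and define $F':=\psi(F)\subset G$. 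Two cardinality estimates then close the argument: first, $|F|/K\le|F'|\le|F|$, since $\psi$ is $K$-to-one and surjects onto $F'$; second, $|\partial_R F'|\le K\,|\partial_{R'}F|$, obtained by pushing $\partial_R F'$ through $\phi$, which lands inside $\partial_{R'}F$ by the quasi-isometry inequalities together with the slack $R_0$, and by noting that $\phi$ itself is uniformly finite-to-one by bounded geometry. Combining these bounds yields $|\partial_R F'|/|F'|\le K^2\varepsilon'=\varepsilon$, so the metric F\o lner condition passes from $H$ to $G$.

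The main obstacle is purely bookkeeping: arranging $\psi$ to be uniformly finite-to-one, and verifying that metric $R$-boundaries in $G$ map under $\phi$ into metric $R'$-boundaries in $H$ for some $R'$ depending only on $M,C,R_0$. The delicate point is that a point $x'\in G\setminus F'$ close to $x$ must correspond under $\phi$ to a point near $\phi(x)$ that genuinely lies outside $F$; this requires $F'=\psi(F)$ to be ``saturated'' enough that ``outside $F'$'' faithfully reflects ``outside $F$'' after enlarging the radius by $R_0$. Once this is in place, the Lipschitz behaviour of $\phi$ controls the F\o lner ratio, and letting $(R,\varepsilon)$ range proves metric amenability, hence amenability, of $G$.
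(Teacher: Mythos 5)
Your overall strategy --- reformulate amenability of a finitely generated group as a metric F\o lner condition and transport F\o lner sets through the quasi-isometry --- is the standard one. (Note that the paper does not prove this statement itself; it recalls it from \cite[Theorem~3.1.5]{NY12}. Its own, more general argument, Theorem~\ref{thm_ame_qi}, first makes the quasi-isometry surjective and then takes \emph{preimages} of F\o lner sets.) However, your key counting step fails as stated. Writing $\partial_R F := \mathcal{N}_R F \setminus F$, you claim that $\phi$ maps $\partial_R F'$ into $\partial_{R'} F$, where $F' = \psi(F)$. The quasi-isometry inequalities do give $\phi\left(\mathcal{N}_R F'\right) \subseteq \mathcal{N}_{R'} F$ for a suitable $R'$ depending only on $M, C, R_0$, but they do \emph{not} give $\phi(x') \notin F$ for $x' \notin F'$: since $\psi$ selects only one approximate preimage for each $h \in F$, a point $x' \in G \setminus \psi(F)$ lying within $R$ of $\psi(F)$ can perfectly well satisfy $\phi(x') \in F$. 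Hence the inclusion $\phi(\partial_R F') \subseteq \partial_{R'} F$ is false in general and the estimate $|\partial_R F'| \leq K\,|\partial_{R'} F|$ is unsupported; what your argument actually yields is only $|\mathcal{N}_R F'| \leq K K' (1 + \varepsilon') |F'|$, where $K'$ bounds the fibres of $\phi$, i.e.\ a fixed multiplicative constant rather than the F\o lner ratio $1 + \varepsilon$. You flag exactly this issue (the ``saturation'' of $F'$), but you do not resolve it, and $\psi(F)$ simply does not have the required saturation in general.

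The gap is fixable, but it needs one more idea, and any of the following would do. (i) Replace $F' = \psi(F)$ by the saturated set $F'' := \phi^{-1}\left(\mathcal{N}_{R_0} F\right)$: then $x' \notin F''$ genuinely forces $\phi(x') \notin F$, one still has $\psi(F) \subseteq F''$ and hence $|F''| \geq |F|/K$, and your counting closes since $\phi$ is uniformly finite-to-one by the lower quasi-isometry bound and bounded geometry. (ii) Argue as the paper does for Theorem~\ref{thm_ame_qi}: replace one space by its product with a finite group to make the quasi-isometry surjective (Lemma~\ref{dom_meas_onto} and Lemma~\ref{qi_inv_subgrp}) and pull F\o lner sets back as preimages, where the disjointness of $F$ from its $R'$-boundary is preserved automatically. (iii) Keep your sets and invoke the standard telescoping fact that if for every $R$ there are finite sets with $|\mathcal{N}_R F| \leq C_0 |F|$ for a constant $C_0$ independent of $R$, then the space already satisfies the full F\o lner condition. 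A minor remark: no perturbation of $\psi$ is needed, since $\psi(y_1) = \psi(y_2) = x$ forces $y_1, y_2 \in B_{R_0}(\phi(x))$, so $\psi$ is automatically uniformly finite-to-one by bounded geometry.
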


  \textit{Property A} is a metric property of a space that can be seen as a weak version of amenability (cf., \cite[Example~4.1.2]{NY12}).
  It was introduced by Yu in~\cite{Y00} and has been studied extensively since then (see, for example, \cite{BO08,NY12,R03}).
  \begin{definition}\label{def_a}
    Let $(X, d)$ be a metric space, and let $\{(X_e, d_e)\}_{e \in E}$ be a family of metric spaces.
    \begin{enumerate}
      \item $(X, d)$ has \textit{property A} if for every $\varepsilon > 0$ and $R > 0$ there is $C > 0$ and $\xi \colon X \rightarrow \ell^1(X)$ such that:
      \begin{enumerate}
        \item For every $x \in X$ the function $\xi_x$ is positive, has norm $1$ and has support contained in $B_C(x)$, the ball of center $x$ and radius $C$.
        \item $||\xi_x - \xi_y||_1 \leq \varepsilon$ for every $x, y \in X$ such that $d(x, y) < R$.
      \end{enumerate}
      \item The family $\{(X_e, d_e)\}_{e \in E}$ has \textit{uniform property A} if every $(X_e, d_e)$ satisfies property A with constants $C_e > 0$ which are uniformly bounded, i.e., $C := \sup_{e \in E} C_e < \infty$.
    \end{enumerate}
  \end{definition}
  Property A was introduced as a sufficient condition to ensure that $(X, d)$ coarsely embeds into a Hilbert space (see~\cite{BO08,NY12,R03,Y00} and the survey~\cite{W08}). It is a particularly interesting notion in the context of the so-called \textit{Baum-Connes} conjecture (see~\cite{Y00}). Even though the class of property A groups is plentiful (containing all amenable groups, free groups and being preserved by various constructions, see~\cite[Chapter~4]{NY12}) it turned out that not every group has property A. The only proofs known to the authors of the existence of non-property A groups were initially given by Gromov in~\cite{G03} and then by Osajda in~\cite{O14}, and rely on the coarse embedding of expander graphs into discrete groups (see~\cite{W08}). Even though these constructions are quite exotic, due to the rigid nature of groups, there are several constructions of non-property A metric spaces. For instance, the \textit{coarse disjoint union} of the Cayley graphs of $\{\mathbb{Z}_2^n\}_{n \in \mathbb{N}}$ is a metric space that does not have property A but is not of bounded geometry either (see~\cite{N07} and~\cite[Theorem~4.5.3]{NY12}).
  \begin{example}\label{ex_boxspc_a}
    An interesting class of examples are the so-called \textit{box spaces} (see~\cite[Example~4.4.7]{NY12} and~\cite{R03,Kh12}). We consider the construction in the context of inverse semigroups (see~\cite[Example~4.11]{A19}) as well as \cite{HLS02,W15} for a similar construction for groupoids). Let $G$ be a residually finite group, and let $\{N_i\}_{i \in \mathbb{N}}$ be a descending chain of normal subgroups of finite index of $G$ with trivial intersection. For technical reasons, let $N_\infty := \{1\}$. Then consider $S = \sqcup_{i \in \mathbb{N} \sqcup \{\infty\}} \, G/N_i$, equipped with the operation
    \begin{equation}
      q_i\left(g\right) \cdot q_j\left(h\right) := q_{\min\left\{i, j\right\}} \left(gh\right), \nonumber
    \end{equation}
    where $q_i \colon G \rightarrow G/N_i$ is the canonical quotient map. Observe in particular that the $\mathcal{L}$ and $\mathcal{R}$ relations are equal, i.e., $S$ is a \textit{bundle of groups}. 
  \end{example}  
  This construction is of particular interest because of the following known result (for a proof see~\cite[Theorem~4.4.6]{NY12}):
  \begin{proposition}\label{prop_ex_willett_1}
    Let $G$ be a residually finite group and $S$ be constructed as in Example~\ref{ex_boxspc_a}. Then $\Lambda_S$ has property A if and only if $G$ is amenable.
  \end{proposition}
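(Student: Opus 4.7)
The plan is to identify $\Lambda_S$ explicitly with (a slight enlargement of) the box space of $G$ and then invoke the classical box-space characterization of amenability. A direct computation with $q_i(g)^*q_i(g)=q_i(1)$ shows that the $\mathcal{L}$-classes of $S$ are precisely the cosets $G/N_i$ for $i\in\mathbb{N}\sqcup\{\infty\}$, and that the Sch\"utzenberger graph of $L_{q_i(1)}$ is canonically isomorphic to the Cayley graph $\mathrm{Cay}(G/N_i,q_i(K))$ (for $K$ a finite symmetric generating set of $G$ whose $q_\infty$-image forms part of the generating set of $S$). Consequently $\Lambda_S$ decomposes as
\begin{equation*}
  \Lambda_S \;\cong\; \mathrm{Cay}(G,K) \,\sqcup\, \bigsqcup_{i\in\mathbb{N}}\mathrm{Cay}(G/N_i,q_i(K)),
\end{equation*}
and since property~A of an extended-metric disjoint union is equivalent to \emph{uniform} property~A of its components, the statement reduces to the classical box-space dichotomy.

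For the easier direction $(\Leftarrow)$, given $R,\varepsilon>0$ pick a F\o lner set $F\subset G$ of diameter $D$ with $|kF\triangle F|\le\varepsilon|F|$ for all $k\in B_R(1_G)$. Because $\bigcap_i N_i=\{1\}$, there exists $i_0$ such that for every $i\ge i_0$ the projection $q_i$ is injective on $F\cdot B_R(1_G)$; then $q_i(F)\subset G/N_i$ is a F\o lner set of the same diameter and $(R,\varepsilon)$-constant, and the normalized indicators $\xi^{(i)}_x:=|F|^{-1}\mathbf{1}_{q_i(F)x}$ witness property~A with support radius $D$. The remaining (finitely many) components $i<i_0$ are finite groups, for which the uniform distribution $\xi^{(i)}_x\equiv|G/N_i|^{-1}\mathbf{1}_{G/N_i}$ trivially certifies property~A, and their bounded diameters can be absorbed into a larger common constant. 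The component $\mathrm{Cay}(G,K)$ is treated identically to the large-$i$ case.

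For the harder direction $(\Rightarrow)$, assume $\Lambda_S$ has property~A and let $\xi^{(i)}\colon G/N_i\to\ell^1(G/N_i)$ be uniform property~A kernels of support radius $C$ for the parameters $R,\varepsilon$. Pass to $\ell^2$ via the square-root trick $\eta^{(i)}_x:=\sqrt{\xi^{(i)}_x}$ and assemble them into a single vector $V^{(i)}:=\sum_x\eta^{(i)}_x\otimes\delta_x\in\ell^2(G/N_i)\otimes\ell^2(G/N_i)$. A direct calculation yields
\begin{equation*}
  \|(\mathrm{id}\otimes L_k)V^{(i)}-V^{(i)}\|_2^2 \;=\; \sum_{x\in G/N_i}\|\eta^{(i)}_{k^{-1}x}-\eta^{(i)}_x\|_2^2 \;\le\; |G/N_i|\,\varepsilon,
\end{equation*}
so $V^{(i)}/\sqrt{|G/N_i|}$ is $\sqrt{\varepsilon}$-almost-invariant in the representation $\mathrm{id}\otimes L\cong L^{\oplus|G/N_i|}$. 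A weighted pigeonhole argument (averaged over $k\in B_R$) extracts a summand $w^{(i)}\in\ell^2(G/N_i)$, supported in a ball of radius $C$ around some point $a^*_i\in G/N_i$, that is $\sqrt{|B_R|\varepsilon}$-almost-invariant under $L_k$ uniformly for $k\in B_R(1_{G/N_i})$. For $i$ large enough that $q_i$ is injective on $B_{C+R}(1_G)$ (and hence, by left-invariance, on any translate), one lifts $w^{(i)}$ through $q_i^{-1}$ to a finitely supported unit vector $\tilde w^{(i)}\in\ell^2(G)$ satisfying $\|\lambda_k\tilde w^{(i)}-\tilde w^{(i)}\|_2\le\sqrt{|B_R|\varepsilon}$ for all $k\in B_R(1_G)$. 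Letting $R\to\infty$ and $\varepsilon\to 0$ produces an almost-invariant sequence in the regular representation of $G$, so $G$ is amenable by Kesten's criterion.

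The main obstacle is the direction $(\Rightarrow)$: property~A kernels are not inherently equivariant, and naively lifting $\xi^{(i)}_{q_i(1)}$ to $G$ would produce a finitely supported vector with no a priori almost-invariance. The tensor assembly into $V^{(i)}$ and the subsequent Godement-style pigeonhole convert the \emph{pointwise closeness} of the kernels at neighbouring base points into a genuine \emph{almost-invariant unit vector} for the left regular representation of $G/N_i$, which then lifts faithfully to $G$ through the residually finite tower.
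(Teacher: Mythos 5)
Your proposal is correct in outline, but it follows a genuinely different route from the paper: the paper does not prove this proposition internally at all — it observes that $\Lambda_S$ is essentially the box space of $G$ together with a copy of $\mathrm{Cay}(G,K)$ and simply cites the classical result \cite[Theorem~4.4.6]{NY12}, whereas you reprove that theorem from scratch. Your reduction to uniform property A of the components is exactly Lemma~\ref{lemma_unif_a}, and both directions of your sketch are sound. For $(\Leftarrow)$, pushing a F\o lner set through $q_i$ once the injectivity radius exceeds the relevant diameter, and using the uniform distribution on the finitely many small quotients, is the standard argument; just mind the left/right bookkeeping: with kernels $\xi_x=|F|^{-1}\mathbf{1}_{q_i(F)x}$ on the left Cayley (Sch\"utzenberger) graph, whose metric is right-invariant, you need $|q_i(F)w\triangle q_i(F)|$ small for $w\in B_R$, i.e.\ a right F\o lner condition rather than the left condition $|kF\triangle F|\le\varepsilon|F|$ you wrote (harmless, since amenability provides both). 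For $(\Rightarrow)$, your tensor assembly does work: the component of $V^{(i)}$ in the $z$-th copy of the regular representation is the vector $x\mapsto\sqrt{\xi^{(i)}_x(z)}$, which is supported in $B_C(z)$, so the weighted pigeonhole over $z$ and $k\in B_R$ genuinely produces a unit vector supported in a $C$-ball that is $\sqrt{|B_R|\varepsilon}$-almost invariant, and the lift through $q_i$ is faithful once $q_i$ is injective on a ball of radius roughly $C+2R$ (slightly more than the $C+R$ you state, needed so that vanishing off the support pulls back correctly). This is a mild variant of the textbook argument, which instead averages the kernels over the finite quotient to produce a Reiter function on $G$; both hinge on the same two points — equivariance restored via the finite group structure, and faithfulness of $q_i$ on large balls — and your use of a single uniform constant $C$ (so that $i$ can be chosen after $C$) is precisely what property A of the extended metric space $\Lambda_S$ supplies; without that uniformity the choice of $i$ would be circular. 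Finally, note that the identification of the components with $\mathrm{Cay}(G/N_i,q_i(K))$ depends on the generating set of $S$ left implicit in Example~\ref{ex_boxspc_a}; your choice $q_\infty(K)$ (together with the idempotents) is the intended one and makes the identification legitimate.
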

  In particular, if $G = \mathbb{F}_2$, then every $\mathcal{L}$-class of $S$ has property A, since all of them are either finite or quasi-isometric to a tree, but the graph $\Lambda_S$ does not have property A. Moreover, the C*-algebra $C_r^*(S)$ is non-nuclear and non-exact (by Theorem~\ref{invsem_thm_a} below). Finally, we also mention that the box space construction is also useful to give examples of bounded geometry metric spaces not having property A. The corresponding uniform Roe algebras are examples of non-nuclear C*-algebras having amenable traces (see~\cite[Remark~4.14]{ALLW18R}).

  To finish this brief introduction to property A we recall the following known characterization that motivates our analysis in this section. For a proof see, e.g., \cite[Theorems~5.1.6 and 5.5.7]{BO08}.
  \begin{theorem}\label{thm_gr_a}
    Let $G$ be a finitely generated group. The following are equivalent:
    \begin{enumerate}
      \item The left Cayley graph of $G$ has property A.
      \item The uniform Roe algebra $\ell^{\infty}\left(G\right) \rtimes_r G$ is nuclear.
      \item The reduced group C*-algebra $C^*_r(G)$ is exact.
    \end{enumerate}
  \end{theorem}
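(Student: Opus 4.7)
The plan is to establish the triangle $(1) \Leftrightarrow (2) \Leftrightarrow (3)$ by passing through the intermediate notion of \emph{topological amenability} of the canonical action of $G$ on its Stone-\v{C}ech compactification $\beta G = \mathrm{Spec}(\ell^{\infty}(G))$. The two key bridges are: property A of $G$ is equivalent to amenability of the action of $G$ on $\beta G$ (a reformulation of the same finite-support approximate invariance data), and this amenability is equivalent to nuclearity of the reduced crossed product by a classical theorem of Anantharaman-Delaroche. I would organize the argument along the route $(1) \Rightarrow (2) \Rightarrow (3) \Rightarrow (1)$, of which only the last implication is substantive.

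For $(1) \Rightarrow (2)$, the first step is to reinterpret a property A family $\xi \colon G \to \ell^1(G)$ as a family of finitely supported probability measures on $\beta G$ that vary approximately equivariantly under left translation by $G$; this is exactly the Anantharaman-Delaroche definition that the action of $G$ on $\beta G$ is (topologically) amenable. Since $\ell^{\infty}(G) = C(\beta G)$ is commutative, hence nuclear, and since the action is amenable, the reduced crossed product $\ell^{\infty}(G) \rtimes_r G$ is nuclear by their theorem.

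For $(2) \Rightarrow (3)$ the argument is immediate: by Theorem~\ref{gr_thm} we have the inclusion $C^*_r(G) \subset \ell^{\infty}(G) \rtimes_r G$, and any C*-subalgebra of a nuclear C*-algebra is exact; hence $C^*_r(G)$ is exact.

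The hard step is $(3) \Rightarrow (1)$, which I expect to be the main obstacle since it does not reduce to any elementary manipulation. The classical route, due to Ozawa, uses that exactness of $C^*_r(G)$ upgrades to exactness of the group $G$ (in the sense that all reduced crossed products by $G$ preserve short exact sequences), which by Ozawa's boundary amenability theorem forces the action of $G$ on $\beta G$ to be topologically amenable. Reversing the dictionary of the first step then extracts property A functions for $G$. The subtlety here lies in producing approximately invariant means from the mere exactness of a single reduced C*-algebra, which requires Ozawa's construction of unital completely positive factorizations through nuclear algebras and is the deepest input in the proof.
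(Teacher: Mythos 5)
The paper does not actually prove this theorem: it is quoted as a known characterization with a citation to \cite[Theorems~5.1.6 and 5.5.7]{BO08}, and your outline is precisely the standard argument given in that reference --- translating property A into topological amenability of the action of $G$ on $\beta G$, invoking the Anantharaman-Delaroche/Brown--Ozawa theorem to get nuclearity of $\ell^{\infty}(G)\rtimes_r G = C(\beta G)\rtimes_r G$, passing to the subalgebra $C_r^*(G)$ for exactness, and using Ozawa's theorem for the return implication. Your sketch is correct; the only cosmetic remark is that Ozawa's argument goes directly from exactness of the single C*-algebra $C^*_r(G)$ (via nuclear factorizations of the inclusion into $\mathcal{B}(\ell^2(G))$) to amenability of the action on $\beta G$, so the intermediate notion of exactness of the group $G$ need not be invoked.
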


    \subsection{Amenability and domain measurability}\label{sec_qi_ame}
    In this subsection we show that domain measurability, defined via the invariance condition in Eq.~\eqref{eq:DM}, is a quasi-isometric invariant of the graph $\Lambda_S$. We begin showing that F\o lner sets introduced in Theorem~\ref{def_ame} may be localized within $\mathcal{L}$-classes.
    \begin{lemma}\label{dom_meas_lclasses}
      Let $S$ be domain measurable. Then for every $\varepsilon > 0$ and finite $\mathcal{F} \subset S$, there exists some $\mathcal{L}$-class $L \subset S$ and a finite non-empty $F \subset L$ such that $|s (F \cap D_{s^*s}) \cup F| \leq (1 + \varepsilon) |F|$ for all $s \in \mathcal{F}$.
    \end{lemma}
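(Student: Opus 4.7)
The plan is to reduce the lemma to the standard F\o{}lner characterization of domain measurability (Theorem~\ref{def_ame}~(\ref{def_ame_dom})) by localizing a global F\o{}lner set to an appropriate $\mathcal{L}$-class via a pigeonhole argument. The crucial observation that makes this reduction possible is Lemma~\ref{lemma_action_lclasses}: for any $s\in S$ and any $x\in D_{s^*s}$ one has $x\mathcal{L} sx$, so left multiplication by elements of $S$ preserves each $\mathcal{L}$-class (restricted to the appropriate domain).

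First, given $\varepsilon>0$ and finite $\mathcal{F}\subset S$, I would apply Theorem~\ref{def_ame}~(\ref{def_ame_dom}) with the sharper parameter $\delta := \varepsilon/|\mathcal{F}|$ to obtain a finite non-empty $F\subset S$ with $|s(F\cap D_{s^*s})\setminus F|\leq \delta\,|F|$ for every $s\in\mathcal{F}$. Then I would decompose
\begin{equation}
F \;=\; \bigsqcup_{e\in E_F} F_e, \quadtext{where} F_e := F\cap L_e \quadtext{and} E_F := \{e\in E(S) \mid F_e\neq\emptyset\}, \nonumber
\end{equation}
noting that $E_F$ is finite because $F$ is. By the $\mathcal{L}$-class preservation above, $s(F_e\cap D_{s^*s})\subset L_e$ for every $s\in\mathcal{F}$ and every $e\in E_F$, and the $L_e$ are pairwise disjoint. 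Hence, setting $\alpha^s_e := |s(F_e\cap D_{s^*s})\setminus F_e|$, the F\o{}lner estimate splits as
\begin{equation}
\sum_{e\in E_F} \alpha^s_e \;=\; |s(F\cap D_{s^*s})\setminus F| \;\leq\; \delta\,|F| \;=\; \delta\sum_{e\in E_F} |F_e| \quadtext{for each} s\in\mathcal{F}. \nonumber
\end{equation}

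The key step is a weighted averaging argument. Summing the previous inequality over $s\in\mathcal{F}$ yields
\begin{equation}
\sum_{e\in E_F} \Big(\sum_{s\in\mathcal{F}} \alpha^s_e\Big) \;\leq\; \delta\,|\mathcal{F}|\sum_{e\in E_F} |F_e|, \nonumber
\end{equation}
so by a standard averaging (pigeonhole) argument with weights $|F_e|$, there exists $e_0\in E_F$ such that $\sum_{s\in\mathcal{F}} \alpha^{s}_{e_0} \leq \delta\,|\mathcal{F}|\,|F_{e_0}|=\varepsilon|F_{e_0}|$. In particular, $\alpha^{s}_{e_0}\leq \varepsilon\,|F_{e_0}|$ for every $s\in\mathcal{F}$. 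Setting $L := L_{e_0}$ and $F' := F_{e_0}\subset L$, the identity
\begin{equation}
|s(F'\cap D_{s^*s})\cup F'| \;=\; |F'| + |s(F'\cap D_{s^*s})\setminus F'| \;\leq\; (1+\varepsilon)|F'| \nonumber
\end{equation}
gives exactly the desired conclusion.

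The main obstacle is to set up the pigeonhole argument correctly: a naive application would yield an $e$ depending on $s$, whereas we need a single $e_0$ that works for \emph{all} $s\in\mathcal{F}$ simultaneously. This is why one must start with the stronger F\o{}lner parameter $\delta=\varepsilon/|\mathcal{F}|$, and why the proof exploits the finiteness of $\mathcal{F}$. The $\mathcal{L}$-class preservation (Lemma~\ref{lemma_action_lclasses}) is what makes the decomposition $F=\sqcup_e F_e$ compatible with the dynamics, ensuring no mass from one class leaks into another under left multiplication.
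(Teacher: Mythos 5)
Your proof is correct: the decomposition of the F\o lner set over the $\mathcal{L}$-classes, the observation via Lemma~\ref{lemma_action_lclasses} that left multiplication by $s$ keeps $s(F_e\cap D_{s^*s})$ inside $L_e$ (so no mass leaks between classes), and the weighted pigeonhole after sharpening the F\o lner parameter to $\varepsilon/|\mathcal{F}|$ together yield exactly the stated estimate (modulo the trivial case $\mathcal{F}=\emptyset$). The paper does not spell this out---its proof simply defers to Lemma~5.2 of \cite{ALM19}, remarking that the relation $\approx$ used there is Green's $\mathcal{L}$-relation---so your argument is a correct, self-contained rendering of essentially the same localization idea.
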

    \begin{proof}
     The proof is essentially given in Lemma~5.2 of \cite{ALM19}. In fact, it is enough to note that by Lemma~\ref{lemma_action_lclasses} the equivalence relation $\approx$ used in~\cite{ALM19} is precisely Green's $\mathcal{L}$-relation.
    \end{proof}

    The following two lemmas allow us to reduce the proof of Theorem~\ref{thm_ame_qi} to the case of a surjective quasi-isometry.
    \begin{lemma}\label{dom_meas_onto}
      Let $S, T$ be quasi-isometric semigroups of bounded geometry. Then there is a finite group $G$ and a surjective quasi-isometry $\varphi \colon S \times G \rightarrow T$.
    \end{lemma}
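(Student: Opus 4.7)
The plan is to exploit the bounded geometry of $T$ to bound the ``multiplicity'' by which a quasi-isometry fails to be surjective, and then to absorb this finite multiplicity into an auxiliary finite group factor.

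Concretely, I would begin by fixing a quasi-isometry $\phi\colon S \to T$ with multiplicative constant $M \geq 1$, additive constant $C \geq 0$, and quasi-density radius $R > 0$ as in Definition~\ref{def_qi_emb}--\ref{def_qi_surj}. Since $T$ is of bounded geometry, the integer $m := \sup_{y \in T} |B_R(y)|$ is finite, so I choose any finite group $G$ with $|G| \geq m$ (e.g.\ $G = \mathbb{Z}/m\mathbb{Z}$) endowed with its Cayley-graph metric of diameter $\Delta < \infty$, and equip $S \times G$ with the natural product (extended) metric $d_S + d_G$. I would then well-order the countable set $S$, and for each $y \in T$ let $x_y \in S$ be the minimum element with $d_T(y,\phi(x_y)) \leq R$, which exists by quasi-density. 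Setting $A_x := \{y \in T : x_y = x\}$ gives a partition $T = \bigsqcup_{x \in S} A_x$ with $A_x \subset B_R(\phi(x))$, and therefore $|A_x| \leq m \leq |G|$ for every $x$.

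After fixing an enumeration $G = \{g_1, \ldots, g_{|G|}\}$ and, whenever $A_x \neq \emptyset$, an enumeration $A_x = \{y^x_1, \ldots, y^x_{n_x}\}$, I define
$$
  \varphi(x, g_i) \,:=\, \begin{cases} y^x_i & \text{if } 1 \leq i \leq n_x, \\ \phi(x) & \text{otherwise,} \end{cases}
$$
and set $\varphi(x, g_i) := \phi(x)$ for all $i$ in case $A_x = \emptyset$. Surjectivity is then immediate from $T = \bigsqcup_x A_x$. The crucial uniform bound $d_T(\varphi(x, g), \phi(x)) \leq R$ (valid for every $(x, g)$ by construction) together with the triangle inequality and the quasi-isometric estimates for $\phi$ yield
$$
  \tfrac{1}{M}\, d_S(x, x') - C - 2R \,\leq\, d_T(\varphi(x, g), \varphi(x', g')) \,\leq\, M\, d_S(x, x') + C + 2R.
$$
Using $|d_S(x, x') - d_{S \times G}((x, g), (x', g'))| \leq \Delta$, both inequalities transfer to bounds in terms of the product metric with additive constants only mildly larger than $2R + C$.

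I do not anticipate any significant obstacle: the argument is essentially the classical ``quasi-inverse plus padding'' construction for quasi-isometries between spaces of bounded geometry, and the only genuine input that distinguishes this setting from the group case is the need to verify that the extended-metric conventions (allowing $d_S(x, x') = \infty$ across distinct Sch\"utzenberger components) are preserved, which is automatic since both sides of the estimates are simultaneously infinite on different components.
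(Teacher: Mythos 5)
Your proof is correct and follows essentially the same route as the paper: bounded geometry of $T$ bounds the $R$-balls by some $m$, one takes a finite group $G$ with $|G|\geq m$, and the $G$-coordinate is used to parametrize the points of $T$ near $\phi(x)$, making $\varphi$ a surjective local (distance at most $R$) perturbation of $\phi$ and hence still a quasi-isometry. The only cosmetic difference is bookkeeping: you partition $T$ into fibers $A_x\subset B_R(\phi(x))$ of a choice function and enumerate them, whereas the paper fixes embeddings $\theta_t\colon B_R(t)\hookrightarrow G$ and sets $\varphi(s,g):=\theta_{\phi(s)}^{-1}(g)$ when defined.
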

    \begin{proof}
      Let $\phi \colon S \rightarrow T$ be a quasi-isometry. In particular, there is some $R > 0$ such that $d_T(t, \phi(S)) \leq R$ for every $t \in T$. Let $m > 0$ be an upper-bound of the cardinality of the $R$-balls in $T$, and let $G$ be any finite group of cardinality $m$. For any $t \in T$ let $\theta_t$ be an embedding of $B_R(t)$ into $G$ and consider the map
      \begin{equation}
        \varphi \colon S \times G \rightarrow T, \;\; \text{where} \;\; \varphi\left(s, g\right) :=
          \left\{
            \begin{array}{lc}
              \theta^{-1}_{\phi\left(s\right)}\left(g\right) & \; \text{if} \;\; g \in \text{im} \left(\theta_{\phi\left(s\right)}\right) \\
              \phi\left(s\right) & \; \text{otherwise}.
            \end{array}
          \right. \nonumber
      \end{equation}
      Then $\varphi$ is surjective and a local perturbation of $\phi$ and, thus, a quasi-isometry.
    \end{proof}
    \begin{lemma}\label{qi_inv_subgrp}
      Let $S$ be an inverse semigroup, and let $G$ be a finite group. If $S \times G$ is domain measurable (resp. amenable) then so is $S$.
    \end{lemma}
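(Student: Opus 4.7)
The plan is to transport an invariant finitely additive probability measure from $S\times G$ down to $S$ via the projection $\pi_S\colon S\times G\to S$, so that both statements (domain measurability and amenability) fall out of a single, essentially trivial, calculation.

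The setup will rest on one structural observation about $S\times G$: since $G$ is a group, its only idempotent is $1_G$, and therefore $E(S\times G)=E(S)\times\{1_G\}$. Consequently, for every $s\in S$, the domain of the element $(s,1_G)\in S\times G$ is
\[
 D_{(s^*s,1_G)}=D_{s^*s}\times G\,,
\]
and left multiplication acts diagonally, $(s,1_G)\cdot(A\times G)=sA\times G$, for any $A\subset S$; similarly one has $(s,1_G)^{-1}(A\times G)=s^{-1}A\times G$ for preimages.

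With this in hand, given a finitely additive probability measure $\mu$ on $S\times G$, I will define $\nu$ on $S$ by $\nu(A):=\mu(A\times G)$, which is automatically a finitely additive probability measure on $S$. For the domain measurable case, if $\mu$ is domain invariant and $A\subset D_{s^*s}$, then $A\times G\subset D_{(s^*s,1_G)}$ and
\[
 \nu(sA)=\mu(sA\times G)=\mu\bigl((s,1_G)(A\times G)\bigr)=\mu(A\times G)=\nu(A)\,,
\]
so $\nu$ witnesses domain measurability of $S$. The amenable case is entirely parallel using Day's preimage invariance: applying $\mu((s,1_G)^{-1} B)=\mu(B)$ with $B=A\times G$ yields $\nu(s^{-1}A)=\nu(A)$, and the $\sigma$-additivity/finite-additivity properties pass trivially through the map $A\mapsto A\times G$.

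I do not anticipate any serious obstacle: once the identification $D_{(s^*s,1_G)}=D_{s^*s}\times G$ is in place, the rest is a one-line verification, and incidentally the finiteness of $G$ is not needed at this step (it is what is used for the companion Lemma~\ref{dom_meas_onto}). If one prefers to argue directly from the F\o lner characterizations of Theorem~\ref{def_ame} instead of working with the measures, the same idea goes through combinatorially: slice a F\o lner set $\widetilde F\subset S\times G$ fiberwise as $\widetilde F=\sqcup_{g\in G}F_g\times\{g\}$, note that both sides of the F\o lner inequality split additively across $g\in G$, and apply a pigeonhole/averaging argument over the finite set $G$ to extract one fiber $F_{g_0}$ that is F\o lner in $S$ with the required parameter (absorbing a factor of $|\mathcal F|$ into the initial choice of tolerance). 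In this alternative route the finiteness of $G$ is what keeps the averaging step meaningful.
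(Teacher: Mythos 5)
Your argument is correct, but it proceeds differently from the paper's. You transport the invariant measure itself: using $D_{(s^*s,1_G)}=D_{s^*s}\times G$ and $\pi_S^{-1}(A)=A\times G$ you push the finitely additive probability measure forward along the coordinate projection, and both the domain-invariance identity $\nu(sA)=\nu(A)$ (for $A\subset D_{s^*s}$) and Day's preimage invariance $\nu(s^{-1}A)=\nu(A)$ follow in one line; as you note, this route does not even use finiteness of $G$, which is only needed in Lemma~\ref{dom_meas_onto}. The paper instead stays entirely within the F\o lner framework of Theorem~\ref{def_ame}: given $\varepsilon>0$ and finite $\mathcal{F}\subset S$, it takes an $(\varepsilon/|G|,\mathcal{F}\times G)$-F\o lner set $F_G\subset S\times G$, projects it to $F=\{s\in S\mid (s,g)\in F_G \text{ for some } g\}$, and bounds $|s(F\cap D_{s^*s})\setminus F|/|F|$ by the corresponding ratio for $F_G$ at the cost of a factor $|G|$, the localization $F\subset D_{s^*s}$ in the amenable case being automatic; your alternative fiberwise-slicing/averaging sketch is closer to this in spirit, though the paper takes the union of all fibers and pays $|G|$ rather than extracting a single good fiber and paying $|\mathcal{F}|$. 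What each approach buys: your measure-theoretic proof is shorter, works verbatim for infinite $G$, and makes the lemma a statement about pushforwards of invariant means; the paper's combinatorial proof keeps the whole of Section~\ref{sec_qi_ame} in the same F\o lner language that is then used directly in Proposition~\ref{fol_fl} and Theorem~\ref{thm_ame_qi}. Both proofs are complete, so there is no gap to flag.
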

    \begin{proof}
      Observe the product in $S \times G$ is defined coordinate-wise. In particular $(s, g)^*(s, g) = (s^*s, 1)$ and, therefore, $(x, g) \in D_{(s^*s, 1)}$ if and only if $x \in D_{s^*s}$.

      Given $\varepsilon > 0$ and a finite $\mathcal{F} \subset S$ let $F_G \subset S \times G$ witness the $(\varepsilon/|G|, \mathcal{F} \times G)$-F\o lner condition  of $S \times G$ (cf., Theorem~\ref{def_ame}). Consider the set
      \begin{equation}
        F := \left\{ s \in S \mid \left(s, g\right) \in F_G \;\; \text{for some} \; g \in G \right\}. \nonumber
      \end{equation}
      Then $F \subset S$ is clearly finite and non-empty. Furthermore $|F| \cdot |G| \geq |F \times G|$ and, for any $s \in \mathcal{F}$
      \begin{equation}
        \frac{\left| s \left(F \cap D_{s^*s}\right) \setminus F\right|}{\left|F\right|} \leq \frac{\left| \left(s, 1\right) \left(F \times G \cap D_{\left(s^*s, 1\right)} \right) \setminus F \times G\right|}{\left|F \times G\right|} \, \cdot \, \left|G\right| \leq \frac{\varepsilon}{\left|G\right|} \, \cdot \, \left|G\right| \leq \varepsilon. \nonumber
      \end{equation}
      Therefore $F$ witnesses the $(\varepsilon, \mathcal{F})$-F\o lner condition of $S$ (cf., Theorem~\ref{def_ame}).

      If, in addition, $S \times G$ is amenable, then the set $F$ constructed before is contained in $D_{s^*s}$.
    \end{proof}

    The following proposition gives an alternative F\o lner type characterization of domain measurability in the case the pair $(S, K)$ admits a finite labeling (cf., Theorem~\ref{def_ame}). Given $R > 0$ and $A \subset S$ we denote by $\mathcal{N}_R A$ the \textit{$R$-neighborhood of $A$}, i.e., the set of points $x \in S$ such that $d(x, A) \leq R$. Note that, in particular, if $A \subset L$, where $L \subset S$ is an $\mathcal{L}$-class, then $A \subset \mathcal{N}_R A \subset L$.
    \begin{proposition}\label{fol_fl}
      Let $S = \langle K \rangle$ be an inverse semigroup such that $(S, K)$ admits a finite labeling. Then $S$ is domain measurable if and only if for every $R > 0$ and $\varepsilon > 0$ there is a finite non-empty $F \subset L \subset S$, where $L$ is an $\mathcal{L}$-class, such that $|\mathcal{N}_R F| \leq (1 + \varepsilon) |F|$.
    \end{proposition}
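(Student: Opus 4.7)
The strategy is to prove both implications directly from the corresponding F\o lner type characterization of domain measurability (Theorem~\ref{def_ame}(\ref{def_ame_dom})), using Lemma~\ref{dom_meas_lclasses} to localize F\o lner sets within a single $\mathcal{L}$-class and the geometric reformulation of condition FL from Proposition~\ref{prop:fl:newchr} to translate the combinatorial F\o lner condition on $\mathcal{F}$-translates into the purely geometric one on $R$-neighborhoods.

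For the direction ($\Rightarrow$), fix $R,\varepsilon>0$. Apply Proposition~\ref{prop:fl:newchr} to obtain a finite set $F_0\subset S$ such that any $s\in S$ with $d(s^*s,s)\le R$ satisfies $s\le m$ for some $m\in F_0$. Feed $\mathcal{F}:=F_0$ and $\varepsilon/|F_0|$ into Lemma~\ref{dom_meas_lclasses} to get an $\mathcal{L}$-class $L$ and a finite non-empty $F\subset L$ with $|m(F\cap D_{m^*m})\setminus F|\le(\varepsilon/|F_0|)|F|$ for every $m\in F_0$. The key claim is the inclusion
\[
\mathcal{N}_RF \;\subset\; F\,\cup\,\bigcup_{m\in F_0} m\bigl(F\cap D_{m^*m}\bigr).
\]
To prove this, take $y\in\mathcal{N}_RF\setminus F$ and some $x\in F$ with $d(x,y)\le R$. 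Since $\mathcal{L}$-classes sit at infinite distance in $\Lambda_S$, $y\in L$, so there is a word $s=k_d\cdots k_1$ of length $d\le R$ with $sx=y$ and $x\in D_{s^*s}$, and the same word provides a path witnessing $d(s^*s,s)\le R$. By the FL condition there is $m\in F_0$ with $s\le m$, so $s^*s\le m^*m$, hence $D_{s^*s}\subset D_{m^*m}$ and $mx=ms^*sx=sx=y$. Summing over $m\in F_0$ and using the F\o lner estimate then gives $|\mathcal{N}_RF|\le(1+\varepsilon)|F|$.

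For the direction ($\Leftarrow$), given a finite $\mathcal{F}\subset S$ and $\varepsilon>0$, set $R:=\max_{s\in\mathcal{F}}d(s^*s,s)$, which is finite, and apply the hypothesis to obtain a finite non-empty $F\subset L$ with $|\mathcal{N}_RF|\le(1+\varepsilon)|F|$. For each $s\in\mathcal{F}$ and each $x\in F\cap D_{s^*s}$, Lemma~\ref{lemma_action_lclasses} gives $sx\in L$, and Lemma~\ref{lemma_dist_lclasses}(\ref{dist_lclasses_1}) yields $d(x,sx)\le d(s^*s,s)\le R$, so $sx\in\mathcal{N}_RF$. Therefore $s(F\cap D_{s^*s})\setminus F\subset \mathcal{N}_RF\setminus F$, and consequently $|s(F\cap D_{s^*s})\setminus F|\le\varepsilon|F|$ uniformly in $s\in\mathcal{F}$, which is exactly the F\o lner condition of Theorem~\ref{def_ame}(\ref{def_ame_dom}).

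The main subtlety, and the step where condition FL is essential, is the first direction: without FL there is no \emph{a priori} control on how many generators can label paths of length at most $R$, so a single F\o lner set for a finite subset of $S$ need not absorb the whole $R$-neighborhood. The finite cover $F_0$ produced by Proposition~\ref{prop:fl:newchr} is exactly the device that turns the geometric $R$-neighborhood into a finite union of algebraic translates $m(F\cap D_{m^*m})$, at which point the classical F\o lner estimate applies.
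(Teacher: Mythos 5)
Your proof is correct and follows essentially the same route as the paper: the direction that needs FL is handled by combining the localization Lemma~\ref{dom_meas_lclasses} with the finite cover from Proposition~\ref{prop:fl:newchr}, while the converse is the standard translate-versus-neighborhood comparison via Lemmas~\ref{lemma_action_lclasses} and~\ref{lemma_dist_lclasses}, exactly as the paper indicates. The only difference is that you make explicit the covering inclusion $\mathcal{N}_R F \subset F \cup \bigcup_{m \in F_0} m\left(F \cap D_{m^*m}\right)$ and the resulting counting estimate, which the paper's terse proof leaves implicit.
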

    \begin{proof}
      It is standard to show that if $S$ is domain measurable then the F\o lner condition in Theorem~\ref{def_ame} implies the corresponding inequality of the $R$-neighborhood (cf., \cite[Section~2]{ALLW18}). To show the reverse implication let $\varepsilon > 0$ and $R > 0$ be given, and let $\mathcal{F} \subset S$ be as in Proposition~\ref{prop:fl:newchr}. By Lemma~\ref{dom_meas_lclasses} there is an $\mathcal{L}$-class $L$ and a finite $F \subset L$  such that for every $s \in \mathcal{F}$
      \begin{equation}
        \frac{\left|s\left(F \cap D_{s^*s}\right) \cup F\right|}{\left|F\right|} \leq 1 + \frac{\varepsilon}{\left|F\right|}. \nonumber
      \end{equation}
      It follows that such an $F$ satiesfies the F\o lner type condition required.
    \end{proof}

    We can now generalize Theorem~\ref{ame_qiinv_gr} to the context of inverse semigroups.
    \begin{theorem}\label{thm_ame_qi}
      Let $S$ and $T$ be quasi-isometric inverse semigroups admitting finite labelings. If $T$ is domain measurable then so is $S$.
    \end{theorem}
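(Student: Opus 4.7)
The plan is to use the F\o lner-type characterization of domain measurability from Proposition~\ref{fol_fl} and transport F\o lner sets from $T$ back to $S$ via the quasi-isometry. By Lemmas~\ref{dom_meas_onto} and~\ref{qi_inv_subgrp}, it suffices to prove that $S\times G$ is domain measurable for a suitable finite group $G$; this allows us to replace the original quasi-isometry $\phi\colon S\to T$ by a \emph{surjective} one $\varphi\colon S\times G\to T$. After this reduction (which preserves bounded geometry and admits a natural finite labeling built from those of $S$ and $G$) I shall, from now on, pretend that $\varphi\colon S\to T$ is itself surjective with quasi-isometry constants $M, C$.

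A preliminary observation: because $\Lambda_S$ inherits bounded geometry from $\Lambda_T$ through the quasi-isometry, the lower QI inequality forces any two preimages of a common point to lie within $S$-distance $MC$, so $K := \sup_{t\in T}|\varphi^{-1}(t)|$ is finite. Equally important, the QI inequality preserves finiteness of distances, so $\varphi$ maps each $\mathcal{L}$-class of $S$ into a single $\mathcal{L}$-class of $T$, and consequently the preimage of an $\mathcal{L}$-class of $T$ is a disjoint union of $\mathcal{L}$-classes of $S$.

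Now I would fix $R_0>0$ and $\varepsilon>0$, set $R_1:=MR_0+C$, and apply Proposition~\ref{fol_fl} to $T$ to extract a finite $F'\subset L'\subset T$ with $|\mathcal{N}_{R_1}F'|\leq(1+\varepsilon/K)|F'|$. Set $F:=\varphi^{-1}(F')\subset S$ and split it as $F=\bigsqcup_i F_i$ according to the $\mathcal{L}$-classes of $S$ that compose $\varphi^{-1}(L')$. Because $\varphi$ maps the $R_0$-neighborhood of $F$ into $\varphi^{-1}(\mathcal{N}_{R_1}F')$, a counting argument using the fiber bound $K$ and the surjectivity inequality $|F|\geq|F'|$ yields
\begin{equation*}
  |\mathcal{N}_{R_0}F|-|F|\;\leq\;K\left(|\mathcal{N}_{R_1}F'|-|F'|\right)\;\leq\;\varepsilon\,|F|.
\end{equation*}
Since $\mathcal{N}_{R_0}F=\bigsqcup_i \mathcal{N}_{R_0}F_i$, an averaging across the $\mathcal{L}$-classes furnishes an index $i_0$ with $|\mathcal{N}_{R_0}F_{i_0}|\leq(1+\varepsilon)|F_{i_0}|$. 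This $F_{i_0}\subset L_{i_0}$ is exactly the F\o lner set required by Proposition~\ref{fol_fl}, so $S\times G$ is domain measurable, and then so is $S$ by Lemma~\ref{qi_inv_subgrp}.

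I expect the main subtlety to lie not in the pull-back estimate itself --- the multiplicity factor $K$ is absorbed by tightening $\varepsilon$ to $\varepsilon/K$ in $T$, which is standard in the group setting --- but rather in the fact that $\varphi^{-1}(F')$ will typically spill over several $\mathcal{L}$-classes of $S$; this is a genuinely inverse-semigroup phenomenon and forces the averaging step at the end. A secondary technical point is the verification that after the $S\mapsto S\times G$ reduction the new semigroup still admits a finite labeling and still has a graph of bounded geometry, both of which must hold to legitimately invoke Proposition~\ref{fol_fl}.
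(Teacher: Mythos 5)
Your argument is correct and is essentially the paper's own proof: the same reduction to a surjective quasi-isometry via Lemmas~\ref{dom_meas_onto} and~\ref{qi_inv_subgrp}, followed by pulling back the F\o lner sets provided by Proposition~\ref{fol_fl} with the inflated radius $MR+C$, and your $\varepsilon/K$ bookkeeping in fact makes explicit a ratio estimate that the paper states rather quickly. Two minor corrections: bounded geometry is not inherited through a quasi-isometry in general, so you should instead deduce it from the finite-labeling hypothesis (which uniformly bounds the vertex degrees of $\Lambda_S$ and hence the fiber size $K$); and the averaging over $\mathcal{L}$-classes is superfluous, since the lower quasi-isometry inequality sends infinite distances to infinite distances, so $\varphi^{-1}(F')$ is automatically contained in a single $\mathcal{L}$-class of $S$ --- which is exactly how the paper argues.
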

    \begin{proof}
      Let $\varphi \colon S \rightarrow T$ be a quasi-isometry. By Lemma~\ref{dom_meas_onto} and Lemma~\ref{qi_inv_subgrp} we may suppose that $\varphi$ is a surjective quasi-isometry. Let $M > 0$ and $C \geq 0$ be some constants such that for every $x, y \in S$
      \begin{equation}
        \frac{1}{M} d_S\left(x, y\right) - C \leq d_T\left(\varphi\left(x\right), \varphi\left(y\right)\right) \, \leq \, M \, d_S\left(x, y\right) + C, \nonumber
      \end{equation}
      where $d_S$ and $d_T$ denote the path distances in $\Lambda_S$ and $\Lambda_T$, respectively. By Proposition~\ref{fol_fl} and given $R > 0$ and $\varepsilon > 0$ there is some finite non-empty $F_T \subset T$ within some $\mathcal{L}$-class and with small $(MR + C)$-neighborhood, that is, $|\mathcal{N}_{MR + C} F_T| \leq (1 + \varepsilon) |F_T|$. We claim the set
      \begin{equation}
        F_S := \varphi^{-1}\left(F_T\right) \nonumber
      \end{equation}
      has a small $R$-neighborhood in $S$ and, therefore, by Proposition~\ref{fol_fl}, $S$ is domain measurable too. Observe that $F_S$ is non-empty since $\varphi$ is surjective. Furthermore, the distance between two any $\mathcal{L}$-classes (in either $\Lambda_S$ or $\Lambda_T$) is infinite, and thus, since $\varphi$ is a quasi-isometry, it takes $\mathcal{L}$-classes of $S$ onto $\mathcal{L}$-classes of $T$. Therefore $F_S$ is contained within some $\mathcal{L}$-class. Moreover, $F_S$ must be finite since both $S$ and $T$ are of bounded geometry and $\varphi$ is a quasi-isometry. Finally:
      \begin{equation}
        \frac{\left|\mathcal{N}_{R} F_S\right|}{\left|F_S\right|} \leq \frac{\left|\mathcal{N}_{MR + C} F_T\right|}{\left|F_T\right|} \leq 1 + \varepsilon, \nonumber
      \end{equation}
      which proves that $F_S$ has small $R$-neighborhood in $S$.
    \end{proof}
    \begin{remark}
      Note that Theorem~\ref{thm_ame_qi} only proves that domain measurability is a quasi-isometry invariant. On the contrary, amenability, as considered by Day, is not. Indeed, let $S := \{1\} \sqcup \mathbb{F}_2$ and $T := \mathbb{F}_2 \sqcup \{0\}$, where $\mathbb{F}_2$ denotes the free non-abelian group of rank $2$. Then $S$ is non-amenable, while $T$ is. Moreover, the map from $S$ to $T$ sending $1$ to $0$ and $\mathbb{F}_2 \ni \omega \mapsto \omega \in \mathbb{F}_2 \subset T$ is an isometry, and hence also a quasi-isometry.
    \end{remark}

    \subsection{Property A}\label{sec_scht_a} 
    Given an inverse semigroup $S$ recall first the construction of the $K$-labeled undirected graph $\Lambda_S=\sqcup_{e\in E(S)}\Lambda_{L_e}$ (see Section~\ref{sec_dist}), where the left-Schützenberger graphs $\Lambda_{L_e}$ correspond to the different connected components of $\Lambda_S$. We aim to characterize first when the left Sch\"{u}tzenberger graph $\Lambda_L$ of an $\mathcal{L}$-class $L \subset S$ has property A, generalizing the result for groups in Theorem~\ref{thm_gr_a}.

    To that end we first need to introduce the C*-algebraic notions of nuclearity and exactness. Both notions have distinct characterizations (see~\cite{BO08}) and have been studied extensively (see, for example, \cite{P12,M10,O00,O14,A16}). We will use the characterization in terms of contractive completely positive (c.c.p.) matrix approximations. A map between C*-algebras $\theta\colon \mathcal{A} \to \mathcal{B}$ is $\mathcal{A}$ is {\em nuclear} if for every $\varepsilon>0$ and every finite $\mathcal{F}\subset\mathcal{A}$ there exist c.c.p. maps $\varphi\colon \mathcal{A}\to M_n$ and $\psi\colon M_n\to \mathcal{B}$ such that $\|\psi\circ\varphi(A)-\theta(A)\|\leq \varepsilon$ for every $A\in\mathcal{F}$. A C*-algebra $\mathcal{A}$ is nuclear if the identity map $\mathrm{id}\colon\mathcal{A}\to\mathcal{A}$ is nuclear. A concretely represented C*-algebra $\mathcal{A} \subset \mathcal{B}\left(\mathcal{H}\right)$ is called \textit{exact} if the inclusion map $\iota\colon\mathcal{A}\to \mathcal{B}\left(\mathcal{H}\right)$ is nuclear.
    \begin{proposition}\label{prop_subalg_nuclear}
      Let $q > 0$ and let $X$ be a locally compact Hausdorff space. Every C*-subalgebra $\mathcal{A} \subset C_0(X) \otimes M_q$ is nuclear.
    \end{proposition}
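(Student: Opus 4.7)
The plan is to show that any $\mathcal{A} \subset C_0(X) \otimes M_q$ is $q$-subhomogeneous (i.e., every irreducible representation has dimension at most $q$), and then invoke the classical fact that subhomogeneous C*-algebras are type I and hence nuclear.

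First I would identify $C_0(X) \otimes M_q$ with $C_0(X, M_q)$. The irreducible representations of this algebra are, up to unitary equivalence, point evaluations $f \mapsto f(x)$ followed by the standard representation of $M_q$ on $\mathbb{C}^q$, so they all have dimension exactly $q$. Thus $C_0(X) \otimes M_q$ is $q$-subhomogeneous.

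The core step is the following pure-state extension argument. Let $\pi \colon \mathcal{A} \to \mathcal{B}(H)$ be an irreducible representation, pick a unit cyclic vector $\xi \in H$, and let $\phi := \langle \pi(\cdot)\xi, \xi \rangle$, which is a pure state on $\mathcal{A}$ whose GNS representation is (unitarily equivalent to) $\pi$. By Hahn--Banach, the convex weak-$*$ compact set of state extensions of $\phi$ to $C_0(X) \otimes M_q$ is nonempty, and by Krein--Milman it admits an extreme point $\tilde\phi$, which is automatically a pure state on $C_0(X) \otimes M_q$. Its GNS representation $\tilde\pi$ acts irreducibly on a Hilbert space $\tilde H$ with cyclic vector $\tilde\xi$, and $\dim \tilde H \leq q$ by subhomogeneity. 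The closed subspace $H_0 := \overline{\tilde\pi(\mathcal{A})\tilde\xi} \subset \tilde H$ is $\tilde\pi(\mathcal{A})$-invariant, and the restriction of $\tilde\pi|_\mathcal{A}$ to $H_0$ with cyclic vector $\tilde\xi$ realizes the GNS representation of $\tilde\phi|_\mathcal{A} = \phi$, hence is unitarily equivalent to $\pi$. Therefore $\dim H = \dim H_0 \leq \dim \tilde H \leq q$.

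Having established that $\mathcal{A}$ is $q$-subhomogeneous, the conclusion follows from standard C*-algebra theory: every subhomogeneous C*-algebra is type I, and every type I C*-algebra is nuclear (see, e.g., \cite{BO08}). The only nontrivial point of the argument is the pure-state extension piece, and the potential pitfall there is ensuring that the GNS representation of the restricted state $\tilde\phi|_\mathcal{A}$ embeds as a subrepresentation of $\tilde\pi|_\mathcal{A}$; this is handled by the cyclic-subspace observation above and does not require any further structural hypothesis on $X$ or $\mathcal{A}$.
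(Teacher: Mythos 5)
Your proof is correct and takes essentially the same route as the paper: establish that $\mathcal{A}$ is $q$-subhomogeneous and then invoke the standard fact that subhomogeneous C*-algebras are nuclear (the paper cites \cite[Proposition~2.7.7]{BO08}); the pure-state-extension argument you spell out is precisely the detail the paper compresses into the observation that every irreducible representation of $\mathcal{A}$ has dimension at most $q$. One cosmetic point: $\tilde\xi$ need not lie in $H_0 = \overline{\tilde\pi(\mathcal{A})\tilde\xi}$, so the cyclic vector implementing the GNS representation of $\phi$ is the orthogonal projection of $\tilde\xi$ onto $H_0$ (on the orthogonal complement of $H_0$ the restriction $\tilde\pi|_{\mathcal{A}}$ annihilates $\tilde\xi$), which changes nothing in the dimension count $\dim H = \dim H_0 \leq \dim \tilde H \leq q$.
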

    \begin{proof}
      Observe that every irreducible representation of $\mathcal{A}$ is of dimension, at most, $q$ and, thus, $\mathcal{A}$ is \textit{subhomogeneus} (see~\cite[Definition~2.7.6]{BO08}). Hence $\mathcal{A}$ is nuclear by~\cite[Proposition~2.7.7]{BO08}.
    \end{proof}
    
    The following result proves that for nuclearity it is enough to show that the identity map factors through nuclear algebras instead of matrices.
    \begin{proposition}\label{prop_nuclear_factor}
      A C*-algebra $\mathcal{R}$ is nuclear if and only if for every finite $\mathcal{F} \subset \mathcal{R}$ and $\varepsilon > 0$ there is a nuclear C*-algebra $\mathcal{A}$ and completely positive and contractive maps $\varphi \colon \mathcal{R} \rightarrow \mathcal{A}$ and $\psi \colon \mathcal{A} \rightarrow \mathcal{R}$ such that 
      $||\psi\circ\varphi(A)-A|| \leq \varepsilon$ for every $A \in \mathcal{F}$.
    \end{proposition}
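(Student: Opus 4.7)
The forward direction is immediate: if $\mathcal{R}$ is nuclear one simply takes $\mathcal{A}=M_n$, which is itself nuclear, and uses the definition of nuclearity directly.

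For the converse, the plan is a routine two-step factorization argument, chaining the hypothesis with the nuclearity of the intermediate algebra $\mathcal{A}$. Fix a finite set $\mathcal{F}\subset\mathcal{R}$ and $\varepsilon>0$. First, invoke the hypothesis with tolerance $\varepsilon/2$ and with the finite set $\mathcal{F}$ to obtain a nuclear C*-algebra $\mathcal{A}$ together with c.c.p. maps $\varphi\colon\mathcal{R}\to\mathcal{A}$ and $\psi\colon\mathcal{A}\to\mathcal{R}$ satisfying
\[
\|\psi\circ\varphi(A)-A\|\leq \tfrac{\varepsilon}{2}\qquad\text{for every } A\in\mathcal{F}.
\]
Next, observe that $\varphi(\mathcal{F})\subset\mathcal{A}$ is finite. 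Since $\mathcal{A}$ is nuclear, the definition of nuclearity provides $n\in\mathbb{N}$ together with c.c.p. maps $\varphi'\colon\mathcal{A}\to M_n$ and $\psi'\colon M_n\to\mathcal{A}$ such that
\[
\|\psi'\circ\varphi'(\varphi(A))-\varphi(A)\|\leq \tfrac{\varepsilon}{2}\qquad\text{for every } A\in\mathcal{F}.
\]

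Finally, I would compose these factorizations. Define $\Phi:=\varphi'\circ\varphi\colon\mathcal{R}\to M_n$ and $\Psi:=\psi\circ\psi'\colon M_n\to\mathcal{R}$; both are c.c.p. as compositions of c.c.p. maps. For any $A\in\mathcal{F}$, using the triangle inequality together with the fact that $\psi$ is contractive (hence $\|\psi(x)-\psi(y)\|\leq\|x-y\|$), we estimate
\[
\|\Psi\circ\Phi(A)-A\|\leq \|\psi(\psi'\circ\varphi'(\varphi(A))-\varphi(A))\|+\|\psi\circ\varphi(A)-A\|\leq \tfrac{\varepsilon}{2}+\tfrac{\varepsilon}{2}=\varepsilon,
\]
which is exactly the matricial c.c.p. approximation of the identity on $\mathcal{R}$ required by the definition of nuclearity.

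There is no real obstacle here; the only point to watch is that one must use contractivity of $\psi$ (not just complete positivity) in order to absorb the error coming from the second factorization through matrices. Everything else is formal manipulation with c.c.p. maps. It is worth remarking that the same argument shows more generally that nuclearity of C*-algebras is transitive along c.c.p. factorizations, which is the underlying reason the statement is true.
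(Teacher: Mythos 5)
Your argument is correct and is exactly the $\varepsilon/2$-argument the paper invokes (its proof is the one-line remark that the claim ``follows applying the nuclearity of $\mathcal{A}$ and an $\varepsilon/2$-argument''); you have simply written out the details, including the use of contractivity of $\psi$ to absorb the second error term. No changes needed.
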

    \begin{proof}
      The claim follows applying the nuclearity of $\mathcal{A}$ and an $\varepsilon/2$-argument.
    \end{proof}

    We prove now one of the main theorems of the section.
    \begin{theorem}\label{invsem_thm_l_a}
      Let $S = \langle K \rangle$ be a countable and discrete inverse semigroup, and let $L \subset S$ be an $\mathcal{L}$-class of bounded geometry. Let $p_L$ be the orthogonal projection from $\ell^2(S)$ onto $\ell^2(L)$. Consider the following statements:
      \begin{enumerate}
        \item \label{thm_l_a} The graph $\Lambda_L$ has property A.
        \item \label{thm_l_nuclear} The C*-algebra $p_L \mathcal{R}_S p_L$ is nuclear.
        \item \label{thm_l_r_exact} The C*-algebra $p_L \mathcal{R}_S p_L$ is exact.
        \item \label{thm_l_exact} The C*-algebra $p_L C^*_r(S) p_L$ is exact.
      \end{enumerate}
      Then (\ref{thm_l_a}) $\Rightarrow$ (\ref{thm_l_nuclear}) $\Rightarrow$ (\ref{thm_l_r_exact}) $\Leftrightarrow$ (\ref{thm_l_exact}). Moreover, if $(L, K)$ admits a finite labeling then all conditions are equivalent.
    \end{theorem}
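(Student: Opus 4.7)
I would establish the chain as follows. The easy half is immediate: (\ref{thm_l_nuclear}) $\Rightarrow$ (\ref{thm_l_r_exact}) because nuclearity implies exactness, and (\ref{thm_l_r_exact}) $\Rightarrow$ (\ref{thm_l_exact}) because exactness passes to C*-subalgebras through the inclusion $p_L C^*_r(S) p_L \subset p_L \mathcal{R}_S p_L$. Hence the substance of the theorem reduces to three points: (i) the implication (\ref{thm_l_a}) $\Rightarrow$ (\ref{thm_l_nuclear}); (ii) the reverse (\ref{thm_l_exact}) $\Rightarrow$ (\ref{thm_l_r_exact}); and (iii) under the FL hypothesis, closing the loop (\ref{thm_l_exact}) $\Rightarrow$ (\ref{thm_l_a}).

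For (\ref{thm_l_a}) $\Rightarrow$ (\ref{thm_l_nuclear}) I would adapt to the corner $p_L \mathcal{R}_S p_L$ the standard Ozawa--Guentner construction of c.c.p.\ Schur-multiplier approximations. Given $\varepsilon, R > 0$, pick a property~A kernel $\xi\colon L \to \ell^1(L)$ supported in balls of radius $C$, set $\eta_x := \sqrt{\xi_x}\in\ell^2(L)$, and define the positive-definite kernel $k(x,y) := \langle \eta_x, \eta_y\rangle$. The associated Schur multiplier $M_k$ is c.c.p., agrees with the identity on $C_{u,\mathrm{alg}}^*(\Lambda_L)$ up to $\varepsilon$ on operators of propagation $\leq R$, and its image has propagation $\leq 2C$, so it factors through a C*-subalgebra of $\ell^\infty(L)\otimes M_q$ for a $q$ uniformly bounding the ball cardinalities. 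Such a subalgebra is nuclear by Proposition~\ref{prop_subalg_nuclear}. The crucial verification is that $M_k$ preserves $p_L \mathcal{R}_S p_L$: for the generators, a direct matrix-unit calculation gives
\[
 M_k(f V_s) \;=\; (f \cdot h_s)\, V_s,\qquad h_s(y) := k\bigl(y, s^*y\bigr) \in \ell^\infty(L),
\]
so $M_k$ restricts to a c.c.p.\ endomorphism of $p_L\mathcal{R}_S p_L$ that nearly implements the identity on dense finite-propagation elements. Proposition~\ref{prop_nuclear_factor} then delivers nuclearity.

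For (\ref{thm_l_exact}) $\Rightarrow$ (\ref{thm_l_r_exact}), the key observation is that $p_L \mathcal{R}_S p_L$ is generated by $p_L C^*_r(S) p_L$ together with the commutative (hence nuclear) algebra $\ell^\infty(L) = p_L\ell^\infty(S)p_L$, and that the diagonal part is invariant under conjugation by the $V_s$, i.e., $V_s \ell^\infty(L) V_s^* \subset \ell^\infty(L)$ wherever defined. This realizes $p_L\mathcal{R}_Sp_L$ as (a corner of) a reduced inverse-semigroup crossed product of $\ell^\infty(L)$, and in this covariant situation the exactness of the coefficient algebra (automatic) together with that of $p_L C^*_r(S) p_L$ lifts to exactness of the generated algebra. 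Finally, under the FL hypothesis Theorem~\ref{thm_mtrspc_lclasses} identifies $p_L\mathcal{R}_S p_L = C^*_u(\Lambda_L)$, so that (\ref{thm_l_r_exact}) reads as exactness of the uniform Roe algebra of a bounded-geometry space, which by the standard Guentner--Higson--Ozawa theorem (see, e.g., \cite[Thm.~5.5.7]{BO08}) is equivalent to $\Lambda_L$ having property~A.

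The main obstacle I expect is (\ref{thm_l_exact}) $\Rightarrow$ (\ref{thm_l_r_exact}): stability of exactness under enlarging by a commutative subalgebra is not true in general, and the whole argument rests on the covariance relation $V_s \ell^\infty(L) V_s^* \subset \ell^\infty(L)$ together with the groupoid-/crossed-product structure of $p_L\mathcal{R}_Sp_L$. One has to be careful to formulate this crossed-product description in the inverse-semigroup setting (as opposed to the purely group case), and then invoke the appropriate Kirchberg--Wassermann-type result transporting exactness from the reduced semigroup C*-algebra to the associated crossed product.
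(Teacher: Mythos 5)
Your outline of (\ref{thm_l_a}) $\Rightarrow$ (\ref{thm_l_nuclear}) is essentially the paper's own argument dressed in Schur--multiplier language: the paper's composition $\psi\circ\varphi$ (compression to the balls $B_C(x)$ followed by averaging against the property A functions) is exactly such a multiplier, and your identity $M_k(fV_s)=(f\cdot h_s)V_s$ is the same computation the paper performs; your choice $\eta_x=\sqrt{\xi_x}$ with kernel $k(x,y)=\langle\eta_x,\eta_y\rangle$ is in fact the correct normalization (the $\ell^1$ functions alone do not give $\langle\xi_{sy},\xi_y\rangle\approx 1$), so this part is fine. The genuine gap is in (\ref{thm_l_exact}) $\Rightarrow$ (\ref{thm_l_r_exact}), which you yourself flag but do not resolve. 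Your plan is to realize $p_L\mathcal{R}_Sp_L$ as (a corner of) a reduced crossed product of $\ell^\infty(L)$ and to invoke a ``Kirchberg--Wassermann-type'' permanence theorem. For groups this route works because exactness of $C_r^*(G)$ implies exactness of the \emph{group}, and exact groups yield exact reduced crossed products with exact coefficients; here, however, the hypothesis is only that the C*-algebra $p_LC_r^*(S)p_L$ is exact, and there is no citable result converting exactness of (a corner of) a reduced inverse-semigroup or groupoid C*-algebra into exactness of the underlying action or of the associated crossed product --- that conversion is precisely what has to be proved, so the black box you appeal to does not exist in this generality. The paper sidesteps all of this with a short direct argument you are missing: from exactness of $p_LC_r^*(S)p_L$ take c.c.p.\ maps $\tilde{\varphi}\colon p_LC_r^*(S)p_L\to M_n$ and $\tilde{\psi}\colon M_n\to\mathcal{B}(\ell^2(L))$ approximating the finitely many $V_sp_L$, and factor the inclusion of $p_L\mathcal{R}_Sp_L$ through the nuclear algebra $\ell^\infty(L)\otimes M_n$ via $fV_s\mapsto f\otimes\tilde{\varphi}(V_sp_L)$ and $f\otimes b\mapsto f\,\tilde{\psi}(b)$; the diagonal is simply tensored along, no crossed-product machinery needed.

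The second gap is the closing of the loop under the FL hypothesis. After identifying $p_L\mathcal{R}_Sp_L=C_u^*(\Lambda_L)$ via Theorem~\ref{thm_mtrspc_lclasses} (as the paper does), you need ``exactness of the uniform Roe algebra of a bounded-geometry space implies property A''. This is \emph{not} the content of \cite[Theorem~5.5.7]{BO08}: that circle of results gives property A $\Leftrightarrow$ nuclearity of $C_u^*(X)$, and the passage from mere exactness of $C_u^*(X)$ back to property A for a general bounded-geometry space is a recent and substantially harder theorem of Sako, which is exactly what the paper invokes (\cite[Theorem~1.1]{S19}). Note that $\Lambda_L$ is an essentially arbitrary bounded-geometry graph (cf.\ Proposition~\ref{prop:lclasses}), so the group-averaging trick that makes the exactness statement classical for Cayley graphs is unavailable. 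With only the nuclearity characterization your argument would close the loop from (\ref{thm_l_nuclear}) but not from (\ref{thm_l_r_exact}) or (\ref{thm_l_exact}), so the full equivalence of all four conditions under FL is not established as written; citing Sako's theorem repairs this step.
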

    \begin{proof}
      For convenience, in this proof we denote by $p$ the projection $p_L$, and $\mathcal{R}$ stands for $\mathcal{R}_S$. Furthermore, note that $p V_s p = V_s p$, since $p$ projects onto an $\mathcal{L}$-class (see Lemma~\ref{lemma_l_classes}).

      (\ref{thm_l_a}) $\Rightarrow$ (\ref{thm_l_nuclear}). Let $\mathcal{F} \subset p \mathcal{R} p$ be finite and $\varepsilon > 0$. Observe that, without loss of generality, we may suppose that every element of $\mathcal{F}$ is of the form $f V_s$ where the support of $f$ is contained in $L \cap D_{ss^*}$. Let $R > 0$ be larger than the length of $s$ for every $s$ such that $f V_s \in \mathcal{F}$. Since $\Lambda_L$ has property A there are $C > 0$ and $\xi \colon \Lambda_L \rightarrow \ell^1(\Lambda_L)_{1, +}$ such that $\mathrm{supp}(\xi_x)\subset B_C(x)$ for every $x\in L$ and
      \begin{equation}\label{eq_l_a_eps}
        \left|\left| \, \xi_x - \xi_y \, \right|\right|_1 \leq \frac{\varepsilon}{M} \;\; \text{for every} \; x, y \in L \;\; \text{such that} \; d\left(x, y\right) \leq R,
      \end{equation}
      where $M := \max\left\{ \, \left|\left|f V_s\right|\right| \mid f V_s \in \mathcal{F} \, \right\}$. Consider then the map
      \begin{equation}
        \varphi \colon p \mathcal{R} p \rightarrow \prod_{x \in L} M_{B_C\left(x\right)}, \;\; \text{where} \; \varphi\left(a\right) := \left(p_{B_C\left(x\right)} a p_{B_C\left(x\right)}\right)_{x \in L}. \nonumber
      \end{equation}
      Recall that $M_{B_C(x)}$ denotes the full matrix algebra with rows and columns labeled by elements in $B_C(x)$ and $p_{B_C(x)} \in \ell^{\infty}(S)$ is the characteristic function of $B_C(x)$. Since $\Lambda_L$ is of bounded geometry there is some $q > 0$ such that $|B_R(x)| \leq q$ for every $x \in L$ and, therefore,
      \begin{equation}
        \text{im}(\varphi) \subset \prod_{x \in L} M_{B_C(x)} \subset \ell^{\infty}(L) \otimes M_q. \nonumber
      \end{equation}
      Let $\mathcal{A}$ be the closure in norm of the image of $\varphi$ and let
      \begin{equation}
        \psi \colon \mathcal{A} \rightarrow p \mathcal{R} p, \;\; \text{where} \; \psi\left(\left(b_x\right)_{x \in L}\right) := \sum_{x \in L} \xi_x^* b_x \xi_x
      \end{equation}
      where we identify $\xi_x$ with the diagonal operator $\xi_x \delta_y := \xi_y(x) \delta_y$ (note the flip between the argument and the index of $\xi$). The maps $\varphi$ and $\psi$ are c.c.p. and it follows that
      \begin{equation}
        \psi\left(\varphi\left(f V_s\right)\right) = \sum_{x \in L} f \xi_x V_s \xi_x = f V_s \left(\sum_{x \in L} \left(s^* \xi_x\right) \, \xi_x\right) \in p \mathcal{R} p, \nonumber
      \end{equation}
      where $s^* \xi_x (\delta_y) := \xi_{sy}(x)$ if $y \in D_{s^*s}$ and $s^* \xi_x (\delta_y) = 0$ otherwise. From Eq.~(\ref{eq_l_a_eps}) we have for all $s\in S$ such that 
      $f V_s \in \mathcal{F}$ the following estimate
      \begin{align}
        \left|\left| V_{s^*s} - \sum_{x \in L} \left(s^*\xi_x\right) \xi_x \right|\right| \, & = \, \sup_{y \in L} \left| V_{s^*s} \delta_y - \sum_{x \in L} \left(\left(s^*\xi_x\right) \xi_x\right) \delta_y \right| \nonumber \\
        & = \, \sup_{y \in L \cap D_{s^*s}} \left|1 - \sum_{x \in L} \xi_y\left(x\right) \xi_{sy}\left(x\right)\right| \, = \, \sup_{y \in L \cap D_{s^*s}} \left|1 - \langle \xi_{sy}, \xi_y\rangle \right| \leq \frac{\varepsilon}{M} \;.\nonumber
      \end{align}
      Therefore, we can estimate
      \begin{equation}
        \left|\left| \, f V_s - \psi\left(\varphi\left(f V_s\right)\right) \, \right|\right| = \left|\left| \, f V_s \left(V_{s^*s} - \sum_{x \in L} \left(s^*\xi_x\right) \, \xi_x\right) \, \right|\right| \, \leq \, \left|\left|f V_s\right|\right| \, \left|\left|V_{s^*s} - \sum_{x \in L} \left(s^*\xi_x\right) \xi_x\right|\right| \leq \varepsilon. \nonumber
      \end{equation}
      Since, by Proposition~\ref{prop_subalg_nuclear}, $\mathcal{A}$ is nuclear it follows that $p \mathcal{R} p$ is nuclear as well by Proposition~\ref{prop_nuclear_factor}.

      (\ref{thm_l_nuclear}) $\Rightarrow$ (\ref{thm_l_r_exact}) follows since nuclear algebras are exact.

      (\ref{thm_l_r_exact}) $\Rightarrow$ (\ref{thm_l_exact}) follows since exactness passes to subalgebras (see~\cite[Exercise~2.3.2]{BO08}).

      (\ref{thm_l_exact}) $\Rightarrow$ (\ref{thm_l_r_exact}). Given $\varepsilon > 0$ and a finite $\mathcal{F} \subset p \mathcal{R} p$, without loss of generality we may suppose again that every element in $\mathcal{F}$ is of the form $f V_s$, where the support of $f$ is contained in $L \cap D_{ss^*}$. By the exactness of $p C_r^*(S) p$ there are c.c.p. maps
      \begin{equation}
        \tilde{\varphi} \colon p C_r^*(S) p \rightarrow M_n \;\; \text{and} \;\; \tilde{\psi} \colon M_n \rightarrow \mathcal{B}\left(\ell^2\left(L\right)\right)\nonumber
      \end{equation}
      such that $||V_s p - \tilde{\psi}(\tilde{\varphi}(V_s p))|| \leq \varepsilon/M$ for all $f V_s \in \mathcal{F}$, where $M := \max\left\{ \, \left|\left|f V_s\right|\right| \mid f V_s \in \mathcal{F} \, \right\}$. Consider the c.c.p. maps
      \begin{align}
        \varphi \colon p \mathcal{R} p \rightarrow \ell^{\infty} \left(L\right) \otimes M_n & \quadtext{and} \psi \colon \ell^{\infty}\left(L\right) \otimes M_n \rightarrow \mathcal{B}\left(\ell^2\left(L\right)\right), \nonumber \\
        f V_s \; \rightarrow \; f \otimes \tilde{\varphi}\left(V_s\right) & \quadtext{and}  f \otimes b \rightarrow f \tilde{\psi}\left(b\right). \nonumber
      \end{align}
      Then
      \begin{align}
        \left|\left| \, \psi\left(\varphi\left(f V_s\right)\right) - f V_s \, \right|\right| \leq \left|\left|f V_s\right|\right| \, \cdot \, \left|\left| V_s p - \tilde{\psi}\left(\tilde{\varphi}\left(V_s p\right)\right) \right|\right| \leq \varepsilon, \nonumber
      \end{align}
      which proves, again using that $\ell^{\infty}(L) \otimes M_n$ is nuclear (see Proposition~\ref{prop_subalg_nuclear}), that $p \mathcal{R} p$ is exact. 

      Lastly, if the pair $(L, K)$ admits a finite labeling then by Theorem~\ref{thm_mtrspc_lclasses} the corner $p \mathcal{R} p$ is a uniform Roe algebra, i.e., $p \mathcal{R} p \cong C_u^*(\Lambda_L)$. Using recent results by Sako (see~\cite[Theorem~1.1]{S19}) we have that $C_u^*(\Lambda_L)$ is exact $\Leftrightarrow$ $C_u^*(\Lambda_L)$  is nuclear $\Leftrightarrow$  $\Lambda_L$ has property A.
    \end{proof}

    \begin{remark}
      Observe the implication (\ref{thm_l_a}) $\Rightarrow$ (\ref{thm_l_nuclear}) in Theorem~\ref{invsem_thm_l_a} is independent of $(L, K)$ admitting a finite labeling or not. This is remarkable for, in general, nuclearity does not pass to subalgebras and the corner $p_L \mathcal{R}_S p_L$ is properly contained in $C_u^*(\Lambda_L)$ if $(L, K)$ is not FL. In general, it is known that a metric space (say $\Lambda_L$) has property A if and only if its uniform Roe algebra (say $C_u^*(\Lambda_L)$) is nuclear (see, for example,~\cite[Theorem~5.5.7]{BO08}). Moreover,
      in Theorem~\ref{invsem_thm_l_a}~(\ref{thm_l_exact}) the corner $p_L C_r^*(S) p_L$ needs not be closed. Indeed, observe that, in general, $p_L$ is not contained in $C_r^*(S)$.
    \end{remark}
    Some examples of $\mathcal{L}$-classes with property A are every finite graph or every graph quasi-isometric to a tree (cf., \cite[Example~4.1.5]{NY12}). Nevertheless, contrary to the group case, an amenable inverse semigroup can have $\mathcal{L}$-classes without property A. Indeed, let $G$ be a group without property A (see, for example,~\cite{W08,O14}) and adjoin to it a zero element. Then $S := G \sqcup \{0\}$ is an amenable inverse semigroup where $G$ forms an $\mathcal{L}$-class without property A. We, however, would like to point out the following question, which is, in our opinion, more intriguing.
    \begin{enumerate}
      \item[\textbf{Q:}] Is there any class of inverse semigroups such that every/any of their Sch\"{u}tzenberger graphs are non-property A?
    \end{enumerate}

    We now extend Theorem~\ref{invsem_thm_l_a} to the whole graph $\Lambda_S=\sqcup_{e\in E(S)}\Lambda_{L_e}$. In particular, this allows us to characterize when the graph $\Lambda_S$ has property A via the C*-algebras $\mathcal{R}_S$ and $C_r^*(S)$. For similar results relating the amenability of the maximal homomorphic image $G(S)$ and the \textit{weak containment} of $S$ see~\cite[Proposition~4.1]{P78} and~\cite{M10}. For the relation between the nuclearity of $\mathcal{R}_S$ and the exactness or the nuclearity of $C_r^*(S)$ see~\cite{A16,A19}. We first note that $\Lambda_S$ has property A when all of its connected components have property A uniformly (see Definition~\ref{def_a}). 
    \begin{lemma}\label{lemma_unif_a}
      $\Lambda_S$ has property A if and only if the family $\{\Lambda_{L_e}\}_{e \in E(S)}$ has uniform property A.
    \end{lemma}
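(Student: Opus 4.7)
The plan is to exploit the fact that distinct components $\Lambda_{L_e}$ and $\Lambda_{L_{e'}}$ of $\Lambda_S$ are pairwise at infinite distance, so the property A condition ``$\|\xi_x - \xi_y\|_1 \le \varepsilon$ whenever $d(x,y) \le R$'' only constrains pairs of points inside a single component. Consequently, property A of the disjoint union essentially decouples into property A of each component, with the only coupling being the uniformity of the radius~$C$. Both implications will be short and conceptual; the main thing to check is that the quantifiers line up correctly.

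First I would address the forward direction. Assume $\Lambda_S$ has property A and fix $\varepsilon>0$ and $R>0$. This gives a constant $C>0$ and a map $\xi\colon \Lambda_S\to \ell^1(\Lambda_S)_{1,+}$ with $\mathrm{supp}(\xi_x)\subset B_C(x)$ and $\|\xi_x-\xi_y\|_1\le\varepsilon$ whenever $d(x,y)\le R$. Since components are infinitely far apart, $B_C(x)\subset \Lambda_{L_e}$ whenever $x\in L_e$, so $\xi_x\in\ell^1(\Lambda_{L_e})$. Thus the restriction $\xi^e:=\xi|_{\Lambda_{L_e}}$ witnesses property~A for $\Lambda_{L_e}$ with the \emph{same} constant $C$, independent of $e\in E(S)$. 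Hence the family $\{\Lambda_{L_e}\}_{e\in E(S)}$ has uniform property A.

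For the reverse direction, suppose $\{\Lambda_{L_e}\}_{e\in E(S)}$ has uniform property A. Given $\varepsilon>0$ and $R>0$, choose for each $e\in E(S)$ a witness $\xi^e\colon \Lambda_{L_e}\to \ell^1(\Lambda_{L_e})_{1,+}$ with $\mathrm{supp}(\xi^e_x)\subset B_{C_e}(x)$ and $\|\xi^e_x-\xi^e_y\|_1\le\varepsilon$ whenever $d(x,y)\le R$; by the uniform property A hypothesis we can take $C:=\sup_{e\in E(S)} C_e<\infty$. Define $\xi\colon \Lambda_S\to \ell^1(\Lambda_S)_{1,+}$ by $\xi_x := \xi^e_x$ when $x\in L_e$, where we view $\ell^1(\Lambda_{L_e})\subset \ell^1(\Lambda_S)$ via extension by zero. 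The support bound $\mathrm{supp}(\xi_x)\subset B_C(x)$ holds by construction. If $x,y\in \Lambda_S$ satisfy $d(x,y)\le R<\infty$, they must lie in the same component $\Lambda_{L_e}$, so $\|\xi_x-\xi_y\|_1 = \|\xi^e_x-\xi^e_y\|_1\le\varepsilon$, which verifies property A for $\Lambda_S$.

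There is no serious obstacle; the only point one must be careful about is that in the reverse direction the constants $C_e$ from the various components must be uniformly bounded, which is precisely the content of uniform property A as defined in Definition~\ref{def_a}~(2). Without this uniformity one would not obtain a single $C$ valid for all of $\Lambda_S$.
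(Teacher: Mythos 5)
Your argument is correct and is exactly the one the paper intends: the paper simply remarks that the lemma is a direct consequence of the definitions, and your proof spells out those details (components at infinite distance decouple the property A condition, with uniform property A supplying the single constant $C$ in the reverse direction). No further comment is needed.
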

    \begin{proof}
     The proof is a direct consequence of the definitions involved.
    \end{proof}

    Observe that there are inverse semigroups, necessarily with infinitely many $\mathcal{L}$-classes, without property A and such that every of their Sch\"{u}tzenberger graphs do have property A (see Example~\ref{ex_boxspc_a}). We generalize next Theorem~\ref{invsem_thm_l_a} to $\Lambda_S$.
    \begin{theorem}\label{invsem_thm_a}
      Let $S = \langle K \rangle$ be a countable and discrete inverse semigroup of bounded geometry. Consider the assertions:
      \begin{enumerate}
        \item \label{thm_exact_a} The graph $\Lambda_S$ has property A.
        \item \label{thm_exact_nuclear} The C*-algebra $\mathcal{R}_S$ is nuclear.
        \item \label{thm_exact_r_exact} The C*-algebra $\mathcal{R}_S$ is exact.
        \item \label{thm_exact_exact} The C*-algebra $C^*_r(S)$ is exact.
      \end{enumerate}
      Then (\ref{thm_exact_a}) $\Rightarrow$ (\ref{thm_exact_nuclear}) $\Rightarrow$ (\ref{thm_exact_r_exact}) $\Leftrightarrow$ (\ref{thm_exact_exact}). Moreover, if the pair $(S, K)$ admits a finite labeling then all conditions are equivalent.
    \end{theorem}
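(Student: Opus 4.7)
The plan is to mirror the proof of Theorem~\ref{invsem_thm_l_a} but work globally on $\Lambda_S=\sqcup_{e\in E(S)}\Lambda_{L_e}$, using Lemma~\ref{lemma_unif_a} in the crucial place to promote property A of the components to a \emph{uniform} version.

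For the implication (\ref{thm_exact_a}) $\Rightarrow$ (\ref{thm_exact_nuclear}) I would fix a finite $\mathcal{F}\subset\mathcal{R}_S$ (WLOG consisting of elements of the form $fV_s$) and $\varepsilon>0$, set $R>0$ larger than $\ell(s)$ for all such $s$, and invoke Lemma~\ref{lemma_unif_a} to obtain a \emph{global} constant $C>0$ together with a family of maps $\xi^{(e)}\colon \Lambda_{L_e}\to\ell^1(\Lambda_{L_e})_{1,+}$ satisfying the property A condition with the same $C$ on every component. Assembling these into a single $\xi\colon S\to \ell^1(S)_{1,+}$ with $\mathrm{supp}(\xi_x)\subset B_C(x)$, and using that $\Lambda_S$ has bounded geometry to fix $q$ with $|B_C(x)|\leq q$, one defines
\begin{equation}
\varphi\colon \mathcal{R}_S\to \ell^{\infty}(S)\otimes M_q,\quad \varphi(a):=\bigl(p_{B_C(x)}\,a\,p_{B_C(x)}\bigr)_{x\in S}, \nonumber
\end{equation}
and
\begin{equation}
\psi\colon \overline{\mathrm{im}(\varphi)}\to \mathcal{R}_S,\quad \psi\bigl((b_x)_{x\in S}\bigr):=\sum_{x\in S}\xi_x^*\,b_x\,\xi_x, \nonumber
\end{equation}
both c.c.p. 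The estimate $\|\psi(\varphi(fV_s))-fV_s\|\leq \varepsilon$ is then obtained exactly as in the $\mathcal{L}$-class case, since the sum $\sum_x (s^*\xi_x)\xi_x$ decouples across the $\mathcal{L}$-classes. Proposition~\ref{prop_subalg_nuclear} shows the codomain is nuclear, and Proposition~\ref{prop_nuclear_factor} concludes.

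The implications (\ref{thm_exact_nuclear}) $\Rightarrow$ (\ref{thm_exact_r_exact}) and (\ref{thm_exact_r_exact}) $\Rightarrow$ (\ref{thm_exact_exact}) are immediate: nuclearity implies exactness, and exactness passes to the subalgebra $C^*_r(S)\subset\mathcal{R}_S$. For (\ref{thm_exact_exact}) $\Rightarrow$ (\ref{thm_exact_r_exact}) I would copy the argument for (\ref{thm_l_exact}) $\Rightarrow$ (\ref{thm_l_r_exact}) in Theorem~\ref{invsem_thm_l_a} verbatim: given $\mathcal{F}\subset\mathcal{R}_S$ reduced to elements $fV_s$ and factorizations $\tilde\varphi\colon C^*_r(S)\to M_n$, $\tilde\psi\colon M_n\to \mathcal{B}(\ell^2(S))$ coming from the exactness of $C^*_r(S)$, the pair
\begin{equation}
fV_s\longmapsto f\otimes\tilde\varphi(V_s),\qquad f\otimes b\longmapsto f\,\tilde\psi(b) \nonumber
\end{equation}
factors through the nuclear algebra $\ell^{\infty}(S)\otimes M_n$ (Proposition~\ref{prop_subalg_nuclear}) and approximates $\mathcal{F}$ up to $\varepsilon$, so $\mathcal{R}_S$ is exact.

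Finally, under the FL hypothesis Theorem~\ref{thm_mtrspc} gives $\mathcal{R}_S\cong C^*_u(\Lambda_S)$. Sako's theorem~\cite[Theorem~1.1]{S19}, applied to the bounded geometry extended metric space $\Lambda_S$, states that $C^*_u(\Lambda_S)$ being nuclear, being exact, and $\Lambda_S$ having property A are equivalent, which closes the cycle. The main obstacle is the first implication: individual $\mathcal{L}$-classes having property A is strictly weaker than $\Lambda_S$ having property A (cf.\ Example~\ref{ex_boxspc_a}), so one cannot work component by component and then paste; the uniform radius $C$ supplied by Lemma~\ref{lemma_unif_a} is exactly what makes the global $\varphi$ land in $\ell^{\infty}(S)\otimes M_q$ for a \emph{single} fixed $q$.
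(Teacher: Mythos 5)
Your proposal is correct and follows essentially the same route as the paper: the paper also reduces (\ref{thm_exact_a}) to uniform property A of the components via Lemma~\ref{lemma_unif_a}, reruns the c.c.p.\ factorization argument of Theorem~\ref{invsem_thm_l_a} globally with the uniformly bounded constants $C_e$, and in the FL case identifies $\mathcal{R}_S$ with $C^*_u(\Lambda_S)$ (Theorem~\ref{thm_mtrspc}) and invokes Sako's theorem to close the cycle. Your write-up is in fact more explicit than the paper's sketch, but there is no substantive difference in approach.
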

    \begin{proof}
      By Lemma~\ref{lemma_unif_a}, condition (\ref{thm_exact_a}) is equivalent to the family $\{\Lambda_{L_e}\}_{e \in E(S)}$ having uniform property A. Observe that all the implications follow from similar arguments as those in the proof of Theorem~\ref{invsem_thm_l_a}. For instance, in this context the c.c.p. maps are given for each $\mathcal{L}$-class $L_e$, noting that the constants $C_e$, where $e\in E$, are uniformly bounded (see Definition~\ref{def_a}).
    \end{proof}

    From Theorem~\ref{invsem_thm_a} one arrives at the following immediate corollary. Recall that, given an inverse semigroup $S$, one may construct its \textit{universal groupoid} $G_\mathcal{U}(S)$, whose unit space is a locally compact totally disconnected Hausdorff space (see~\cite{P12,E08}). For the notion of amenability of groupoids we refer to
    \cite{AR00} (see also \cite{ES17}).
    \begin{corollary} \label{cor_ame_grpd}
      Let $S = \langle K \rangle$ be a countable and discrete inverse semigroup of bounded geometry. Furthermore suppose that $(S, K)$ admits a finite labeling. If Paterson's universal groupoid $G_\mathcal{U}(S)$ is amenable (as a groupoid) then $\Lambda_S$ has property A.
    \end{corollary}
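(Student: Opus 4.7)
The plan is to reduce the statement to Theorem~\ref{invsem_thm_a}, which under the finite labeling hypothesis asserts that $\Lambda_S$ has property A if and only if the reduced semigroup C*-algebra $C^*_r(S)$ is exact. Thus it suffices to produce exactness of $C^*_r(S)$ out of the amenability of $G_{\mathcal{U}}(S)$.

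The first step is to invoke Paterson's identification $C^*_r(S) \cong C^*_r(G_{\mathcal{U}}(S))$ (see \cite{P12,E08}), which is the key technical bridge between the semigroup and groupoid pictures and is precisely the reason for introducing the universal groupoid in this context. This rewrites the question as a statement about the reduced C*-algebra of an étale groupoid whose unit space is locally compact, Hausdorff and totally disconnected.

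The second step is then standard groupoid theory: topological amenability of an étale groupoid implies that its reduced C*-algebra is nuclear (see \cite{AR00}). Since nuclearity always implies exactness, this yields exactness of $C^*_r(G_{\mathcal{U}}(S))$, hence of $C^*_r(S)$. An application of Theorem~\ref{invsem_thm_a}, using the FL hypothesis on $(S,K)$ to close the equivalence between conditions (\ref{thm_exact_a}) and (\ref{thm_exact_exact}) there, then yields that $\Lambda_S$ has property A.

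The main obstacle is essentially bookkeeping rather than conceptual: one has to verify that the version of Paterson's universal groupoid under consideration — which may fail to be Hausdorff even when its unit space is — and the precise formulation of groupoid amenability taken as hypothesis are compatible with the setting in which the implication amenability $\Rightarrow$ nuclearity of the reduced C*-algebra has been established. Modulo these standard but delicate compatibility checks, the corollary is a direct chaining of Paterson's isomorphism, the Anantharaman-Delaroche–Renault nuclearity theorem, and Theorem~\ref{invsem_thm_a}.
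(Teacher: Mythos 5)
Your argument is exactly the paper's: amenability of $G_{\mathcal{U}}(S)$ gives nuclearity (hence exactness) of its reduced C*-algebra, Paterson's identification transfers this to $C^*_r(S)$, and Theorem~\ref{invsem_thm_a} together with the FL and bounded geometry hypotheses yields property A of $\Lambda_S$. The proposal is correct and matches the paper's proof, with your remark on Hausdorffness being a reasonable extra compatibility check that the paper leaves implicit.
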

    \begin{proof}
      Recall that Paterson's universal groupoid is amenable then its reduced algebra is nuclear. In turn, the reduced C*-algebra of $S$ and that of its universal groupoid coincide. Hence, by Theorem~\ref{invsem_thm_a}, the graph $\Lambda_S$ has property A.
    \end{proof}
    \begin{remark}
      Corollary~\ref{cor_ame_grpd} can be proven in an alternative way. Indeed, recall that the unit space $G_\mathcal{U}(S)^{(0)}$ of the universal groupoid of $S$ is the space of \textit{filters} of $E(S)$. The amenability condition of $G_\mathcal{U}(S)$ imposes a certain invariance condition on the sections of each filter and, as can be seen, the property A of $\Lambda_S$ is also an invariance condition on certain filters in $G_\mathcal{U}(S)^{(0)}$. In this way, the amenability of $G_\mathcal{U}(S)$ directly implies the property A of $S$, without needing to resort to C*-arguments.
    \end{remark}
    \begin{remark}
      Note that the reverse implication of Corollary~\ref{cor_ame_grpd} is false. Indeed, any non-abelian free group has property A, but is not amenable.
    \end{remark}

    The rest of the section aims to give a relation between the property A of the graph $\Lambda_S$ and that of $G(S)$ (see Proposition~\ref{prop_sa_gsa}). Before getting to the proof we need the next simple lemma (see~\cite[Lemma~4.26]{ALM19}).
    \begin{lemma}\label{invsem_lemma_projdec}
      Every countable inverse semigroup $S$ has a decreasing sequence of projections $\{e_n\}_{n \in \mathbb{N}}$ that is eventually below every other projection, that is, $e_n \geq e_{n + 1}$ and for every $f \in E(S)$ there is some $n_0 \in \mathbb{N}$ such that $f \geq e_{n_0}$.
    \end{lemma}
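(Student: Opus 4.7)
The plan is to exploit the fact, recalled early in the paper, that $E(S)$ is a commutative inverse subsemigroup of $S$, hence a meet-semilattice under the natural order: for any two projections $e,f \in E(S)$ the product $ef = fe$ is again a projection satisfying $ef \leq e$ and $ef \leq f$ (just take the idempotent witness $f$ in the definition of $\leq$, since $e \cdot f = ef$ and $f^2 = f$). This is the only structural ingredient we need.

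Since $S$ is assumed countable, $E(S) \subseteq S$ is countable as well, so we may fix an enumeration $E(S) = \{f_1, f_2, f_3, \dots\}$ (the argument also works trivially if $E(S)$ is finite by padding with the last element). Define
\begin{equation*}
  e_n := f_1 f_2 \cdots f_n \in E(S), \qquad n \in \mathbb{N}.
\end{equation*}
Commutativity of $E(S)$ guarantees that each $e_n$ is indeed an idempotent, and the recursion $e_{n+1} = e_n f_{n+1}$ together with the observation above gives $e_{n+1} \leq e_n$, so $\{e_n\}_{n \in \mathbb{N}}$ is a decreasing sequence of projections.

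To verify the cofinality condition, pick any $f \in E(S)$. By construction there is some index $n_0 \in \mathbb{N}$ with $f = f_{n_0}$, and then
\begin{equation*}
  e_{n_0} = f_1 \cdots f_{n_0-1} \cdot f_{n_0} \leq f_{n_0} = f,
\end{equation*}
which is exactly $f \geq e_{n_0}$, as required. I do not anticipate any real obstacle: the whole argument is just the standard construction of a cofinal decreasing sequence in a countable meet-semilattice, and the only subtlety (that $ef$ is a projection below both $e$ and $f$) follows directly from the commutativity of $E(S)$ already invoked elsewhere in the paper.
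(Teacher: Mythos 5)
Your proposal is correct and coincides with the paper's own argument: the paper also enumerates $E(S)=\{f_1,f_2,\dots\}$ and sets $e_n:=f_1\cdots f_n$, leaving the verification (which you spell out via commutativity of $E(S)$) implicit. No issues.
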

    \begin{proof}
      Since $S$ is countable we can enumerate the set of projections $E(S) = \left\{f_1, f_2, \dots\right\}$. The lemma follows putting $e_n := f_1 \dots f_n$.
    \end{proof}

    The proof of the following proposition is based on the facts that the left Cayley graph of $G(S)$ is the inductive limit of the Sch\"{u}tzenberger graphs of $S$ and that property A is closed under inductive limits with injective connecting maps. We give an explicit proof because, in general, the natural connecting maps $L_e \rightarrow L_{ef}$, where $xe \mapsto xef$, need not be injective for certain $\mathcal{L}$-classes.
    \begin{proposition}\label{prop_sa_gsa}
      Let $S$ be a countable and discrete inverse semigroup such that $\Lambda_S$ is of bounded geometry. If $\Lambda_S$ has property A, then so does the maximal homomorphic image $G(S)$.
    \end{proposition}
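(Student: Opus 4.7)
The plan is to construct, for each pair $(R,\varepsilon)$, a property A function on $G(S)$ by first building approximations on exhausting finite subsets of $G(S)$ via lifts into the Sch\"{u}tzenberger graphs $\Lambda_{L_{e_n}}$, and then extracting a pointwise limit by a diagonal compactness argument. First, using Lemma~\ref{invsem_lemma_projdec}, fix a decreasing sequence $\{e_n\}_{n\in\mathbb{N}}\subset E(S)$ that is eventually below every projection. Using property A of $\Lambda_S$ together with its uniform character (Lemma~\ref{lemma_unif_a}), produce a constant $C>0$ and a single $\xi\colon S\to\ell^1(S)_{1,+}$ with $\mathrm{supp}(\xi_x)\subseteq B_C(x)$ within the $\mathcal{L}$-class of $x$, and $\|\xi_x-\xi_y\|_1\leq\varepsilon$ whenever $x\mathcal{L}y$ and $d_S(x,y)\leq R$.

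Next, enumerate $G(S)=\{g_1,g_2,\dots\}$ and set $F_k:=\bigcup_{i\leq k}B_{R+C}(g_i)\subset G(S)$; each $F_k$ is finite by Proposition~\ref{prop_bddgeom}. Following verbatim the argument in the proof of Proposition~\ref{prop_bddgeom}, for $n_k$ sufficiently large there is a map $\phi_k\colon F_k\to L_{e_{n_k}}$ with $\sigma\circ\phi_k=\mathrm{id}_{F_k}$ that is an isometry on $F_k$ for distances $\leq R+C$. Concretely, picking lifts $s_i\in S$ with $\sigma(s_i)=g_i$, the assignment $g_i\mapsto s_i\,e_{n_k}$ is, for $n_k$ large enough, independent of $s_i$ (since $\sigma(s)=\sigma(t)$ implies $sf=tf$ for some projection $f$ with $e_{n_k}\leq f$ eventually), lands in $L_{e_{n_k}}$, and is distance-preserving (using Lemma~\ref{lemma_dist_lclasses}~(\ref{dist_lclasses_1}) for the upper bound and the path-lifting argument of Proposition~\ref{prop_bddgeom} for the lower bound). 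For each $g\in F_k$ define
\[
  \eta_g^{(k)}(h):=\xi_{\phi_k(g)}(\phi_k(h)) \quad \text{for } h\in B_C(g),
\]
and $\eta_g^{(k)}(h):=0$ otherwise (including for $g\notin F_k$). Then $\eta_g^{(k)}\geq 0$, $\|\eta_g^{(k)}\|_1=1$, $\mathrm{supp}(\eta_g^{(k)})\subseteq B_C(g)$, and for $g,g'\in F_k$ with $d_{G(S)}(g,g')\leq R$ the isometry property of $\phi_k$ combined with the property A of $\xi$ yields
\[
  \|\eta_g^{(k)}-\eta_{g'}^{(k)}\|_1 = \|\xi_{\phi_k(g)}-\xi_{\phi_k(g')}\|_1 \leq \varepsilon.
\]

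Finally, since $G(S)$ has bounded geometry, each $\ell^1(B_C(g))$ is finite-dimensional, so the sequences $\{\eta_g^{(k)}\}_{k\geq k(g)}$ are bounded in finite-dimensional spaces. A standard diagonal extraction over the enumeration of $G(S)$ produces a subsequence $\{k_j\}$ such that $\eta_g:=\lim_j\eta_g^{(k_j)}$ exists in $\ell^1(G(S))$ for every $g\in G(S)$. The limit $\eta$ inherits positivity, $\ell^1$-normalization, support in $B_C(g)$, and the inequality $\|\eta_g-\eta_{g'}\|_1\leq\varepsilon$ for $d(g,g')\leq R$. Performing this construction for each pair $(R,\varepsilon)$ shows property A of $G(S)$. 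The main technical obstacle is the construction of the isometric lifts $\phi_k$: one must pick $n_k$ so that $e_{n_k}$ lies below every projection of the form $(k_d\cdots k_1)^*(k_d\cdots k_1)$ coming from geodesics of length $\leq R+C$ in $F_k$, which is the precise reason the decreasing sequence from Lemma~\ref{invsem_lemma_projdec} is required and why one cannot simply appeal to a standard inductive limit argument (cf.\ the remark before the proposition about non-injective connecting maps).
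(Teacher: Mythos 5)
Your argument is essentially the paper's own proof: the paper likewise fixes the decreasing sequence $\{e_n\}$ from Lemma~\ref{invsem_lemma_projdec}, transfers the uniformly-property-A functions on the graphs $\Lambda_{L_{e_n}}$ (Lemma~\ref{lemma_unif_a}) to $G(S)$ via the assignment $\sigma(s)\mapsto se_n$ -- which is exactly what your lifts $\phi_k$ encode -- and controls supports and the variation using Lemma~\ref{lemma_dist_lclasses} together with the path-lifting idea from Proposition~\ref{prop_bddgeom}. The only real difference is that the paper takes a limit along a free ultrafilter where you use a diagonal extraction over the exhaustion $F_k$, a cosmetic change; just note that for the variation estimate only the inequality $\|\eta^{(k)}_g-\eta^{(k)}_{g'}\|_1\le\|\xi_{\phi_k(g)}-\xi_{\phi_k(g')}\|_1$ is needed (and is what injectivity of $\phi_k$ actually gives), while your normalization claim $\|\eta^{(k)}_g\|_1=1$ rests on the same support-capture observation that underlies the paper's computation of $\|\xi_{\sigma(s)}\|_1$.
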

    \begin{proof}
      Fix a free ultrafilter $\omega \in \beta\mathbb{N} \setminus \mathbb{N}$ and let $\{e_n\}_{n \in \mathbb{N}} \subset E(S)$ be a decreasing sequence of projections as in Lemma~\ref{invsem_lemma_projdec}. In particular, observe that for all $s \in S$ we have that $s e_n \mathcal{L} e_n$ for $n$ large enough. Let $L_n \subset S$ be the $\mathcal{L}$-class of $e_n \in E(S)$. By Lemma~\ref{lemma_unif_a} the family $\{\Lambda_{L_n}\}_{n \in \mathbb{N}}$ has uniform property A and, thus, given $\varepsilon > 0$ and $R > 0$, let $\xi^{(n)} \colon \Lambda_{L_n} \rightarrow \ell^1(\Lambda_{L_n})_{1,+}$ witness the $(R, \varepsilon)$-uniform property A of 
      the family $\{\Lambda_{L_n}\}_{n \in \mathbb{N}}$. Consider
      \begin{equation}
        \xi \colon G\left(S\right) \rightarrow \ell^1\left(G\left(S\right)\right), \quad \text{where} \;\; \xi_{\sigma\left(s\right)}\left(\sigma\left(t\right)\right) := \lim_{n \rightarrow \omega} \xi^{\left(n\right)}_{se_n}\left(te_n\right). \nonumber
      \end{equation}
      Note that $\xi_{\sigma(s)}$ is a well defined positive function of norm $1$, since
      \begin{equation}
        \left|\left| \xi_{\sigma\left(s\right)} \right|\right|_1 = \sum_{\sigma\left(t\right) \in G\left(S\right)} \lim_{n \rightarrow \omega} \xi^{\left(n\right)}_{se_n}\left(te_n\right) = \lim_{n \rightarrow \omega} \sum_{t \in B_C\left(se_n\right)} \xi_{se_n}^{\left(n\right)}\left(te_n\right) = \lim_{n \rightarrow \omega} \left|\left| \xi^{\left(n\right)}_{se_n} \right|\right|_1 = 1. \nonumber
      \end{equation}
      In addition, for any $\sigma(s), \sigma(t) \in G(S)$ such that $\xi_{\sigma(s)}(\sigma(t)) \neq 0$, by the limit construction it follows that there is some positive $\delta > 0$ with $\xi_{se_n}^{(n)}(te_n) \geq \delta$ for all sufficiently large $n \in \mathbb{N}$. Therefore, using the comparison of distances given in Lemma~\ref{lemma_dist_lclasses}
      we obtain
      \begin{equation}
        d_{G\left(S\right)}\left(\sigma\left(s\right), \sigma\left(t\right)\right) \leq d_{L_n}\left(se_n, te_n\right) \leq C < \infty \nonumber
      \end{equation}
      where $C > 0$ is a constant bounding the diameter of the supports of $\xi_{se_n}^{(n)}$ (see Definition~\ref{def_a}). It thus follows that $\text{supp}(\xi_{\sigma(s)})$ is contained in a ball of radius $C > 0$ around $\sigma(s)$. Finally, let $\sigma(s), \sigma(t) \in G(S)$ such that $d_{G(S)}(\sigma(s), \sigma(t)) \leq R$. Then
      \begin{equation}
        \left|\left| \xi_{\sigma\left(s\right)} - \xi_{\sigma\left(t\right)} \right|\right|_1 = \lim_{n \rightarrow \omega} \left|\left| \xi_{se_n} - \xi_{te_n} \right|\right|_1 \leq \varepsilon \nonumber
      \end{equation}
      since $||\xi_{se_n} - \xi_{te_n}||_1 \leq \varepsilon$ for all sufficiently large $n \in \mathbb{N}$.
    \end{proof}

    \subsection{Property A in the E-unitary case}\label{sec_euni}
    We conclude the article applying Theorem~\ref{invsem_thm_a} to the special case where $S$ is \textit{E-unitary}. An inverse semigroup is \textit{E-unitary} if the relation generated by $\sigma$ and $\mathcal{L}$ is the equality, i.e., if $s \mathcal{L} t$ and $\sigma(s) = \sigma(t)$ then $s = t$ for every $s, t \in S$. Recall, for instance, that all F-inverse semigroups (see Section~\ref{sec_bnbprop_thm}) are E-unitary (cf., \cite[Proposition~3, Chapter~7]{L98}). The following proposition is an improvement of Proposition~\ref{prop_bddgeom}.
    \begin{proposition}\label{prop_euni_bddgeom}
      Let $S$ be an E-unitary inverse semigroup, and let $G(S)$ be its maximal homomorphic image. Then the following are equivalent:
      \begin{enumerate}
        \item \label{prop_euni_bddgeom_s} $\Lambda_S$ is of bounded geometry.
        \item \label{prop_euni_bddgeom_g} The left Cayley graph of $G(S)$ is of bounded geometry
      \end{enumerate}
    \end{proposition}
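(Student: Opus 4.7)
The plan is to prove only the non-trivial direction (\ref{prop_euni_bddgeom_g}) $\Rightarrow$ (\ref{prop_euni_bddgeom_s}), since the converse is already given by Proposition~\ref{prop_bddgeom}. The key observation I will exploit is that E-unitarity forces the restriction of $\sigma\colon S\to G(S)$ to any $\mathcal{L}$-class to be injective. Indeed, if $y, y'$ lie in the same $\mathcal{L}$-class and $\sigma(y)=\sigma(y')$, then by the definition of E-unitary semigroup (as stated in the paper) we must have $y=y'$.

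First I will fix $R>0$ and an arbitrary $x\in S$, and pick $y\in B_R(x)\subset \Lambda_S$. Since edges in $\Lambda_S$ only connect $\mathcal{L}$-related elements, $y$ lies in $L_{x^*x}$ and can be written as $y=k_d\cdots k_1 x$ for some $d\leq R$ and $k_i\in K$. Applying $\sigma$ to this factorization yields $\sigma(y)=\sigma(k_d)\cdots\sigma(k_1)\sigma(x)$, so $\sigma(y)$ lies in the $R$-ball around $\sigma(x)$ in the left Cayley graph of $G(S)$ with respect to $\sigma(K)$ (cf.\ Lemma~\ref{lemma_dist_lclasses}(\ref{dist_lclasses_1})).

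Next I will invoke E-unitarity: since all points of $B_R(x)$ sit in the single $\mathcal{L}$-class $L_{x^*x}$, the map $\sigma$ restricted to $B_R(x)$ is injective into $B_R(\sigma(x))\subset G(S)$. Combining this with the hypothesis that the Cayley graph of $G(S)$ has bounded geometry gives
\begin{equation}
 |B_R(x)| \;\leq\; |B_R(\sigma(x))| \;\leq\; \sup_{g\in G(S)} |B_R(g)| \;<\;\infty, \nonumber
\end{equation}
and this bound is uniform in $x\in S$, which is exactly bounded geometry of $\Lambda_S$.

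I expect no real obstacle: the whole argument rests on the fact that E-unitarity makes $\sigma$ injective on $\mathcal{L}$-classes, which collapses the difficulty present in the proof of Proposition~\ref{prop_bddgeom} (where one had to carefully construct an idempotent $e$ to pull back the local structure of $G(S)$ into an $\mathcal{L}$-class of $S$). The only point requiring some care is checking that the estimate Lemma~\ref{lemma_dist_lclasses}(\ref{dist_lclasses_1}) indeed lands $\sigma(y)$ inside the $R$-ball of $\sigma(x)$ with respect to the induced generating set $\sigma(K)$, but this is immediate from the explicit factorization of $y$ and the fact that $\sigma$ is a semigroup homomorphism.
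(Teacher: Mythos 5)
Your proposal is correct and follows essentially the same route as the paper: cite Proposition~\ref{prop_bddgeom} for one direction, and for the converse use that E-unitarity makes $\sigma$ injective on each $\mathcal{L}$-class, hence on every ball $B_R(x)\subset\Lambda_S$, together with $\sigma(B_R(x))\subset B_R(\sigma(x))$ to transfer the uniform cardinality bound from $G(S)$ to $\Lambda_S$. You merely spell out the injectivity and containment steps that the paper states more tersely.
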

    \begin{proof}
      The implication (\ref{prop_euni_bddgeom_s}) $\Rightarrow$ (\ref{prop_euni_bddgeom_g}) is already proved in Proposition~\ref{prop_bddgeom}. To show the reverse implication
      let $R > 0$ and note that, by hypothesis, the $R$-balls of $G(S)$ have uniformly bounded cardinality, i.e., $\sup_{x \in S} |B_R(\sigma(x))| < \infty$. Since $S$ is E-unitary the canonical projection $\sigma$ gives an injective map from $B_R(x) \subset S$ to $B_R(\sigma(x)) \subset G(S)$ for every $x \in S$ and, thus,
      \begin{equation}
        \sup_{x \in S} \left|B_R\left(x\right)\right| \leq \sup_{x \in S} \left|B_R\left(\sigma\left(x\right)\right)\right| < \infty, \nonumber
      \end{equation}
      which proves that $\Lambda_S$ is of bounded geometry.
    \end{proof}
    Observe Proposition~\ref{prop_euni_bddgeom} (and Proposition~\ref{prop_bddgeom}) actually prove that any upper bound on the cardinality of the $R$-balls of $S$ is also an upper bound on the cardinality of the $R$-balls of $G(S)$. The reverse implication is also true if $S$ is, in addition, E-unitary. Moreover, note that the E-unitary assumption on $S$ is essential in Proposition~\ref{prop_euni_bddgeom}. Indeed, consider $S := G \sqcup \{0\}$, where $0$ denotes a zero element and $G$ is a countably generated group. Then $G(S)$ is trivial (hence of bounded geometry), but the left Sch\"{u}tzenberger graph of the $\mathcal{L}$-class of $1 \in G$ is the left Cayley graph of $G$ and, thus, not of bounded geometry.

    Recall that it was proven in~\cite[Proposition~8.5]{A16} that an E-unitary inverse semigroup $S$ is exact (in the sense that $C_r^*(S)$ is an exact C*-algebra) if and only if $G(S)$ is an exact group. With the techniques presented previously we can give a new proof of this result relating exactness of the C*-algebra to property A of the graph $\Lambda_S$.
    \begin{theorem}\label{thm_euni}
      Let $S = \langle K \rangle$ be an E-unitary countable and discrete inverse semigroup. Suppose $(S, K)$ admits a finite labeling. Then the following are equivalent:
      \begin{enumerate}
        \item \label{thm_euni_a} The graph $\Lambda_S$ has property A.
        \item \label{thm_euni_unif_a} The family of graphs $\{\Lambda_{L_e}\}_{e \in E(S)}$ has uniform property A.
        \item \label{thm_euni_exact} The reduced C*-algebra $C_r^*(S)$ is exact.
        \item \label{thm_euni_g} The maximal homomorphic image $G(S)$ has property A.
      \end{enumerate}
    \end{theorem}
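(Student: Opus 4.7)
The plan is to deduce $(1)\Leftrightarrow(2)\Leftrightarrow(3)$ and $(1)\Rightarrow(4)$ directly from the previously established results, and then to prove the new direction $(4)\Rightarrow(1)$ by a pullback construction that exploits the E-unitary hypothesis. For the first block, $(1)\Leftrightarrow(2)$ is Lemma~\ref{lemma_unif_a}, the equivalence $(1)\Leftrightarrow(3)$ is an instance of Theorem~\ref{invsem_thm_a} (whose FL hypothesis is assumed), and $(1)\Rightarrow(4)$ is Proposition~\ref{prop_sa_gsa}. Note also that Proposition~\ref{prop_euni_bddgeom} ensures that the standing bounded geometry hypothesis is automatic under the E-unitary condition.

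For the new direction I would prove the equivalent statement $(4)\Rightarrow(2)$ by pulling property A functions from $G(S)$ down to each Sch\"utzenberger graph. By the E-unitary hypothesis $\sigma$ is idempotent pure, so its restriction $\sigma|_{L_e}\colon L_e\to G(S)$ is injective on every $\mathcal{L}$-class (as used in the proof of Proposition~\ref{prop_euni_bddgeom}). Given parameters $R,\varepsilon>0$ and a property A function $\xi\colon G(S)\to \ell^1(G(S))_{1,+}$ with support radius $C$, one defines, for each $e\in E(S)$ and each $x\in L_e$,
\[
 \eta_x^e(y) \;:=\; \frac{\xi_{\sigma(x)}(\sigma(y))}{N_x^e}\,, \qquad y\in L_e,
\]
where $N_x^e:=\sum_{y\in L_e}\xi_{\sigma(x)}(\sigma(y))$ is a normalizing factor, and the goal is to check that $\{\eta^e\}_{e\in E(S)}$ realises uniform property A of $\{\Lambda_{L_e}\}_{e\in E(S)}$, which by Lemma~\ref{lemma_unif_a} yields $(1)$.

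The verification splits into three points. First, the support of $\eta_x^e$ lies in $\sigma|_{L_e}^{-1}(B_C^{G(S)}(\sigma(x)))$, so one needs $\sigma|_{L_e}$ to be a quasi-isometric embedding with constants independent of $e$. Second, the norms $N_x^e$ must be uniformly bounded below, which can be arranged by modifying $\xi$ a priori to carry a definite mass at $\sigma(x)$. Third, the estimate $\|\eta_x^e-\eta_y^e\|_1\leq\varepsilon'$ for $d_{L_e}(x,y)\leq R$ should be inherited from the analogous estimate for $\xi$ together with the inequality $d_{G(S)}(\sigma(x),\sigma(y))\leq d_{L_e}(x,y)$ of Lemma~\ref{lemma_dist_lclasses}\,(\ref{dist_lclasses_1}).

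The principal obstacle is the first point: showing uniformly in $e$ that $\sigma|_{L_e}$ is a quasi-isometric embedding. The algebraic input is that idempotent purity forces $\sigma(s)=\sigma(t)$ to produce the common lower bound $(ss^*)t\leq s,t$ in the natural partial order. Given $x,y\in L_e$ with $d_{G(S)}(\sigma(x),\sigma(y))\leq C$, one writes a $G(S)$-geodesic as $\sigma(k_n\cdots k_1)$ in the finite alphabet $K_1\subset K$ furnished by the FL hypothesis, forms $s_0=k_n\cdots k_1\in S$, and uses idempotent purity applied to the pair $(s_0,yx^*)$ to manufacture an element $s\leq s_0$ with $sx=y$; the length of a word realising this $s$ inside $L_e$ is then controlled via Proposition~\ref{prop:fl:newchr}, which provides the finite ``cover'' of idempotent modifiers needed to keep the intermediate vertices inside $L_e$. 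Formalising this distance comparison is the main technical step; once it is available, the three verification points close routinely and yield $(4)\Rightarrow(2)$.
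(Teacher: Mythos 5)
Your reductions of (1)$\Leftrightarrow$(2), (1)$\Leftrightarrow$(3) and (1)$\Rightarrow$(4) to Lemma~\ref{lemma_unif_a}, Theorem~\ref{invsem_thm_a} and Proposition~\ref{prop_sa_gsa} agree with the paper. The problem is your closing implication (4)$\Rightarrow$(2). Its entire weight rests on the claim that the restrictions $\sigma|_{L_e} \colon \Lambda_{L_e} \to G(S)$ are quasi-isometric (or at least coarse) embeddings, uniformly in $e$: without this, the sets $\sigma|_{L_e}^{-1}\big(B_C(\sigma(x))\big)$ carrying your functions $\eta^e_x$ need not lie in balls of uniformly bounded radius of $\Lambda_{L_e}$, so the support condition in Definition~\ref{def_a} fails. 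That claim is false in general: as recorded in Remark~\ref{rem_sigma_not_qi}, the projection $\sigma$ need not coarsely embed any Sch\"{u}tzenberger graph of $S$ into $G(S)$, even for finitely generated (hence FL) E-unitary $S$. Injectivity of $\sigma|_{L_e}$ (which is what E-unitarity gives, and what Proposition~\ref{prop_euni_bddgeom} uses) yields no control of $d_{L_e}(x,y)$ in terms of $d_{G(S)}(\sigma(x),\sigma(y))$; an $\mathcal{L}$-class can sit inside the Cayley graph of $G(S)$ like a long spiral, with intrinsically distant points having nearby $\sigma$-images. Your sketched repair does not close this: from $\sigma(s_0)=\sigma(yx^*)$ with $s_0$ a bounded $K_1$-word, E-unitarity only helps if $x \in D_{s_0^*s_0}$ (then indeed $s_0x=y$), but nothing forces $x$ to lie in the domain of \emph{any} bounded-length word with the correct image --- that is exactly what fails --- and Proposition~\ref{prop:fl:newchr} only covers elements $s$ already known to satisfy $d(s^*s,s)\leq R$, so it cannot supply the missing bound. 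Indeed, whether any direct coarse-geometric transfer of property A from $G(S)$ to the Sch\"{u}tzenberger graphs exists is posed as an open question immediately after the theorem.

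The paper closes the cycle C*-algebraically instead: it proves (4)$\Rightarrow$(3) by invoking Paterson's identification $C_r^*(S) \cong C_0(X) \rtimes_r G(S)$ with $X = G_\mathcal{U}(S)^{(0)}$ (for E-unitary $S$ the universal groupoid is the transformation groupoid of $G(S)$ acting on the filter space of $E(S)$), so that property A of the group $G(S)$, i.e.\ its exactness, passes to the crossed product by the same c.c.p.\ factorization arguments used in Theorems~\ref{invsem_thm_l_a} and~\ref{invsem_thm_a}; the already-established equivalence (3)$\Leftrightarrow$(1), which is where the FL hypothesis enters, then finishes the proof. To salvage your route you would have to replace the pullback along $\sigma$ by this crossed-product argument, or else produce a genuinely new geometric idea answering the paper's open question.
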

    \begin{proof}
      The equivalences (\ref{thm_euni_a}) $\Leftrightarrow$ (\ref{thm_euni_unif_a}) and (\ref{thm_euni_a}) $\Leftrightarrow$ (\ref{thm_euni_exact}) were proven in Lemma~\ref{lemma_unif_a} and Theorem~\ref{invsem_thm_a}, respectively. Moreover, the implication (\ref{thm_euni_a}) $\Rightarrow$ (\ref{thm_euni_g}) is proved in Proposition~\ref{prop_sa_gsa}, and, thus, it suffices to show (\ref{thm_euni_g}) $\Rightarrow$ (\ref{thm_euni_exact}). Moreover, from \cite[Theorem~4.4.2]{P12} we have that $C_r^*(S) = C_0(X) \rtimes_r G(S)$, where $X = G_\mathcal{U}(S)^{(0)}$ is the unit space of the universal groupoid. Indeed, it is routine to show that if $S$ is E-unitary then $G_\mathcal{U}(S)$ is actually the transformation groupoid given by $G(S)$ acting on the space of filters of $E(S)$. Thus, following similar reasonings as in the proof of the same implication of Theorems~\ref{invsem_thm_l_a} and \ref{invsem_thm_a}, it follows that if $G(S)$ is 
      exact then so is $C_r^*(S)$.
    \end{proof}

    We conclude the article with a remark and a question in relation to Theorem~\ref{thm_euni}.
    \begin{remark}\label{rem_sigma_not_qi}
      The proof of the preceding theorem shows that the property A of $\Lambda_S$ and that of $G(S)$ are strongly related in the E-unitary case. However, the proof is indirect (in the sense that it is based on C*-properties). Moreover, it is well known (to experts) that, in general, the canonical projection $\sigma \colon S \rightarrow G(S)$ needs not coarsely embed any Sch\"{u}tzenberger graph of $S$ into $G(S)$, even in the finitely generated and E-unitary case. This suggests the following question.
      \begin{itemize}
       \item[\textbf{Q:}] Is there a direct (coarse) geometric technique translating property A of the maximal homomorphic image $G(S)$ into property A of any Sch\"{u}tzenberger graph of $S$ in the E-unitary case?
      \end{itemize}
    \end{remark}

  \providecommand{\bysame}{\leavevmode\hbox to3em{\hrulefill}\thinspace}
  
\end{document}